\theoremstyle{definition}
\newtheorem* {theorem*}{Theorem}
\newtheorem{theorem}{Theorem}[section]
\theoremstyle{definition}
\newtheorem{observation}[theorem]{Observation}
\newtheorem* {example*}{Example}
\newtheorem{lemma}[theorem]{Lemma}
\theoremstyle{definition}
\newtheorem{definition}[theorem]{Definition}
\theoremstyle{definition}
\newtheorem* {notation}{Notation}
\newtheorem{conjecture}[theorem]{Conjecture}
\newtheorem{proposition}[theorem]{Proposition}
\newtheorem{corollary}[theorem]{Corollary}
\newtheorem* {remark}{Remark}
\theoremstyle{definition}
\newtheorem {example}[theorem]{Example}
\theoremstyle{definition}
\theoremstyle{definition}
\theoremstyle{definition}
\newtheorem* {remarks}{Remarks}
\numberwithin{equation}{section}
\def\modu{\ (\mathrm{mod}\ }
\def\({\left(}
\def\){\right)}
     \newcommand{\CC}{\mathbb{C}}  
   \newcommand{\RR}{\mathbb{R}}
  \newcommand{\QQ}{\mathbb{Q}}   \newcommand{\cP}{\mathcal{P}}
\def\NN{\mathbb{N}}
\def\PP{\mathbb{P}}
    \def\ZZ{\mathbb{Z}}   \def\H{\mathcal{H}}  \def\GL{\mathrm{GL}}       \def\spanning{\textnormal{-span}}   
\def\Irr{\mathrm{Irr}}  \def\wt{\widetilde}
\def\diag{\mathrm{diag}}
\newcommand{\One}{{1\hspace{-.15cm} 1}}
\newcommand{\sgn}{\mathrm{sgn}}
\newcommand{\PSL}{\mathrm{PSL}}
\def\barr{\begin{array}}
\def\earr{\end{array}}
\def\ba{\begin{aligned}}
\def\ea{\end{aligned}}
\def\be{\begin{equation}}
\def\ee{\end{equation}}
\def\cF{\mathrm{F}}
\def\Uch{\mathrm{Uch}}
\renewcommand{\@makefnmark}{\mbox{\textsuperscript{}}}
\begin{document}
\title{How to compute the Frobenius-Schur indicator of a  unipotent character of a finite Coxeter system}
\author{Eric Marberg
 \\ Department of Mathematics \\ Massachusetts Institute of Technology \\ \tt{emarberg@math.mit.edu}
}
\date{}

\def\ds{\displaystyle}
\def\ben{\begin{enumerate}}
\def\een{\end{enumerate}}
\def\Des{\mathrm{Des}_{\mathrm{R}}}
\newcommand{\Invol}[1]{\mathrm{Invol}(#1)}
\def\LV{\varrho_{\mathrm{LV}}}
\def\APR{\varrho_{\mathrm{APR}}}
\def\omdef{\overset{\mathrm{def}}}
\def\UIrr{\mathrm{Uch}}
\def\FT{\textbf{M}}
\def\cF{\mathcal{F}}
\def\sM{\mathscr{M}}
\def\fsd{\epsilon}
\def\FD{\mathrm{Fus}}
\def\DFD{\mathrm{DihFus}}
\def\wtFD{\mathrm{ExcFus}}

\def\qquand{\qquad\text{and}\qquad}
\def\quand{\quad\text{and}\quad}

\maketitle

\def\FakeDeg{\mathrm{FakeDeg}}
\def\Deg{\mathrm{Deg}}
\def\Eig{\mathrm{Eig}}

\setcounter{tocdepth}{2}

\begin{abstract}
For each finite, irreducible Coxeter system $(W,S)$, Lusztig has  associated a set of ``unipotent characters'' $\Uch(W)$. There is also a notion of  a ``Fourier transform'' on the space of functions $\Uch(W) \to \RR$, due to Lusztig for Weyl groups and to Brou\'e, Lusztig, and Malle in the remaining cases.
This paper concerns a certain $W$-representation $\varrho_{W}$  in the vector space generated by the  involutions of $W$. Our main result is to show that the 
irreducible multiplicities of $\varrho_W$ are given by the Fourier transform of a unique function $\fsd : \Uch(W) \to \{-1,0,1\}$, which for various reasons serves naturally as a  heuristic definition of  the Frobenius-Schur indicator on $\Uch(W)$. The formula we obtain for $\fsd$ extends prior work of Casselman, Kottwitz, Lusztig, and Vogan addressing the case in which $W$ is a Weyl group. We include in addition a succinct  description of the irreducible decomposition of $\varrho_W$ derived by Kottwitz when $(W,S)$ is classical, and prove that $\varrho_{W}$ defines a Gelfand model  if and only if $(W,S)$ has type $A_n$, $H_3$, or $I_2(m)$ with $m$ odd. We  show finally  that a conjecture of Kottwitz connecting the decomposition of $\varrho_W$ to the left cells of $W$ holds in all non-crystallographic types, and observe that a weaker form of Kottwitz's conjecture holds in general. In giving these results, we  carefully survey the construction and notable properties of the set $\Uch(W)$ and its attached Fourier transform.
 \end{abstract}

\tableofcontents

\section{Introduction}

Each finite, irreducible Coxeter system $(W,S)$ possesses a  set of ``unipotent characters'' $\Uch(W)$, introduced by Lusztig in \cite{L_app}.
When $(W,S)$ is crystallographic,
  $\Uch(W)$ arises from Lusztig's set of ``unipotent representations'' of a
  corresponding finite reductive group; Lusztig's result that this set
  depends only on $(W,S)$ (and not on the root datum) was a primary
  motivation for the definition.
  %
 By construction,
$\Uch(W)$ always contains as a subset the set $\Irr(W)$ of complex irreducible characters   of the Coxeter group $W$. However, we typically view the elements   $\Phi \in \Uch(W)$ not as characters but simply as formal objects with
 three defining attributes:
\begin{enumerate}

\item[$\bullet$] A polynomial $\FakeDeg(\Phi) \in \NN[x]$ with nonnegative integer coefficients, called the \emph{fake degree}.  

\item[$\bullet$] A nonzero polynomial $\Deg(\Phi) \in \RR[x]$ with real coefficients, called the \emph{(generic) degree}.

\item[$\bullet$] A root of unity $\Eig(\Phi) \in \CC^\times$, called the \emph{Frobenius eigenvalue}.

\end{enumerate}
It takes some care to   adequately describe $\Uch(W)$ for all finite, irreducible Coxeter systems $(W,S)$, and this description is not so well-known as that of, say, $\Irr(W)$.  Section \ref{uch-sect} below  supplies these  missing details, which for the moment we can work without.



 This paper concerns an interesting way of making sense of the question:  is there a well-defined notion of a Frobenius-Schur indicator for a ``unipotent character'' $\Phi \in \Uch(W)$? 
 Many of the results and definitions involved in telling this story can be found scattered throughout the literature, and one should thus regard this paper as  about 75\% expository$-$with the remaining 25\% dedicated in large measure to extending various results for Weyl groups to arbitrary  finite Coxeter systems.
 
Recall that if $G$ is a finite group, then the Frobenius-Schur indicator of an irreducible character $\Phi \in \Irr(G)$ is the number 
   \be\label{fsd} \fsd(\Phi) \omdef = \begin{cases} 1,&\text{if $\Phi$ is the character of a representation of $G$ in a real vector space}, \\
0,&\text{if the values of $\Phi$ are not all real}, \\ 
-1,&\text{otherwise, in which case $\Phi$ is called {quaternionic}}.
\end{cases}\ee 
Alternatively, $\fsd(\Phi)$ is also the average value of $\Phi(g^2)$ over  $g \in G$ (as is explained, for example, in the proof of \cite[Theorem 23.14]{JL}).
Since $\Uch(W)$ consists of formal objects and not characters, one cannot apply this definition directly, and it is not at all obvious what kind of definition could serve as an appropriate substitute.

If $(W,S)$ is crystallographic, there is an easy way of circumventing this difficulty.
In this case  the elements of $\Uch(W)$  are in bijection with the  unipotent characters of various finite reductive groups having $W$ as Weyl group. While not clear \emph{a priori}, it turns out that all of the actual  characters corresponding to a given
 $\Phi \in \Uch(W)$ have the same indicator$-$an observation of Lusztig \cite{LRational} which we state precisely as Proposition \ref{classical-fsd} below. 
 There is thus a logical definition of  $\fsd$ on $\Uch(W)$ when $(W,S)$ is crystallographic, which happens to have the following simple description: define $\fsd(\Phi)$ to be 1 if $\Eig(\Phi)$ is real and  $0$ otherwise.

Things become  more interesting in the case when $(W,S)$ is non-crystallographic. In this situation, to describe the Frobenius-Schur indicator in a satisfactory way, we require
 a heuristic definition 
 which is consistent with the crystallographic case but which makes sense for all types.  
A series of papers appearing in the past decade, beginning with 
 Kottwitz \cite{Kottwitz} and Casselman \cite{Casselman} and proceeding through Lusztig and Vogan \cite{LV}, suggests  such a definition, surprisingly, in terms of the irreducible multiplicities of a certain $W$-representation.
  As we will see in a moment, this approach unexpectedly leads to an extension of the Frobenius-Schur indicator to the non-crystallographic case which
which forces us to assign an indicator of $-1$ to some elements of
  $\Uch(W)$; that is, which suggests the existence of quaternionic
  ``unipotent characters.''
  
  The  $W$-representation of interest is given as follows. It is interesting to note that Adin, Postnikov, and Roichman 
 studied exactly this representation in type $A_n$ in their paper \cite{APR}. 
    
  \begin{definition}\label{intro-def} Given a finite Coxeter system $(W,S)$ with length function $\ell : W \to \NN$, let 
  \[ \Invol{W} = \QQ\spanning\{ a_w : w \in W \text{ such that }w^2=1\}\]
be a vector space with a basis indexed by the involutions in $W$, and define $\varrho_{W} : S \to \GL(\Invol{W})$ by the formula 
\[ \varrho_{W}(s) a_w 
= \begin{cases} 
-a_w,&\text{if $sw=ws$ and $\ell(ws) < \ell(w)$}, \\
a_{sws},&\text{otherwise,}
\end{cases}\qquad\text{for $s \in S$ and $w \in W$ with $w^2=1$.}\]
The map $\varrho_W$   extends to a representation of $W$ 
(a nontrivial fact, whose derivation from results of Lusztig and Vogan \cite{LV,LV2} will be explained in Section \ref{prop2.1-sect} below), and each conjugacy class of involutions in $W$
spans a $\varrho_W$-invariant subspace in $\Invol{W}$. Let $\varrho_{W,\sigma}$ denote this subrepresentation  on the space spanned by the conjugacy class of the involution $\sigma \in W$.
\end{definition}

\begin{notation} We write $\chi_W$ and $\chi_{W,\sigma}$ (when $\sigma \in W$ and $\sigma^2=1$) for the characters of $\varrho_W$ and $\varrho_{W,\sigma}$.
\end{notation}

Kottwitz \cite{Kottwitz} found a  formula for the multiplicities of the irreducible constituents of $\varrho_W$ in the case that $(W,S)$ is a Weyl group, in terms of  Lusztig's ``non-abelian Fourier transform'' \cite{L}.   (More precisely, Kottwitz computed the irreducible constituents of certain representations induced from the centralizers of involutions in $W$. The sum of these induced representations is isomorphic to $\varrho_W$, although this is not an obvious fact; see \cite[Remark 2.2]{GeckMalle_new} for a detailed explanation.)
Kottwitz proved  this formula in the classical cases, while Casselman \cite{Casselman} carried out the calculations necessary to check it in the exceptional ones. 
In more recent work, Lusztig and Vogan re-encountered  $\varrho_W$ as a specialization of a certain Hecke algebra representation, and noted a way of restating Kottwitz's results to involve the Frobenius-Schur indicator \cite[\S6.4]{LV}.  

Our main object is to describe how this last formulation extends even to the case when $(W,S)$ is non-crystallographic. 
 To this end, 
  we must briefly introduce the \emph{Fourier transform matrix} of $\Uch(W)$.  
For any finite, irreducible Coxeter system $(W,S)$, this is a real symmetric matrix $\FT$, with rows and columns  indexed by $\Uch(W)$, which possesses  the following distinguishing properties (among others; see \cite[Theorem 6.9]{GeckMalle} and also Section \ref{ft-last-sect} below): 
\begin{enumerate}
\item[(P1)] $\FT$ transforms the vector of fake degrees of $\Uch(W)$ to the vector of (generic) degrees, permuted by a certain  involution  (see Theorem \ref{p1} below).

\item[(P2)] $\FT$ is block diagonal with respect to the division of $\Uch(W)$ into families (see Section \ref{family-sect}).

\item[(P3)] $\FT$ fixes each of the vectors indexed by $\Uch(W)$ whose entries are the irreducible multiplicities, extended by zeros, of the left cell representations of $W$ (see Theorem \ref{ft-cells-thm} below).

\item[(P4)] $\FT$ and the diagonal matrix of Frobenius eigenvalues of $\Uch(W)$  determine  a representation of the modular group $\PSL_2(\ZZ)$ (see Theorem \ref{p4} below); in particular, $\FT^2=1$.


\end{enumerate}
Section \ref{FT-sect} below provides, with accompanying references, a careful description of 
the matrix $\FT$ attached to each finite, irreducible Coxeter system.
While the literature often tends to present these matrices as a somewhat heuristic construction,  work in
preparation of Brou\'e, Malle, and Michel shows that when $W$ is a primitive (complex) reflection group, $\FT$ 
 is uniquely determined
under a suitable set of natural axioms  \cite{PC}.

As a final preliminary, we  recall that  
an element $\Phi \in \Uch(W)$ is \emph{special} if the largest powers of $x$ dividing the polynomials $\FakeDeg(\Phi) \in \NN[x]$ and $\Deg(\Phi) \in \RR[x]$ are equal.
Every special $\Phi \in \Uch(W)$  belongs to the subset $\Irr(W)$. Such characters play an important role in the theory of unipotent characters for finite reductive groups and have been classified by Lusztig \cite[Chapter 4]{L}.  We mention also that Proposition \ref{special-prop} below  provides a description of the special irreducible characters of classical Coxeter systems in terms of their usual combinatorial indexing sets.

The Fourier transform $\FT$  acts on  functions $f : \Uch(W) \to \CC$ by matrix multiplication, since we may view $f$ as a  vector whose entries are indexed by $\Uch(W)$.  
The following theorem, which is our main result, extends \cite[Theorem 1]{Kottwitz} and \cite[\S6.4]{LV} to define  a function on $\Uch(W)$ which we might naturally view as the Frobenius-Schur indicator. The proof of this result appears  at the end of Section \ref{FT-sect}.

\begin{theorem}\label{main-thm} Suppose $(W,S)$ is a finite, irreducible Coxeter system with associated Fourier transform matrix $\FT$.  There then exists a \emph{unique} function $\epsilon : \Uch(W)  \to \RR$ such  that 
\begin{enumerate}
\item[(1)] $\epsilon(\Phi) \in \{-1,0,1\}$ for all $\Phi \in \Uch(W)$.

\item[(2)] $\epsilon(\Phi) = 0$ if and only if 
the Frobenius eigenvalue of $\Phi \in \Uch(W)$ is not real.

\item[(3)] 
$(\FT \epsilon)(\Phi)$ is the multiplicity of $\Phi$ in $\chi_W$ 
 for each special  $\Phi \in\Irr(W)\subset \Uch(W)$.

\end{enumerate}
For this function $\epsilon$, it  in fact holds that 
\be\label{all-note} \chi_W = \sum_{\psi \in \Irr(W)} (\FT\epsilon)(\psi)\psi.\ee
Furthermore, if  $(W,S)$ is  crystallographic, then $\epsilon$ coincides with the Frobenius-Schur indicator on $\Uch(W)$; i.e., $\epsilon(\Phi)$ is 1 or 0 according to whether $\Eig(\Phi)$ is real or non-real for $\Phi \in \Uch(W)$.


\end{theorem}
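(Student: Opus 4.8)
The plan is to establish uniqueness first, then existence, then compatibility with the crystallographic case, using the four axioms (P1)--(P4) of $\FT$ together with the cell-representation input (P3) and Kottwitz's formula. For \emph{uniqueness}: suppose $\epsilon$ and $\epsilon'$ both satisfy (1)--(3). By (P4) we have $\FT^2 = 1$, so $\FT$ is invertible; hence condition (3) together with (2) pins down $\FT\epsilon$ on the special characters and $\epsilon$ on the non-special-or-non-real part, and I expect that knowing $\FT\epsilon$ agrees with $\FT\epsilon'$ on \emph{all} of $\Irr(W)$ (not just special $\Phi$) would force $\epsilon = \epsilon'$ upon applying $\FT$. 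The subtlety is that (3) only constrains $(\FT\epsilon)(\Phi)$ for \emph{special} $\Phi$, so I need to argue that the remaining freedom is killed by (2) (which fixes $\epsilon$ to be $0$ off the real-eigenvalue locus) combined with (P2) (block-diagonality over families): within each family there is exactly one special character, and the values of $\epsilon$ on the real-eigenvalue members of that family are then determined by the single linear equation coming from (3) together with the known shape of $\FT$ restricted to the family. This requires a case-by-case verification over the families — small families ($1$, $\ZZ/2$, $S_3$, $S_4$, $S_5$, and the dihedral/exceptional families) — which I expect to be the main obstacle, since one must check in each family type that the system is consistent and uniquely solvable with values in $\{-1,0,1\}$.

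For \emph{existence}: define $\epsilon$ on each family by the recipe from (2) — zero off the real-eigenvalue locus — and on the real-eigenvalue members by solving the linear constraint (3); the content is to show a solution exists with entries in $\{-1,0,1\}$. Here I would invoke the already-available decomposition of $\chi_W = \chi_{W}$ into irreducibles: Kottwitz's formula in the classical types and Casselman's exceptional computations give the multiplicity of each $\psi \in \Irr(W)$ in $\chi_W$ explicitly, and the content of \eqref{all-note} is that the vector of these multiplicities, extended by $0$ to all of $\Uch(W)$, lies in the image of $\FT$ applied to a $\{-1,0,1\}$-valued function supported on the real-eigenvalue locus. Concretely I would write $\epsilon := \FT m$ where $m$ is the multiplicity vector of $\chi_W$ restricted to $\Irr(W)$ and zero-padded, use $\FT^2 = 1$ to get $\FT\epsilon = m$ (which handles (3) and even the stronger \eqref{all-note}), and then the real work is verifying (1) and (2) for this $\epsilon$ family-by-family — i.e., that $\FT m$ has the claimed sign pattern and vanishes exactly off the real-eigenvalue set. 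This last verification is again a finite check across family types, leaning on the explicit small Fourier matrices tabulated in Section~\ref{FT-sect} and on the explicit multiplicities.

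For the \emph{crystallographic compatibility} clause: once $\epsilon$ is the unique function of parts (1)--(3), I must show it agrees with the honest Frobenius-Schur indicator on $\Uch(W)$, which (by the remarks preceding the theorem, citing Lusztig \cite{LRational} / Proposition~\ref{classical-fsd}) is the function sending $\Phi$ to $1$ if $\Eig(\Phi)$ is real and $0$ otherwise. Call that function $\epsilon_0$. It already satisfies (1) and (2) by construction. So by uniqueness it suffices to check that $\epsilon_0$ satisfies (3), i.e. that $(\FT\epsilon_0)(\Phi)$ equals the multiplicity of $\Phi$ in $\chi_W$ for special $\Phi$; and in fact I would check the stronger \eqref{all-note} for $\epsilon_0$ directly. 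This is exactly the statement that Kottwitz's multiplicity formula, reinterpreted à la Lusztig--Vogan \cite[\S6.4]{LV}, expresses $\chi_W$ as $\FT$ applied to the real-eigenvalue indicator; so this step is essentially a citation plus bookkeeping, with no genuinely new difficulty beyond matching conventions (in particular the involution permutation in (P1) and the precise normalization of $\FT$). The one place to be careful is that $\epsilon_0$ in the crystallographic case never takes the value $-1$, whereas the \emph{unique} $\epsilon$ of the theorem does take the value $-1$ in some non-crystallographic types ($H_4$, $I_2(m)$ with $m$ even, etc.); this is consistent because the compatibility assertion is only claimed when $(W,S)$ is crystallographic, and is precisely the phenomenon ("quaternionic unipotent characters") advertised in the introduction.
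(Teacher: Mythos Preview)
Your existence argument has a genuine gap. You propose to set $\epsilon := \FT m$, where $m$ is the vector of multiplicities of $\chi_W$ in $\Irr(W)$, zero-padded to all of $\Uch(W)$. This would give $\FT\epsilon = m$, i.e.\ $(\FT\epsilon)(\Phi)=0$ for every $\Phi \in \Uch(W)\setminus\Irr(W)$. But that is false: as the paper records in Proposition~\ref{main-cor}(c), in each of types $F_4$, $E_8$, $H_4$ there is a $\Phi \in \Uch(W)\setminus\Irr(W)$ with $(\FT\epsilon)(\Phi)\neq 0$. So the correct $\epsilon$ is \emph{not} $\FT m$, and the verification you plan (``check that $\FT m$ has the claimed sign pattern'') would simply fail in those types. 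The statement \eqref{all-note} only asserts that $(\FT\epsilon)|_{\Irr(W)}$ recovers the multiplicity vector; it says nothing about $(\FT\epsilon)|_{\Uch(W)\setminus\Irr(W)}$. The paper's approach is the reverse of yours: it writes down a candidate $\epsilon$ directly (identically $1$ in classical types; explicit formulas in $I_2(m)$; computed values in the exceptional and non-crystallographic types) and then verifies $(\FT\epsilon)|_{\Irr(W)}$ against the known decomposition of $\chi_W$.

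Your uniqueness argument is also incomplete as stated. You observe that each family contributes a single linear equation from condition~(3), but one linear equation does not by itself pin down several $\{\pm 1\}$-valued unknowns. The paper's mechanism for uniqueness is the \emph{positivity axiom} of the fusion datum (Theorem~\ref{p4}): the row of $\FT$ indexed by the special element $\Phi_0$ has strictly positive entries, so $(\FT\epsilon)(\Phi_0)$ is a positive linear combination of the values $\epsilon(\Phi)$. When the correct $\epsilon$ is identically $1$ on the real-eigenvalue locus (classical, $I_2(m)$, and all exceptional types except $H_4$), this value is the strict maximum over functions satisfying (1)--(2), which forces uniqueness immediately. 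Only in type $H_4$, where two values of $\epsilon$ equal $-1$, does one need a short additional check. (Incidentally, your remark that $\epsilon$ takes the value $-1$ in type $I_2(m)$ with $m$ even is incorrect; by Proposition~\ref{main-cor}(b) this happens only in type $H_4$.)
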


Conditions (1) and (2) are   basic properties one would desire of a prospective Frobenius-Schur indicator. In particular  we can restate (2) 
as the requirement that $\epsilon(\Phi) = 0$ if and only if $\Phi$ is not fixed by an operator on $\Uch(W)$ which we reasonably view as complex conjugation  (see Proposition \ref{cmplx-prop} below).  
The simplicity of condition (3) and the surprising fact that it implies (\ref{all-note})
make the function defined in the theorem an attractive  extension of the Frobenius-Schur indicator to the non-crystallographic case.  Admittedly, this is not  evidence that $\epsilon$ is the ``right'' extension, only that it is a suitable and interesting one.  


This Frobenius-Schur indicator gives rise to two quaternionic ``unipotent characters'' in type $H_4$. We record this phenomenon and some other  properties of $\fsd$ which become evident in the proof of Theorem \ref{main-thm} 
in the following proposition. 
\begin{proposition}\label{main-cor}
Let $\epsilon : \Uch(W) \to \{-1,0,1\}$ be the function defined in Theorem \ref{main-thm}.

\begin{enumerate}

\item[(a)] $\fsd(\Phi) = 1$ for all $\Phi \in \Irr(W)$. If $(W,S)$ is classical, then $\fsd(\Phi) =1$ for all $\Phi \in \Uch(W)$.

\item[(b)] $\epsilon(\Phi) = -1$ if and only if $(W,S)$ is of type $H_4$ and  $\Phi $ is either of  two elements of $\Uch(W)$ with \[\Deg(\Phi) = \tfrac{1}{60} x^6 + \text{higher powers of $x$}.\]


\item[(c)] $(\FT \epsilon)(\Phi)$ is a nonnegative integer for all $\Phi \in \Uch(W)$. If $(W,S)$ is  classical, then $(\FT \epsilon)(\Phi)$ is nonzero if and only if $\Phi$ is special. More generally,
  $(\FT \epsilon)(\Phi)$ is nonzero only if $\Phi \in \Irr(W)$ or  if  $\Phi$ a single element of $\Uch(W)\setminus \Irr(W)$ in each of the types $F_4$,  $E_8$, and  $H_4$.
\end{enumerate}

\end{proposition}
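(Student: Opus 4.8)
The plan is to read off all three statements from the explicit determination of $\epsilon$ carried out in the proof of Theorem~\ref{main-thm}, together with the combinatorial description of $\chi_W$ in the classical types and the tabulated Fourier transform matrices in the exceptional types. For part~(a), I would argue as follows. Since every $\psi \in \Irr(W)$ has $\Eig(\psi)=1$ (the Frobenius eigenvalue attached to an honest irreducible character of $W$ is trivial), condition~(2) of Theorem~\ref{main-thm} already forces $\epsilon(\psi)\ne 0$, so $\epsilon(\psi)\in\{-1,1\}$; the claim is that it is always $+1$. For the classical types one in fact wants the stronger statement that $\epsilon(\Phi)=1$ for \emph{all} $\Phi\in\Uch(W)$, which reduces to checking that no element of $\Uch(W)$ of type $A_n$, $B_n$, or $D_n$ has a non-real Frobenius eigenvalue and that $\epsilon$ is never $-1$ there; the first is visible from the classification of $\Uch(W)$ in Section~\ref{uch-sect}, and the second will follow once part~(b) is established. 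For the irreducible characters in the exceptional types $E_6,E_7,E_8,F_4,G_2,H_3,H_4,I_2(m)$, one invokes the fact that the value of $\epsilon$ on $\Irr(W)$ is pinned down by condition~(3) on special characters and by (\ref{all-note}) together with the known decomposition of $\chi_W$: since $\chi_W$ is a genuine (non-virtual) character with nonnegative multiplicities, and since $\FT$ is an involution fixing the cell vectors (properties (P3), (P4)), the solution $\epsilon$ produced in the proof of Theorem~\ref{main-thm} has all its $\Irr(W)$-entries equal to $+1$ by inspection of the relevant tables.

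For part~(b), the strategy is a direct case check guided by condition~(2): $\epsilon(\Phi)=-1$ is only possible when $\Eig(\Phi)$ is real, i.e. $\Eig(\Phi)=\pm 1$, and $\Phi\notin\Irr(W)$ would be needed in the classical case (already excluded) while in each exceptional type one runs through the finitely many families, solves the linear system imposed by conditions~(1)--(3) block by block (property~(P2) makes this a family-by-family computation), and records where a $-1$ is forced. The claim is that this happens exactly in type $H_4$, for the two members of the relevant non-singleton family whose generic degree begins $\tfrac{1}{60}x^6$. Concretely, in that family $\FT$ is a small explicit symmetric involution, the special character's multiplicity in $\chi_W$ is dictated by the Kottwitz-type decomposition of $\chi_W$ in type $H_4$ (which I would either quote from Casselman's exceptional computations or recompute from the character table of $\varrho_{H_4}$), and the unique $\pm1,0$-vector $\epsilon$ with the prescribed zero-pattern and the prescribed image under $\FT$ turns out to have two entries equal to $-1$; in every other family of every other type the analogous unique solution has no $-1$.

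For part~(c), nonnegativity and integrality of $(\FT\epsilon)(\Phi)$ for all $\Phi$ is exactly equation~(\ref{all-note}) read coordinate-wise: $(\FT\epsilon)(\psi)$ is the multiplicity of $\psi$ in the genuine character $\chi_W$ when $\psi\in\Irr(W)$, hence a nonnegative integer, and $(\FT\epsilon)(\Phi)=0$ for $\Phi\notin\Irr(W)$ — wait, that last assertion is precisely what needs proof and is false in general, so instead I would argue that $\FT\epsilon$, being $\FT$ applied to a $\{-1,0,1\}$-vector supported where $\Eig$ is real, lands in $\ZZ_{\ge 0}$ by the same family-by-family inspection used in~(b); the entries of $\FT$ within a family are rational with controlled denominators and the specific combinations arising are checked to be nonnegative integers. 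For the classical refinement, I would combine part~(a) ($\epsilon\equiv 1$ on $\Uch(W)$) with property~(P1): for classical $W$ the vector $\epsilon$ is the all-ones vector on each family, $\FT$ sends it to a $0/1$ vector supported exactly on the special characters (this is essentially the statement that the special character is the unique one in its family fixed by $\FT$ together with the ``all-ones'' combinatorics of the classical Fourier matrices), giving $(\FT\epsilon)(\Phi)\ne 0 \iff \Phi$ special. For the remaining exceptional claim one goes back to the family-by-family solutions computed in~(b) and simply records, for $F_4$, $E_8$, and $H_4$, the one non-$\Irr(W)$ element receiving a nonzero value, checking in all other types that $\FT\epsilon$ is supported on $\Irr(W)$.

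The main obstacle is the exceptional-type bookkeeping concentrated in parts~(b) and~(c): one must have in hand the explicit Fourier transform matrices from Section~\ref{FT-sect}, the explicit list of Frobenius eigenvalues, and the explicit decomposition of $\chi_W$ (from Kottwitz/Casselman, or recomputed) in types $F_4$, $E_6$, $E_7$, $E_8$, $H_3$, $H_4$, and then solve and inspect the resulting small linear systems family by family. Everything else — parts~(a) and the general nonnegativity in~(c) — is a formal consequence of Theorem~\ref{main-thm}, property~(P1), and the fact that $\chi_W$ is an honest character.
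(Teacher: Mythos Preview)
Your approach is essentially the paper's: the paper does not give a separate proof of Proposition~\ref{main-cor} but presents it as properties that ``become evident'' from the explicit, type-by-type determination of $\epsilon$ carried out in the proof of Theorem~\ref{main-thm}, together with the decompositions of $\chi_W$ in Theorem~\ref{thm2}, Propositions~\ref{h3-tbl}, \ref{h4-tbl}, \ref{i2-prop}, and the tables in~\cite{Casselman}. Your identification of the exceptional-type bookkeeping as the real content is exactly right.

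Two imprecisions in your treatment of part~(c) should be corrected. First, property~(P1) concerns fake versus generic degrees and is irrelevant to the classical refinement; drop it. Second, for a classical family parametrized by $\sM\bigl((\ZZ/2\ZZ)^k\bigr)$ the all-ones vector is sent by $\FT_{(\ZZ/2\ZZ)^k}$ not to a $0/1$ vector but to $2^k$ times the indicator of the special element $(1,\One)$: this is a direct row-sum computation (only the row indexed by $(1,\One)$ has nonzero sum), and it agrees with Theorem~\ref{thm2}, where the multiplicities in $\chi_W$ are $2^{d(\alpha,\beta)}$ or $2^{e(\alpha,\beta)-1}$ in types $BC_n$ and $D_n$. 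Your conclusion that $\FT\epsilon$ is supported exactly on the special characters survives unchanged. Finally, your closing sentence overstates what is formal: nonnegativity and integrality of $(\FT\epsilon)(\Phi)$ for $\Phi\in\Uch(W)\setminus\Irr(W)$ does \emph{not} follow from~(\ref{all-note}) or from $\chi_W$ being an honest character, and genuinely requires the direct family-by-family computation you correctly invoked a few lines earlier.
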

\begin{remark}
A lengthy computation shows that in type $H_4$,  there is no symmetric matrix $\FT$ satisfying  (P1) and (P2) for which there exists a function $\epsilon : \Uch(W) \to \{ 0,1\}$ such that (\ref{all-note}) holds.  Thus, in this sense even a different choice of Fourier transform matrix in type $H_4$ still leads to quaternionic unipotent characters.
\end{remark}


It is an interesting open problem to describe how much of the preceding theory extends from Coxeter systems to complex reflection groups. For some but not all such groups, there are analogous notions of ``unipotent characters'' and Fourier transforms (see \cite{spets}), for which  one expects  some meaningful  generalization of Theorem \ref{main-thm} to hold.

Independent of any connection to unipotent characters, the representation $\varrho_W$ 
is itself an interesting thing to study (e.g., see \cite{APR} which analyzes $\varrho_W$ in type $A_n$).
When $W$ is classical  of type $A_n$, $BC_n$, or $D_n$,   it is a natural problem to describe the irreducible decomposition of $\varrho_W$ in terms of the familiar sets of partitions $\alpha$, bipartitions $(\alpha,\beta)$, and unordered bipartitions $\{\alpha,\beta\}$ indexing $\Irr(W)$.  One can find such a description  in Kottwitz's paper \cite{Kottwitz}$-$in particular, Kottwitz shows that the set of irreducible constituents of $\chi_W$ are precisely the special characters of $W$ when $W$ is classical (note also part (c) of Proposition \ref{main-cor}). We include the following statement of these results for completeness.
(See Section \ref{class-sect} below for a precise explanation of how we label the elements of $\Irr(W)$; 
our conventions should be self-explanatory to those familiar with the subject.)


\begin{theorem}[See Kottwitz \cite{Kottwitz}]
\label{thm2}
For classical Coxeter systems $(W,S)$, the character $\chi_{W}$  
decomposes as follows:
\begin{enumerate}
\item[$\bullet$] \textbf{Type $A_n$.} 
\begin{enumerate}
\item[]$\ds\chi_{W} = \sum_\alpha \chi^\alpha,$
\end{enumerate}  where  the sum is over all partitions $\alpha$ of $n+1$.

\item[$\bullet$]  \textbf{Type $BC_n$.} \begin{enumerate}
\item[]$\ds \chi_{W}= \sum_{(\alpha,\beta)} 2^{d(\alpha,\beta)} \chi^{(\alpha,\beta)},$
\end{enumerate}  where  the sum is over all bipartitions $(\alpha,\beta)$ of $n$  such that  $\beta_i \leq \alpha_i+1$ and $\alpha_i' \leq \beta_i' + 1$ for all positive integers $i$, and where $d(\alpha,\beta)$ is the number of cells 
in the Young diagram of $\alpha\cap \beta$ which occur simultaneously at the end of a  column of $\alpha$ and   at the end of a  row of $\beta$.

\item[$\bullet$] \textbf{Type $D_n$.} 
\begin{enumerate}
\item[]$\ds\chi_{W} = \sum_{\alpha} \chi^{\{\alpha\},1} +  \sum_{\alpha} \chi^{\{\alpha\},2} +  \sum_{(\alpha,\beta)} 2^{e(\alpha,\beta)-1} \chi^{\{\alpha,\beta\}},$
\end{enumerate}
 where the first two sums are over all partitions $\alpha$ of $\frac{n}{2}$ (of which there are none if $n$ is odd), where  the third sum is over all  bipartitions $(\alpha,\beta)$ of $n$ such that $\alpha\subsetneq \beta$ and $\beta\setminus \alpha$ contains no $2\times 2$ squares, and where $e(\alpha,\beta)$ is the number of connected components of $\beta\setminus \alpha$.

\end{enumerate}

\end{theorem}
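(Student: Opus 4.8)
The plan, following Kottwitz \cite{Kottwitz}, is to compute each subrepresentation $\varrho_{W,\sigma}$ on its own and then add, using $\chi_W=\sum_\sigma\chi_{W,\sigma}$ with $\sigma$ running over representatives of the conjugacy classes of involutions in $W$. The starting point is that each $\varrho_{W,\sigma}$ is a \emph{monomial} representation. Since $\varrho_W(s)$ carries $a_w$ to $\pm a_w$ when $sw=ws$ and to $a_{sws}$ otherwise, one gets $\varrho_W(w')a_\sigma=\pm a_{w'\sigma (w')^{-1}}$ for every $w'\in W$ (with a sign depending on $w'$); hence $W$ permutes the lines $\QQ a_w$, $w\sim\sigma$, transitively, the stabilizer of $\QQ a_\sigma$ is the centralizer $C_W(\sigma)$, and
\[
\varrho_{W,\sigma}\ \cong\ \Ind_{C_W(\sigma)}^{W}(\varepsilon_\sigma),
\]
where $\varepsilon_\sigma$ is the unique linear character of $C_W(\sigma)$ with $\varrho_W(c)a_\sigma=\varepsilon_\sigma(c)\,a_\sigma$ for $c\in C_W(\sigma)$. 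A short computation (using only that $\varrho_W$ is a representation) shows that $\varepsilon_\sigma$ is the determinant of the action of $C_W(\sigma)$ on the $(-1)$-eigenspace of $\sigma$ in the reflection representation of $W$; equivalently, $\varepsilon_\sigma(s)=-1$ precisely for the simple reflections $s\in C_W(\sigma)$ lying in $\Des(\sigma)$. It remains to expand these induced characters and sum them in each classical type.

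In type $A_n$, where $W=S_{n+1}$, the representation $\varrho_W$ is exactly the Gelfand model of $S_{n+1}$ studied by Adin, Postnikov, and Roichman; concretely, expanding $\Ind_{C_W(\sigma)}^W(\varepsilon_\sigma)$ by the Pieri and wreath-product branching rules and summing over $\sigma$ yields the classical identity $\sum_k(\ldots)=\sum_{\alpha\vdash n+1}s_\alpha$, so every $\chi^\alpha$ occurs with multiplicity exactly one. For this type one may simply cite \cite{APR} and \cite{Kottwitz}.

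For types $BC_n$ and $D_n$ one parametrizes the involutions of the hyperoctahedral group $\ZZ_2\wr S_n$ by ``signed involutions''---a set of signed fixed points together with sets of positive and of negative $2$-cycles. For each such $\sigma$ the centralizer $C_W(\sigma)$ is an explicit direct product of smaller hyperoctahedral groups and wreath products $S_2\wr S_k$, and $\varepsilon_\sigma$ is a correspondingly explicit product of sign characters. Expanding $\Ind_{C_W(\sigma)}^{W}(\varepsilon_\sigma)$ by the branching rules for $\ZZ_2\wr S_n$ writes each summand as a sum of products of plethysms and ordinary Littlewood--Richardson coefficients; one then sums over all conjugacy classes, collects the coefficient of a fixed bipartition $(\alpha,\beta)$, and verifies by a cancellation argument in the ring of symmetric functions that this coefficient equals $2^{d(\alpha,\beta)}$ when $\beta_i\le\alpha_i+1$ and $\alpha_i'\le\beta_i'+1$ for all $i$, and is $0$ otherwise. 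For type $D_n$ one carries out the analogous computation inside $W(D_n)$, invoking Clifford theory to split the degenerate irreducibles $\chi^{\{\alpha\},1}\oplus\chi^{\{\alpha\},2}$ (which produce the first two sums, over partitions $\alpha$ of $n/2$) and to reindex the remaining constituents by unordered bipartitions $\{\alpha,\beta\}$; there the surviving pairs are those with $\alpha\subsetneq\beta$ and $\beta\setminus\alpha$ free of $2\times 2$ squares, occurring with multiplicity $2^{e(\alpha,\beta)-1}$. Finally, Proposition \ref{special-prop} identifies the bipartitions that survive with Lusztig's special irreducible characters, which recovers Kottwitz's statement that the constituents of $\chi_W$ are exactly the special characters of $W$.

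I expect the essential difficulty to be this last symmetric-function computation: the invariants $d(\alpha,\beta)$ and $e(\alpha,\beta)$---counting cells of $\alpha\cap\beta$ that terminate both a column of $\alpha$ and a row of $\beta$, respectively counting connected components of a skew shape---are not visible term by term in the induced characters and must emerge only after the sum over all conjugacy classes of involutions collapses. By contrast, the reduction to $\Ind_{C_W(\sigma)}^W(\varepsilon_\sigma)$, the centralizer and sign-character computations, and the entire type $A_n$ case are either routine or available in the literature.
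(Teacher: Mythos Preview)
Your setup is correct and matches the paper: each $\varrho_{W,\sigma}$ is induced from the linear character $\varepsilon_\sigma$ of $C_W(\sigma)$, and in type $A_n$ this immediately gives the Gelfand model. But for types $BC_n$ and $D_n$ the paper takes a cleaner route than the symmetric-function cancellation you propose, and the difference is worth noting because it is precisely what makes the powers of $2$ transparent.

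Rather than summing the induced characters directly, the paper first proves a refined statement (Theorem~\ref{previous}) giving the multiplicity of each irreducible in an \emph{individual} $\chi_{W,\sigma}$ as a single binomial coefficient: in type $BC_n$ the multiplicity of $\chi^{(\alpha,\beta)}$ in $\chi_{W,\sigma_{k,\ell,m}}$ is $\binom{d(\alpha,\beta)}{|\alpha\cap\beta|-m}$, and in type $D_n$ the multiplicity of $\chi^{\{\alpha,\beta\}}$ in $\chi_{W,\sigma_{k,\ell,m}}$ is $\binom{e(\alpha,\beta)}{f(\alpha,\beta)-\ell}$. These binomial formulas are exactly what Kottwitz computes in \cite{Kottwitz}; the paper's contribution is only to translate Kottwitz's parameters $d$, $e$, $f_0$ into the combinatorial invariants $d(\alpha,\beta)$, $e(\alpha,\beta)$, $f(\alpha,\beta)$, a short exercise using Proposition~\ref{special-prop}. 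Once Theorem~\ref{previous} is in hand, Theorem~\ref{thm2} follows by summing over the representative involutions and applying the binomial theorem: $\sum_m \binom{d}{c-m}=2^d$ in type $BC_n$, and $\sum_{\ell\text{ even}} \binom{e}{f-\ell}=2^{e-1}$ in type $D_n$.

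So the ``essential difficulty'' you anticipate---a cancellation in symmetric functions that magically produces $2^{d(\alpha,\beta)}$---is replaced by two easier steps: (i) quote or reproduce Kottwitz's binomial multiplicity for a fixed involution, and (ii) sum a row of Pascal's triangle. Your plethysm/Littlewood--Richardson expansion would in principle rederive (i), but as written it is only a sketch; the paper avoids the issue entirely by citing \cite{Kottwitz} for (i) and reducing the passage to Theorem~\ref{thm2} to the binomial identity.
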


\begin{remark}
Let $W$ be the Coxeter group of type $BC_n$ and $W' \subset W$ the index two subgroup of type $D_n$. It is interesting to note that while we may view $\Invol{ W'}$ as a  $\varrho_W$-invariant subspace of $\Invol{W}$, the restriction of $\varrho_W$ on this subspace  to $ W'$ is not equivalent to the representation $\varrho_{ W'}$. 

In addition, we mention that there is some ambiguity in how the characters $\chi^{\{\alpha\},i}$ in type $D_n$ (with $n$ even) are labelled here  and in Theorem \ref{previous}. Geck's recent work \cite[Section 3.9]{Geck2} explains a satisfactory way of resolving of this ambiguity.
\end{remark}

We will actually derive this theorem from a more detailed statement, given as Theorem \ref{previous} below, describing the 
irreducible decomposition of the subrepresentations $\varrho_{W,\sigma}$ for each involution $\sigma \in W$. These results have been studied in type $A_n$ in several places previsouly;  see \cite{APR, IRS} in addition to \cite{Kottwitz}.

As a corollary to the preceding two theorems
we  note  the following result, which is interesting to compare with \cite[Theorem 1]{Vinroot}. 
Here, a \emph{Gelfand model} is a representation of a finite group which is  the multiplicity-free sum of all of the group's irreducible representations.

\begin{theorem}\label{subsub} If $(W,S)$ is a finite, irreducible Coxeter system then  
the representation $\varrho_{W}$ is a Gelfand model if and only if $(W,S)$ is of type $A_n$, $H_3$, or $I_2(m)$ with $m$ odd.
\end{theorem}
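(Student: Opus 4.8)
The plan is to reduce the statement to a multiplicity computation and then argue type by type. It is a classical fact that every complex irreducible representation of a finite Coxeter group $W$ is realizable over $\RR$, so every Frobenius--Schur indicator of $W$ equals $1$; by the classical indicator count this gives $\dim\Invol{W}=\#\{w\in W:w^2=1\}=\sum_{\chi\in\Irr(W)}\chi(1)$. Hence if $\varrho_W$ is multiplicity-free, then writing $\chi_W=\sum_{\chi\in S}\chi$ with $S\subseteq\Irr(W)$ forces $\sum_{\chi\in S}\chi(1)=\sum_{\chi\in\Irr(W)}\chi(1)$ and therefore $S=\Irr(W)$; conversely a Gelfand model is multiplicity-free by definition. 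So it suffices to decide, for each irreducible finite $(W,S)$, whether $\chi_W$ is multiplicity-free.

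For the classical types this is read off from Theorem \ref{thm2}. In type $A_n$ the formula $\chi_W=\sum_{\alpha\vdash n+1}\chi^\alpha$ is the sum of \emph{all} irreducible characters, so $\varrho_W$ is a Gelfand model (the usual one for the symmetric group). In type $BC_n$ with $n\ge 2$ the bipartition $((1^n),\emptyset)$ violates the constraint $\alpha_i'\le\beta_i'+1$, so $\chi^{((1^n),\emptyset)}$ does not occur in $\chi_W$ (one can also exhibit bipartitions occurring with multiplicity $2$). In type $D_n$ with $n\ge 4$ one may choose partitions $\alpha,\beta$ with $|\alpha|+|\beta|=n$ that are incomparable for containment, say $\alpha=(1^{\lfloor n/2\rfloor})$ and $\beta=(\lceil n/2\rceil)$, so $\chi^{\{\alpha,\beta\}}$ occurs in none of the three sums and is absent from $\chi_W$. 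Thus $\varrho_W$ is not a Gelfand model in types $BC_n$ ($n\ge 2$) or $D_n$ ($n\ge 4$).

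For type $I_2(m)$ I would compute the $\varrho_{W,\sigma}$ by hand. If $m$ is odd there is a single conjugacy class of $m$ reflections; from the defining formula $\varrho_W(w)\,a_v=\pm a_{wvw^{-1}}$, whose diagonal entry is nonzero exactly when $w$ centralizes the reflection $v$, one checks that $\chi_{\varrho_{W,\sigma}}(w)=\sgn(w)\cdot\#\{\text{reflections }v:wvw^{-1}=v\}$, i.e. $\varrho_{W,\sigma}$ has the character of $P\otimes\sgn$ where $P=\mathrm{Ind}_{\langle s\rangle}^{W}\mathbf{1}$ is the permutation representation of $W$ on its reflections. Since $P\cong\mathbf{1}\oplus\bigoplus_j\rho_j$ over all two-dimensional irreducibles $\rho_j$ and $\rho_j\otimes\sgn\cong\rho_j$, we get $\varrho_{W,\sigma}\cong\sgn\oplus\bigoplus_j\rho_j$, whence $\chi_W=\mathbf{1}\oplus\varrho_{W,\sigma}=\bigoplus_{\chi\in\Irr(W)}\chi$ is a Gelfand model. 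If $m$ is even there are instead two classes $C_1\ni s_1$, $C_2\ni s_2$ of reflections together with the central involution $w_0$, so $\chi_W=\mathbf{1}\oplus\sgn\oplus\chi_{W,\sigma_1}\oplus\chi_{W,\sigma_2}$; a short analysis of the signed conjugation action of $s_1,s_2$ on $C_1$ and $C_2$ (if convenient, splitting into $m\equiv0$ and $m\equiv2\bmod 4$, which governs whether the reflection opposite a simple reflection stays in its own class) shows that some two-dimensional irreducible occurs in $\chi_{W,\sigma_1}\oplus\chi_{W,\sigma_2}$ with multiplicity two --- for instance $\chi_W=\mathbf{1}+\sgn+\eta_1+\eta_2+2\rho_1$ in type $G_2=I_2(6)$, with $\eta_1,\eta_2$ the remaining linear characters --- so $\varrho_W$ is not a Gelfand model.

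Finally, $H_3,H_4,F_4,E_6,E_7,E_8$ are a finite verification using the already-available decomposition of $\chi_W$: for the exceptional Weyl groups from Kottwitz and Casselman \cite{Kottwitz,Casselman}, and for $H_3$ and $H_4$ from the explicit data compiled in the proof of Theorem \ref{main-thm} and Proposition \ref{main-cor}. One finds that $\chi_W$ is multiplicity-free only in type $H_3$ (where $W\cong\ZZ/2\times A_5$), while in each of $H_4,F_4,E_6,E_7,E_8$ some irreducible constituent occurs with multiplicity at least two (in type $H_4$ consistently with the two quaternionic unipotent characters recorded in Proposition \ref{main-cor}(b)). Assembling the four parts yields the asserted list. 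The conceptual content is entirely the opening reduction to multiplicity-freeness; the main obstacle is the dihedral computation for even $m$, with the exceptional types being a routine --- if slightly laborious --- matter of marshalling known character data.
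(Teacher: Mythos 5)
Your proof is correct and follows essentially the same route as the paper: the theorem is read off from the explicit decompositions of $\chi_W$ in each type (Theorem \ref{thm2} for the classical types, the computations in types $H_3$, $H_4$, and $I_2(m)$, and Kottwitz--Casselman for the exceptional Weyl groups). Your opening reduction---that $\varrho_W$ is a Gelfand model as soon as it is multiplicity-free, since all Frobenius--Schur indicators of $W$ equal $1$ and hence $\dim\Invol{W}=\sum_{\chi\in\Irr(W)}\chi(1)$---is a clean observation the paper does not make explicit, though the tables render it unnecessary.
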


Our final results concern   a conjecture  of Kottwitz connecting the decomposition of $\chi_W$ to the left cells of $W$.  (See Section \ref{lcells} for the definition of the left cells and left cell representations of $W$.)

\begin{conjecture}[See Kottwitz \cite{Kottwitz}] \label{kottwitz-conj} 
Let $\Gamma$ be a left cell in $W$ and let $\chi_{\Gamma}$ denote the character of the corresponding left cell representation.  Let $\sigma \in W$ be an involution and write $\Sigma$ for its conjugacy class in $W$. 
Then 
$\langle \chi_{W,\sigma}, \chi_\Gamma \rangle = | \Sigma \cap \Gamma|,$ where 
$\langle\cdot,\cdot\rangle$ denotes the standard $L^2$-inner product on functions $W \to \CC$.
\end{conjecture}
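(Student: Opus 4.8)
The plan is to prove the conjecture outright in the non-crystallographic types $H_3$, $H_4$, and $I_2(m)$, and to establish in general the weaker identity obtained by summing over all involution classes. I would begin by reformulating the statement in terms of multiplicities: writing $\chi_{W,\sigma}=\sum_{\psi\in\Irr(W)}m_{\sigma,\psi}\psi$ and $\chi_\Gamma=\sum_{\psi}c_{\Gamma,\psi}\psi$, the conjecture becomes the bilinear identity $\sum_\psi m_{\sigma,\psi}c_{\Gamma,\psi}=|\Sigma\cap\Gamma|$. Since $\varrho_W=\bigoplus_\sigma\varrho_{W,\sigma}$ over a set of representatives of the involution classes, we have $\chi_W=\sum_\sigma\chi_{W,\sigma}$, while the classes $\Sigma$ partition the involutions of $W$; summing the conjecture over all $\sigma$ therefore produces the weaker statement $\langle\chi_W,\chi_\Gamma\rangle=|\Invol{W}\cap\Gamma|$, where $\Invol{W}\cap\Gamma$ is the set of involutions lying in the cell $\Gamma$.

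For this weaker form I would exploit the Fourier transform. Let $v_\Gamma$ be the vector on $\Uch(W)$ with $v_\Gamma(\psi)=\langle\psi,\chi_\Gamma\rangle$ for $\psi\in\Irr(W)$ and $v_\Gamma(\Phi)=0$ otherwise; property (P3) asserts $\FT v_\Gamma=v_\Gamma$. Using the formula $\chi_W=\sum_\psi(\FT\epsilon)(\psi)\psi$ of (\ref{all-note}) together with the fact that $\FT$ is real symmetric, and viewing $\FT\epsilon$ as a vector on $\Uch(W)$, I would compute
\[\langle\chi_W,\chi_\Gamma\rangle=(\FT\epsilon)\cdot v_\Gamma=\epsilon\cdot(\FT v_\Gamma)=\epsilon\cdot v_\Gamma=\sum_{\psi\in\Irr(W)}\epsilon(\psi)\,\langle\psi,\chi_\Gamma\rangle.\]
Since $\epsilon(\psi)=1$ for every $\psi\in\Irr(W)$ by Proposition \ref{main-cor}(a) --- the values $\epsilon=-1$ occurring only at non-irreducible elements of $\Uch(W)$ in type $H_4$, which are invisible to $v_\Gamma$ --- this collapses in \emph{every} type to $\langle\chi_W,\chi_\Gamma\rangle=\sum_\psi\langle\chi_\Gamma,\psi\rangle$. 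The weaker form is thus reduced to the purely Kazhdan--Lusztig statement that the total multiplicity of irreducible constituents of a left cell representation equals the number of involutions in that cell, i.e. $\sum_\psi\langle\chi_\Gamma,\psi\rangle=|\Invol{W}\cap\Gamma|$. I would establish this either from the cell structure of the Lusztig--Vogan module underlying $\varrho_W$ (cf.\ Definition \ref{intro-def}) or, absent a uniform argument, type by type.

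For the full conjecture in the non-crystallographic types I would verify the bilinear identity directly, class by class and cell by cell. In type $I_2(m)$ the dihedral left cells and the involutions (the identity, the $m$ reflections, and the longest element when $m$ is even) are completely explicit, and $\chi_{W,\sigma}$ is read off from the action in Definition \ref{intro-def}; the identity then follows by inspection, with the parity of $m$ governing the number of reflection classes. In types $H_3$ and $H_4$ the left cells and their characters are available from the Kazhdan--Lusztig tables, and each $\chi_{W,\sigma}$ is a finite computation from Definition \ref{intro-def} (in $H_3$ the Gelfand-model property of Theorem \ref{subsub} gives a useful consistency check, since $\chi_W$ is then multiplicity-free). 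Carrying out these finite, computer-assisted checks for every pair $(\Sigma,\Gamma)$ completes the non-crystallographic cases; I would also note that the classical types can be treated by combining the detailed decomposition of $\varrho_{W,\sigma}$ in Theorem \ref{previous} with the classical RSK and domino-tableau descriptions of left cells, recovering Kottwitz's results.

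The principal obstacle is that the Fourier-transform machinery controls only the aggregate character $\chi_W$, and hence cleanly yields only the $\sigma$-summed weaker form; the strong conjecture demands the finer, per-conjugacy-class data $\langle\chi_{W,\sigma},\chi_\Gamma\rangle$, for which no comparably clean transform identity is available. Consequently the strong statement must be attacked by explicitly decomposing each $\varrho_{W,\sigma}$, and the genuinely laborious step is type $H_4$: one must organize the involutions by conjugacy class, track the left cells of a group of order $14400$, and carry out the multiplicity computation while keeping in mind that $H_4$ is exactly the type producing the quaternionic unipotent characters of Proposition \ref{main-cor}(b). A secondary difficulty is securing the identity $\sum_\psi\langle\chi_\Gamma,\psi\rangle=|\Invol{W}\cap\Gamma|$ uniformly across all types; should a conceptual proof via the Lusztig--Vogan cell structure prove elusive, this too reduces to a type-by-type verification.
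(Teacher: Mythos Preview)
Your plan matches the paper's almost exactly: the weak form via the Fourier transform and property (P3), and the strong form in types $H_3$, $H_4$, $I_2(m)$ by explicit tabulation of cells, cell characters, and intersections with involution classes. Two points deserve correction.

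First, the identity $\sum_{\psi\in\Irr(W)}\langle\chi_\Gamma,\psi\rangle=|\{\sigma\in\Gamma:\sigma^2=1\}|$ that you leave as something to ``establish either from the cell structure of the Lusztig--Vogan module \dots\ or type by type'' is not proved in this paper at all: it is quoted as \cite[Theorem~1.1]{Geck_recent}. Your Fourier-transform reduction to this statement is correct and is precisely the paper's argument, but the statement itself is an input, not an output.

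Second, your remark that ``the classical types can be treated by combining \dots\ Theorem~\ref{previous} with the classical RSK and domino-tableau descriptions of left cells, recovering Kottwitz's results'' overreaches. Kottwitz established the conjecture only in type $A_n$ (via RSK); the paper explicitly records types $BC_n$ and $D_n$ as open (alongside $E_7$ and $E_8$). The domino-tableau parametrization of left cells in types $BC_n$ and $D_n$ does not by itself yield the per-class identity $\langle\chi_{W,\sigma},\chi_\Gamma\rangle=|\Sigma\cap\Gamma|$, and no such argument is given here.
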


Kottwitz  \cite{Kottwitz} observed that in type $A_n$ the conjecture is true, following from the known description of the left cells of $W$ in terms of the RSK-correspondence. Casselman \cite{Casselman} meanwhile verified the conjecture  in types $F_4$ and $E_6$ by a computer calculation. In Section \ref{lcells} below, we show  ourselves  that the conjecture holds  in all of the non-crystallographic cases $H_3$, $H_4$, and $I_2(m)$.  Two preprints of Geck and Bonnaf\'e \cite{Geck1,Geck2}, appearing after the completion of this article, establish several more cases, leaving the conjecture open only in type $E_8$.

We note here that a  
 weakened version of the conjecture follows   immediately from Theorem \ref{main-thm} and recent work of Geck \cite{Geck_recent}.

\begin{theorem} Let $\Gamma$ be a left cell in $W$ and let $\chi_{\Gamma}$ denote the character of the corresponding left cell representation. Then the inner product
$\langle \chi_W, \chi_\Gamma \rangle$ is equal to the number of involutions in $\Gamma$.
\end{theorem}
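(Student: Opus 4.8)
\emph{Proof proposal.} The plan is to deduce this from formula (\ref{all-note}) of Theorem \ref{main-thm}, property (P3) of the Fourier transform matrix $\FT$, Proposition \ref{main-cor}(a), and the recent theorem of Geck \cite{Geck_recent}. First I would attach to the given left cell $\Gamma$ the vector $c_\Gamma \in \RR^{\Uch(W)}$ whose $\Phi$-entry is the multiplicity $\langle \chi_\Gamma,\Phi\rangle$ when $\Phi \in \Irr(W)\subseteq \Uch(W)$ and is $0$ for $\Phi \in \Uch(W)\setminus\Irr(W)$; this is precisely one of the "irreducible multiplicities, extended by zeros" vectors appearing in (P3), so $\FT c_\Gamma = c_\Gamma$.

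Next I would compute $\langle \chi_W,\chi_\Gamma\rangle$ by expanding both characters over $\Irr(W)$. By (\ref{all-note}) we have $\chi_W = \sum_{\psi\in\Irr(W)}(\FT\epsilon)(\psi)\psi$, while $\chi_\Gamma = \sum_{\psi\in\Irr(W)}(c_\Gamma)_\psi\,\psi$, so orthonormality of the irreducible characters gives $\langle \chi_W,\chi_\Gamma\rangle = \sum_{\psi\in\Irr(W)}(\FT\epsilon)(\psi)(c_\Gamma)_\psi$. Since $c_\Gamma$ vanishes off $\Irr(W)$, this coincides with the standard inner product $\langle \FT\epsilon, c_\Gamma\rangle$ of vectors in $\RR^{\Uch(W)}$. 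Because $\FT$ is real symmetric and fixes $c_\Gamma$, I can transfer it: $\langle \FT\epsilon, c_\Gamma\rangle = \langle \epsilon, \FT c_\Gamma\rangle = \langle \epsilon, c_\Gamma\rangle = \sum_{\psi\in\Irr(W)}\epsilon(\psi)\langle \chi_\Gamma,\psi\rangle$. Now I would invoke Proposition \ref{main-cor}(a), which gives $\epsilon(\psi)=1$ for every $\psi\in\Irr(W)$, so the right-hand side is just $\sum_{\psi\in\Irr(W)}\langle\chi_\Gamma,\psi\rangle$, the total number of irreducible constituents of the left cell representation $\chi_\Gamma$ counted with multiplicity.

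The final step is to identify this total multiplicity with the number of involutions in $\Gamma$, which is exactly the content of Geck's recent result \cite{Geck_recent}. Chaining the equalities then yields $\langle \chi_W,\chi_\Gamma\rangle = |\{w\in\Gamma : w^2=1\}|$, as desired.

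Essentially every link in this chain is either bookkeeping or a citation of an established fact, so there is no serious obstacle internal to the argument; the substantive input is divided between Theorem \ref{main-thm} (in particular the non-obvious identity (\ref{all-note}) together with $\epsilon|_{\Irr(W)}\equiv 1$) and Geck's theorem. The one point that warrants care is the passage from a sum indexed by $\Irr(W)$ to the full $\Uch(W)$-indexed inner product: this is legitimate precisely because $c_\Gamma$ is supported on $\Irr(W)$, and it is this support condition, combined with (P3) and the symmetry of $\FT$, that allows the Fourier transform to be moved off $\epsilon$ and onto $c_\Gamma$, where it then disappears.
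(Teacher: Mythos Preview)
Your argument is correct and follows essentially the same route as the paper: expand $\chi_W$ via (\ref{all-note}), use the symmetry of $\FT$ together with (P3) to move the Fourier transform onto the multiplicity vector $c_\Gamma$ where it disappears, apply $\epsilon|_{\Irr(W)}\equiv 1$ from Proposition \ref{main-cor}(a), and finish with Geck's theorem. The paper compresses all of this into one sentence, but the ingredients and the logic are identical.
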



\begin{proof}
Proposition \ref{main-cor} and property (P3) of the Fourier transform matrix   imply that $\langle \chi_W, \chi_\Gamma \rangle = \sum_{\psi \in \Irr(W)} \langle \psi, \chi_\Gamma \rangle$, which is the cardinality of the set  $\{ \sigma \in \Gamma : \sigma^2 =1 \}$  by \cite[Theorem 1.1]{Geck_recent}.
%
\end{proof}

%
%
%
%
%

We organize this article as follows. In Section \ref{prelim}, we note several preliminaries concerning Coxeter systems, the representation $\varrho_W$, and the set $\Uch(W)$. We reproduce the calculations needed to prove Theorems \ref{thm2} and \ref{subsub} in Section \ref{cmb-sect}.  In Section \ref{FT-sect}, we describe in detail the Fourier transform matrices attached to $\Uch(W)$ and derive from this information the proof of Theorem \ref{main-thm}. Finally, in Section \ref{lcells} we prove Kottwitz's conjecture in the non-crystallographic types $H_3$, $H_4$, and $I_2(m)$. This last section explicitly describes the left cells in these types, and includes a proof of property (P3) for the Fourier transform matrix of $\Uch(W)$.

\subsection*{Acknowledgements}

I thank David Vogan and Gunter Malle for several helpful comments and suggestions.

\section{Preliminaries}\label{prelim}

Throughout, we adopt the following notational conventions:
$\NN$ is the set of nonnegative integers, $\PP$ is the set of positive integers, and $[n]$ is the set of the first $n$ positive integers, with $[0] = \varnothing$.

\subsection{Representing $W$ in $\Invol{W}$}
\label{prop2.1-sect}

Let $(W,S)$  be a finite {Coxeter system} with length function $\ell : W \to \NN$, and define the vector space $\Invol{W}$ as in the introduction.  
Here, we briefly confirm that the map $\varrho_W$ given in Definition \ref{intro-def} indeed extends to a representation of $W$. The easiest way of deriving this from known results is to prove a slightly more general fact, which goes as follows.

%
%
%
 %
 For each constant $k \in \QQ$, let $\varrho_{W,k} : S \to \GL(\Invol{W})$  be the map given by  the formula
\[ \varrho_{W,k}(s) a_w = \begin{cases} 
a_w+ka_{sw} ,&\text{if $sw=ws$ and $s \notin \Des(w)$},\\
-a_w,&\text{if $sw=ws$ and $s \in \Des(w)$}, \\
a_{sws},&\text{if $sw\neq ws$},
\end{cases}\quad\text{for $s \in S$ and $w \in W$ with $w^2=1$.}\]  Here we have written $\Des(w) = \{ s \in S : \ell(ws)<\ell(w)\}$ for the right descent set of $w \in W$, which coincides with the left descent set when $w^2=1$.
In this notation, the map $\varrho_W$ from the introduction is precisely $\varrho_{W,0}$.

Define $\H_{q^2}$ as the  Hecke algebra with parameter $q^2$ corresponding to $(W,S)$: for us, this is the  unital associative $\QQ[q]$-algebra with basis $\{ T_w : w \in W\}$ and multiplication given by the rules 
\[ \label{specialization} \begin{cases}
T_wT_{w'} = T_{ww'},&\text{for $w,w' \in W$ with }\ell(ww') = \ell(w) + \ell(w'), \\
 (T_s +1 )(T_s-q^2) =0,&\text{for }s \in S.\end{cases}\]
 In \cite{LV,LV2}, Lusztig and Vogan show that  the multiplication defined for $s \in S$ and $w \in W$ by
\[ \label{first} T_s a_w = \begin{cases} 
qa_w + (q+1)a_{sw},&\text{if $sw=ws$ and 
$s \notin \Des(w)$,
}\\
(q^2-q-1)a_w + (q^2-q)a_{sw},&\text{if $sw=ws$ and 
$s \in \Des(w)$,
}\\
a_{sws},&\text{if $sw\neq ws$ and 
$s \notin \Des(w)$,
}\\
(q^2-1)a_w + q^2 a_{sws},&\text{if $sw\neq ws$ and 
$s \in \Des(w)$.
}
\end{cases}
\]
makes the $\QQ[q]$-module
$\Invol{W}\otimes_\QQ \QQ[q]$ into an $\H_{q^2}$-module. (It is interesting  to compare this with the Hecke algebra module in \cite{APR}, which in type $A_n$ is isomorphic to the one here once we replace $q$ in \cite{APR} with $q^2$.) Specializing $q$ to 1  shows that  $\varrho_{W,2}$ is a well-defined $W$-representation, and from this  we deduce the following stronger statement.

\begin{proposition}\label{prelim-prop} Let $(W,S)$ be a finite Coxeter system. 
\begin{enumerate}
\item[(1)] The map $\varrho_{W,k}$ extends to a representation of $W$ in $\Invol{W}$ for any $k \in \QQ$.
\item[(2)] The representations $\varrho_{W,k}$ and $\varrho_{W,k'}$ are isomorphic for all $k,k' \in \QQ$.  
\end{enumerate}
\end{proposition}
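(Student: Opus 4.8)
The plan is to deduce everything from the single already-established fact that $\varrho_{W,2}$ is a well-defined $W$-representation, obtained by specializing the Lusztig--Vogan $\mathcal{H}_{q^2}$-module at $q=1$. The key observation is that the family $\{\varrho_{W,k}\}_{k\in\QQ}$ is, in a precise sense, the ``orbit'' of $\varrho_{W,2}$ under a one-parameter group of rescalings of the basis, plus one degenerate point. Concretely, for a nonzero scalar $c\in\QQ^\times$ consider the linear automorphism $D_c$ of $\Invol{W}$ defined on the basis by $D_c(a_w)=c^{\,\ell(w)}a_w$ (or some similar length-graded rescaling). A direct check on each of the three cases in the formula for $\varrho_{W,k}(s)$ shows that $D_c\circ \varrho_{W,k}(s)\circ D_c^{-1}=\varrho_{W,k'}(s)$ for an appropriate $k'$ depending on $k$ and $c$: the off-diagonal coefficient $k$ in the first case gets multiplied by a power of $c$ coming from the length difference $\ell(sw)-\ell(w)=\pm1$, while the diagonal entries $-a_w$ and the conjugation entries $a_{sws}$ are unaffected (note $\ell(sws)=\ell(w)$ when $sw\neq ws$ and $w^2=1$, since $sws$ is again an involution conjugate to $w$ — this parity/length bookkeeping is exactly the point that needs care). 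This simultaneously proves both parts: conjugation by an \emph{invertible} operator shows $\varrho_{W,k}$ is a representation whenever some $\varrho_{W,k'}$ in its rescaling class is, and it exhibits the isomorphism in part (2) for all the relevant $k,k'$.

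First I would carry out the case-by-case verification that $D_c$ intertwines $\varrho_{W,k}$ with $\varrho_{W,\lambda(c)k}$ for an explicit function $\lambda$; this is the routine calculation alluded to in the excerpt and I would not grind through it, but I would record precisely which sign of $\ell(sw)-\ell(w)$ occurs (it is determined by whether $s\in\Des(w)$, and the two cases $sw=ws$ with $s\notin\Des(w)$ versus $s\in\Des(w)$ are handled by the first two lines of the formula). Next, from $\varrho_{W,2}$ being a representation I conclude that $\varrho_{W,k}$ is a representation for every $k$ in the rescaling orbit of $2$, which is all of $\QQ^\times$ once $\lambda$ is seen to be surjective onto $\QQ^\times$ (a power of $c$ times $2$ ranges over $\QQ^\times$ as $c$ ranges over $\QQ^\times$, after passing to a suitable field extension if necessary, or one argues over $\QQ(c)$ and then specializes). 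Then the remaining point is $k=0$: the representation $\varrho_{W,0}=\varrho_W$ is recovered as a limit/specialization $c\to 0$, or more cleanly by specializing a $\QQ[c]$-linear version of the intertwiner, so that $\varrho_{W,0}$ is a well-defined representation even though it is not isomorphic to the others via an invertible $D_c$.

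For part (2), the isomorphism $\varrho_{W,k}\cong\varrho_{W,k'}$ for $k,k'\in\QQ^\times$ is immediate from the intertwiner $D_c$ with $c$ chosen so that $\lambda(c)k=k'$. The subtlety is whether $\varrho_{W,0}$ is isomorphic to $\varrho_{W,k}$ for $k\neq0$: here $D_c$ degenerates as $c\to0$, so I would instead observe that $\varrho_{W,k}$ for $k\neq0$ is \emph{conjugate over $\QQ$} to a representation whose matrices, in a filtered basis, have the same ``leading term'' as $\varrho_{W,0}$ — more robustly, I would exhibit an explicit change of basis (upper-triangular with respect to a linear extension of Bruhat order, with $1$'s on the diagonal) sending $\varrho_{W,0}$ to $\varrho_{W,k}$; such a matrix is invertible over $\QQ$ and its existence follows by solving triangular linear systems, using again the case analysis of the defining formula. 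The main obstacle I anticipate is precisely this last step: making the length/parity bookkeeping airtight so that the intertwining operator is genuinely triangular (hence invertible) rather than merely ``triangular up to the degenerate direction,'' and in particular confirming that no relation among the $a_w$ is forced at $k=0$ that was absent for $k\neq0$. Once that is pinned down, both statements follow.
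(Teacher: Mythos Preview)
Your overall strategy --- conjugate $\varrho_{W,2}$ by a diagonal rescaling $D_c(a_w)=c^{\,\phi(w)}a_w$ to reach the other $\varrho_{W,k}$ --- is exactly the right idea, and it is morally the paper's approach too. But the specific weight $\phi=\ell$ you propose does not work, and the parenthetical justification you give is false: it is \emph{not} true that $\ell(sws)=\ell(w)$ whenever $w^2=1$ and $sw\neq ws$. Already in $S_3$, with $w=s_1$ and $s=s_2$, one has $sw\neq ws$ and $\ell(sws)=3\neq 1=\ell(w)$. So in the third case of the defining formula, $D_c\,\varrho_{W,k}(s)\,D_c^{-1}a_w = c^{\ell(sws)-\ell(w)}a_{sws}$, which is not $a_{sws}$, and the conjugated map is not of the form $\varrho_{W,k'}$ at all. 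Conjugate involutions simply need not have equal length.

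The paper repairs this by replacing $\ell(w)$ with $n(w)$, the dimension of the $(-1)$-eigenspace of $w$ in the geometric representation. This \emph{is} a conjugacy invariant, so $n(sws)=n(w)$ automatically; and one checks that if $w^2=1$, $sw=ws$, and $s\notin\Des(w)$, then $n(sw)=n(w)+1$. Ordering the involutions by $n$, every $\varrho_{W,k}(s)$ becomes block lower triangular with $k$-independent diagonal blocks and the $k$-dependence sitting exactly one block below the diagonal. Your rescaling then works with $\phi=n$ and yields $D_c\,\varrho_{W,k}(s)\,D_c^{-1}=\varrho_{W,ck}(s)$; since $\varrho_{W,2}$ is a representation, so is $\varrho_{W,k}$ for every $k\neq 0$, and the relations (being polynomial in $k$) then hold at $k=0$ as well. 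For part (2) the paper does something much simpler than your proposed triangular change of basis: since the $k$-dependence is strictly below the block diagonal, the trace of any word in the $\varrho_{W,k}(s)$ is independent of $k$, so all the $\varrho_{W,k}$ have the same character. This handles $k=0$ and $k\neq 0$ uniformly without any limit or triangular-system argument.
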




\begin{proof}
Let $n(w)$ denote the dimension of the $-1$-eigenspace of $w \in W$ in its geometric representation.
Clearly $n(sws) = n(w)$ for all $w \in W$ and $s \in S$, and  it is  a straightforward exercise to show from elementary properties of the geometric representation that 
$n(sw) = n(w) + 1$ if 
$w^2=1$ and $sw=ws$ and $s \notin \Des(w)$.

List the involutions $w_1, w_2,\dots,w_N \in W$ in an order such that $i\leq j$ implies $n(w_i) \leq n(w_j)$.  
With respect to this ordered basis,
the matrices of $\varrho_{W,k}(s)$ for  $ s \in S$ are all block lower triangular and their diagonal blocks are independent of $k$. Given this observation, it follows that   $\varrho_{W,k}$  extends to a $W$-representation for all $k \in \QQ$ if and only if it does so for a single nonzero value of $k$.
As this occurs for $k=2$, part (1) follows. 
The same observation shows that  
the trace of any product of the matrices $\varrho_{W,k}(s)$ for $s \in S$  has no dependence on $k$, so $\varrho_{W,k}$ and $\varrho_{W,k'}$ have the same character, which proves (2).
\end{proof}

\subsection{References for the construction of $\Uch(W)$}\label{uch-sect}

In this  section we provide references for the construction of  $\Uch(W)$ for each finite, irreducible Coxter system $(W,S)$, 
 and also for the associated data $\FakeDeg$, $\Deg$, and $\Eig$. 
 
 Even before discussing $\Uch(W)$, we may give the general definition of the fake degree attached to a unipotent character. 
 The \emph{fake degree} of an irreducible character $\Phi \in \Irr(W)$ is the polynomial $\FakeDeg(\Phi) \in \NN[x]$ whose coefficients are the multiplicities of $\Phi$ in the graded components of the coinvariant algebra of $W$ (see \cite[\S2.4 and \S11.1]{C}). The set of irreducible characters $\Irr(W)$ always forms a subset of $\Uch(W)$, and we define the fake degrees of all $\Phi \in \Uch(W)\setminus\Irr(W)$ to be zero.
For $\Phi \in \Irr(W)$, 
$\Deg(\Phi)$ and $\Eig(\Phi)$ are defined by
\[  \Deg(\Phi) = \text{the \emph {generic degree} of $\Phi$ (see  \cite[\S10.11]{C})}
\qquad\text{and}\qquad
\Eig(\Phi) = 1,
\]
while the polynomial $\FakeDeg(\Phi)$ has the following formula \cite[Proposition 11.1.1]{C}. 

\begin{proposition}\label{fake-prop} For  $\Phi \in \Irr(W)$ we have
  \[ \FakeDeg(\Phi) = \prod_{i=1}^\ell \(1-x^{d_i}\) \cdot \frac{1}{|W|} \sum_{w \in W} \frac{\Phi(w)}{\det(1-xw)}, \]
where $d_1,\dots,d_\ell$ are the degrees of the basic polynomials invariants of $W$ (see \cite[\S2.4]{C}), and the determinant of $1-xw$ is evaluated by identifying $W$ with the image of its geometric representation.
\end{proposition}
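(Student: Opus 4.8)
The plan is to recognize this as the classical graded-multiplicity (``fake degree'') identity and to derive it from Molien's formula together with Chevalley's theorem on the structure of the polynomial ring over its ring of invariants. Fix notation: let $V$ be the complexified geometric representation of $W$, so $\dim V = \ell$; let $S = \CC[V] = \bigoplus_{d\ge 0}S_d$ be its graded coordinate ring; let $S^W\subseteq S$ be the subring of $W$-invariants; and let $I \subseteq S$ be the ideal generated by the positive-degree homogeneous elements of $S^W$, so that $S/I$ is the coinvariant algebra. By the definition of the fake degree recalled just before the statement, $\FakeDeg(\Phi) = \sum_{d\ge 0}\dim\Hom_W\bigl(\Phi,(S/I)_d\bigr)\, x^d$.

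First I would record Molien's identity for $S$ itself. Because $\sum_{d\ge 0}\tr(w\mid S_d)\, x^d = \det(1 - xw \mid V^*)^{-1}$, and the geometric representation carries a nondegenerate $W$-invariant symmetric bilinear form so that $V\cong V^*$ as $W$-modules, and because every finite Coxeter group has real-valued irreducible characters, taking the inner product with $\Phi$ gives
\[ \sum_{d\ge 0}\dim\Hom_W(\Phi, S_d)\, x^d \;=\; \frac{1}{|W|}\sum_{w\in W}\frac{\Phi(w)}{\det(1-xw)}. \]

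Next I would invoke Chevalley's theorem: multiplication induces an isomorphism of graded $W$-modules $S \cong S^W \otimes_\CC (S/I)$ in which $W$ acts trivially on the factor $S^W$ (concretely, one realizes $S/I$ as the space of $W$-harmonic polynomials inside $S$), while $S^W$ is itself a polynomial ring on homogeneous generators of degrees $d_1, \dots, d_\ell$, so that $\sum_{d\ge 0}\dim (S^W)_d\, x^d = \prod_{i=1}^\ell (1-x^{d_i})^{-1}$. Applying $\Hom_W(\Phi, -)$ to this decomposition --- the trivial $W$-action on $S^W$ is what lets that tensor factor come out --- and comparing generating functions yields
\[ \sum_{d\ge 0}\dim\Hom_W(\Phi, S_d)\, x^d \;=\; \Bigl(\prod_{i=1}^\ell (1-x^{d_i})^{-1}\Bigr)\cdot\FakeDeg(\Phi). \]
Combining this with the Molien identity above and clearing the denominator $\prod_{i=1}^\ell(1-x^{d_i})^{-1}$ gives exactly the claimed formula.

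There is no real obstacle beyond quoting the structure theory correctly; the two points that deserve a moment's care are (i) that $V\cong V^*$ and that $\Phi$ is real-valued, which together are what allow $\Phi(w)$ rather than $\overline{\Phi(w)}$ to appear in the numerator, and (ii) that the decomposition $S\cong S^W\otimes(S/I)$ is $W$-equivariant with $W$ acting trivially on $S^W$, which is precisely what licenses dividing out the Hilbert series of $S^W$. Both are completely standard, so the argument is really just a short assembly of classical facts (Molien's formula, Chevalley's theorem, Shephard--Todd), for which one may cite \cite{C}.
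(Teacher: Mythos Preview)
Your proof is correct and is the standard argument: Molien's formula gives the graded multiplicity generating function for $S$, Chevalley's theorem factors this as the Hilbert series of $S^W$ times the fake degree, and clearing the product $\prod_i(1-x^{d_i})^{-1}$ yields the claim. The paper itself does not prove this proposition at all; it simply records the formula as well-known and cites \cite[Proposition 11.1.1]{C}, so your write-up is in fact more than the paper provides and matches what one finds in the cited reference.
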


Explicit expressions for the right-hand side of this formula when $(W,S)$ is classical appear in \cite[\S13.8]{C}.
We list the fake degrees in type $I_2(m)$  in Section \ref{i2-unipotent}. In the remaining exceptional and non-crystallographic types, one can readily compute the fake degrees  using  the
 proposition and  the character table of $W$ (see also \cite{LA,LA2,LuBe}).

Now to describe $\Uch(W)$ itself. When $(W,S)$ is crystallographic,  $\Uch(W)$ corresponds to the actual set of unipotent characters of an associated  finite reductive group.   Carter's book \cite{C} contains an excellent exposition of this correspondence; Geck and Malle's paper \cite[\S2]{GeckMalle} also serves as a useful reference.
We emphasize, however, that the construction of $\Uch(W)$ is due originally to Lusztig \cite{L,L_app}, as are the following facts.
In brief, if $G$ is a simple algebraic group defined over a finite field with $q$ elements, with Frobenius map $F : G\to G$ and Weyl group $(W,S)$, and if the finite group $G^F = \{g \in G: F(g) =g\}$ is split so that $F$ acts trivially on $W$, then the following hold:
\begin{itemize}
\item[$\bullet$] The number of unipotent characters of $G^F$, together with their 
Frobenius eigenvalues (as defined in \cite[Chapter 11]{L}), depends only on the isomorphism class of 
$(W,S)$. 
\item[$\bullet$] The  degrees of the unipotent characters of $G^F$ are given by the values at $q$ of a  set of generic degree polynomials, which also depends only on the isomorphism class of $(W,S)$.
\end{itemize}
A complete parametrization of $\Uch(W)$ in these cases, together with a list of the associated degree polynomials, appears in  \cite[\S13.8 and \S13.9]{C}, while  \cite[Theorem 11.2]{L} classifies in very simple way all of the  Frobenius eigenvalues (see also Observation \ref{eig-obs} below).
From this description derives the following proposition, which summarizes some results of Lusztig \cite{LRational} and shows that there is a natural definition of the Frobenius-Schur indicator on the formal set $\Uch(W)$ when $(W,S)$ is a Weyl group.

\begin{proposition}[See Lusztig \cite{LRational}]\label{classical-fsd} Let $G$ be a simple algebraic group over an algebraically closed field of positive characteristic, with a Frobenius map $F: G\to G$ for which the finite group $G^F$ is split. Then the irreducible unipotent characters of $G^F$ all have Frobenius-Schur indicator 1 or 0, according to whether their 
   Frobenius eigenvalues are real or non-real.
  \end{proposition}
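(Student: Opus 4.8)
The plan is to obtain the proposition by combining two inputs, both essentially due to Lusztig: a rationality statement guaranteeing that no unipotent character is quaternionic, and a compatibility statement relating complex conjugation of unipotent characters to complex conjugation of Frobenius eigenvalues. Recall that $\fsd(\Phi)$ equals $1$ if $\Phi$ is afforded over $\RR$, equals $-1$ if $\Phi$ is real-valued but not afforded over $\RR$, and equals $0$ if $\Phi$ is not real-valued. It therefore suffices to show (i) every irreducible unipotent character $\Phi$ of $G^F$ is realizable over the subfield $\QQ(\Phi) \subseteq \CC$ generated by its character values, and (ii) $\Phi$ is real-valued if and only if $\Eig(\Phi) \in \RR$. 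Granting (i) and (ii): if $\Eig(\Phi)$ is real then $\Phi$ is real-valued by (ii), so $\QQ(\Phi) \subseteq \RR$, and by (i) $\Phi$ is afforded over $\RR$, giving $\fsd(\Phi) = 1$; if $\Eig(\Phi)$ is non-real then $\Phi$ is not real-valued by (ii), giving $\fsd(\Phi) = 0$.

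For (i), I would cite the rationality results of \cite{LRational}, which show in particular that every unipotent representation of $G^F$ has trivial Schur index and hence is afforded over its field of character values. This is the arithmetic heart of the matter, and there is nothing to reprove here: it is exactly what \cite{LRational} provides (and for the classical groups it is in any case well known). The only consequence we need is the weaker assertion that no unipotent $\Phi$ is quaternionic.

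For (ii), the key point is that the involution $\Phi \mapsto \overline{\Phi}$ of $\Irr(G^F)$ preserves the set of unipotent characters and satisfies $\Eig(\overline{\Phi}) = \overline{\Eig(\Phi)}$. Conceptually this holds because the Frobenius eigenvalue of $\Phi$ is extracted from the eigenvalues of $F$ on the $\ell$-adic cohomology of the relevant Deligne--Lusztig varieties, and passing to $\overline{\Phi}$ replaces each such eigenvalue by its complex conjugate; alternatively one verifies $\Eig(\overline{\Phi}) = \overline{\Eig(\Phi)}$ directly against Lusztig's parametrization of $\Uch(W)$ by Fourier-transform data and his formula for the Frobenius eigenvalue in \cite[Theorem 11.2]{L} (cf.\ Observation \ref{eig-obs} below). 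Given this, $\Phi$ is real-valued exactly when $\overline{\Phi} = \Phi$, exactly when $\Eig(\Phi) = \overline{\Eig(\Phi)}$, i.e.\ exactly when $\Eig(\Phi) \in \RR$.

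I expect the main obstacle to be not the Schur-index input, which we take from \cite{LRational} as a black box, but the precise justification of $\Eig(\overline{\Phi}) = \overline{\Eig(\Phi)}$: the cohomological route requires keeping track of which cohomology groups realize a given unipotent character and how the $F$-eigenvalues occurring there relate to the normalized invariant $\Eig(\Phi)$, while the combinatorial route forces a case-by-case check through the exceptional types (the classical families contribute only eigenvalues $\pm 1$, so they are immediate, consistent with Proposition \ref{main-cor}(a)) using the tables in \cite[\S13.9]{C}.
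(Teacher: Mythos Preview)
Your approach is close in spirit to the paper's, which is itself just a terse ``compare these references'': both of you invoke \cite{LRational} for the Schur-index input and \cite[Theorem 11.2]{L} for the Frobenius eigenvalue side. The paper's proof simply says to match the explicit list of indicators $\fsd(\Phi)$ (as extracted from \cite{LRational} via \cite[\S6.4]{LV}, or from the tables in \cite{GeckSchur} and \cite[\S13.9]{C}) against the explicit list of Frobenius eigenvalues.

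There is, however, a genuine logical gap in your argument for (ii). From the compatibility $\Eig(\overline{\Phi}) = \overline{\Eig(\Phi)}$ you only obtain one implication: if $\Phi$ is real-valued then $\overline{\Phi} = \Phi$, hence $\Eig(\Phi) = \overline{\Eig(\Phi)}$ is real. The converse does not follow. Nothing in the compatibility statement prevents two distinct unipotent characters $\Phi \neq \overline{\Phi}$ from sharing the same real Frobenius eigenvalue; you would need an injectivity-type statement (that complex conjugation on unipotent characters is \emph{determined} by its effect on $\Eig$) to conclude $\overline{\Phi} = \Phi$ from $\Eig(\Phi) \in \RR$. So your chain ``$\overline{\Phi} = \Phi$, exactly when $\Eig(\Phi) = \overline{\Eig(\Phi)}$'' is only half-justified as written.

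You do flag the case-by-case route at the end, and that is indeed what closes the gap: one checks directly from the tables (or equivalently from the parametrization underlying Observation \ref{eig-obs} and the uniqueness in Proposition \ref{cmplx-prop}) that every unipotent $\Phi$ with $\Eig(\Phi) \in \{\pm 1\}$ is in fact real-valued. But once you are doing that tabular comparison, you are essentially carrying out the paper's proof, and the conceptual scaffolding of (ii) via $\Eig(\overline{\Phi}) = \overline{\Eig(\Phi)}$ is no longer doing independent work for the harder direction.
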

    
  \begin{proof}
  Perhaps the simplest way to extract this result from \cite{LRational} is to 
  compare \cite[Theorem 11.2]{L} with the description of $\epsilon(\Phi)$ (from \cite{LRational}) given in \cite[\S6.4]{LV}. Alternatively, the result follows by comparing \cite[Theorem 11.2]{L} with
  \cite[Corollary 1.12]{LRational} if $G$ is classical, or with     Table 1 and Proposition 5.6 in \cite{GeckSchur}  alongside the tables in \cite[\S13.9]{C} or \cite{L} if $G$ is exceptional. 
\end{proof}


%
%
When the Coxeter system $(W,S)$ is non-crystallographic there is no corresponding reductive group, and the definition of $\Uch(W)$ is instead based  on heuristic arguments involving a list of postulates considered plausible desiderata.  
Lusztig's paper \cite{L_app} lists these postulates and constructs the set $\Uch(W)$ with the associated degree polynomials when $(W,S)$ is of any of the non-crystallographic types $H_3$, $H_4$, or $I_2(m)$.
 The corresponding Frobenius eigenvalues are given in \cite{L_app} for  $H_3$, in \cite{Malle74} for  $H_4$, and in \cite{L_exotic} for  $I_2(m)$; however,  this information is not presented in the literature as clearly as  in the crystallographic case.
Helpfully, one can access all  the relevant data in types $H_3$ and $H_4$ (as well as in the exceptional types $E_6$, $E_7$, $E_8$, $F_4$, and $G_2$)
  from the {\tt{UnipotentCharacters}} command in the computer algebra system CHEVIE \cite{CHEVIE}.
  To deal with the dihedral case, it is expedient to  review  an explicit construction of $\Uch(W)$; we do this in Section \ref{i2-unipotent} below.
  
\subsection{Two involutions $\Delta$ and $j$ of $\Uch(W)$}\label{invol-sect}
 
 We now  define two canonical involutions $\Delta$ and $j$ of the set $\Uch(W)$ which will be of importance in Section \ref{FT-sect}.
The first of these, $\Delta$, 
arises from the observation that there is  
 a  well-defined notion of formal complex conjugation on $\Uch(W)$. This fact  derives easily from the  descriptions we  cited in the previous section, and  we  record it as the following proposition for future reference.

\begin{proposition}\label{cmplx-prop} For each finite, irreducible Coxeter system $(W,S)$, there exists a \emph{unique} involutory permutation $\Delta$ of $\Uch(W)$ 
with the following properties:
\begin{enumerate}
\item[(i)] $\Delta$  preserves $\Deg(\Phi)$ and inverts $\Eig(\Phi)$ for each $\Phi \in \Uch(W)$.
\item[(ii)] $\Delta$ fixes $\Phi \in \Uch(W)$ if and only if $\Eig(\Phi) = \pm 1$.
\end{enumerate}
The involution $\Delta$ is the identity permutation if and only if $(W,S)$ is of classical type.
Furthermore, in all types, $\Delta$ fixes all elements of $\Irr(W)\subset \Uch(W)$
\end{proposition}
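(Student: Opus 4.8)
The plan is to read the proposition off the explicit descriptions of $\Uch(W)$, $\Deg$, and $\Eig$ compiled in Section~\ref{uch-sect}, supplemented in the dihedral case by the construction recalled in Section~\ref{i2-unipotent}. I would first isolate the one genuine combinatorial input --- call it $(\star)$ --- namely that for each $\Phi\in\Uch(W)$ with $\Eig(\Phi)\neq\pm1$ there is \emph{exactly one} $\Phi'\in\Uch(W)$ satisfying $\Deg(\Phi')=\Deg(\Phi)$ and $\Eig(\Phi')=\overline{\Eig(\Phi)}$, and verify this by inspection of the tables. Note that such a $\Phi'$ automatically again has $\Eig(\Phi')\neq\pm1$.

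Granting $(\star)$, I would \emph{define} $\Delta$ by setting $\Delta(\Phi)=\Phi$ whenever $\Eig(\Phi)=\pm1$ and $\Delta(\Phi)=\Phi'$ (the element furnished by $(\star)$) otherwise. Properties (i) and (ii) then hold essentially by construction: degree is preserved and the eigenvalue inverted on the locus $\{\Eig=\pm1\}$ trivially (since $(\pm1)^{-1}=\pm1$) and on its complement by the defining property of $\Phi'$ (using that the inverse of a root of unity is its complex conjugate), while $\Delta(\Phi)=\Phi$ holds precisely when $\overline{\Eig(\Phi)}=\Eig(\Phi)$, i.e. when $\Eig(\Phi)=\pm1$. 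Applying $(\star)$ twice shows $\Delta^2=\mathrm{id}$ --- the element of degree $\Deg(\Phi)$ and eigenvalue $\overline{\overline{\Eig(\Phi)}}=\Eig(\Phi)$ is again $\Phi$ --- so $\Delta$ is an involutory permutation. Uniqueness among permutations obeying (i) and (ii) is then forced: any such permutation fixes $\{\Eig=\pm1\}$ by (ii) and is pinned down on the complement by (i) together with $(\star)$.

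For the remaining assertions I would argue directly from the tabulated eigenvalues. When $(W,S)$ is classical of type $A_n$, $BC_n$, or $D_n$, every Frobenius eigenvalue is $\pm1$ (see \cite{L} and \cite[\S13.9]{C}), so (ii) forces $\Delta=\mathrm{id}$; conversely, in each non-classical type --- $F_4$, $E_6$, $E_7$, $E_8$, $G_2$ (via \cite[Theorem~11.2]{L} or CHEVIE \cite{CHEVIE}), $H_3$ and $H_4$ (via \cite{L_app,Malle74} or CHEVIE), and $I_2(m)$ for $m\geq5$ (via Section~\ref{i2-unipotent}) --- at least one unipotent character has Frobenius eigenvalue outside $\{1,-1\}$, whence $\Delta\neq\mathrm{id}$. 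Finally $\Eig(\Phi)=1$ for every $\Phi\in\Irr(W)$ by the conventions of Section~\ref{uch-sect}, so (ii) shows $\Delta$ fixes all of $\Irr(W)$ in every type.

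The main obstacle is the verification of $(\star)$: it is vacuous on singleton families and immediate in the classical types, but in the exceptional crystallographic types and in $H_4$ the families are large, and one must check that no two distinct unipotent characters share both their generic degree polynomial and a common non-real Frobenius eigenvalue. I expect this to require a direct pass through the family data in \cite[\S13.9]{C} and CHEVIE. A structurally cleaner route to \emph{existence} of $\Delta$ would be to observe that within a Lusztig family, parametrized by a set $\mathcal{M}(\Gamma)$ of pairs $(x,\sigma)$, the operation $(x,\sigma)\mapsto(x,\bar\sigma)$ visibly preserves the generic degree and inverts the Frobenius eigenvalue; but establishing \emph{uniqueness} still comes down to the same injectivity check.
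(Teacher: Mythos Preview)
The paper does not actually supply a proof of this proposition: immediately before stating it, the author simply remarks that the fact ``derives easily from the descriptions we cited in the previous section,'' and then records the proposition ``for future reference.'' Your proposal carries out precisely that implicit verification --- reading $\Delta$ off the tabulated data $(\Deg,\Eig)$ for $\Uch(W)$ and isolating the one nontrivial point $(\star)$ as a uniqueness check within families --- so your approach is exactly what the paper intends. The alternative structural route you mention at the end, via $(x,\sigma)\mapsto(x,\bar\sigma)$ on $\sM(\Gamma)$, is likewise the description the paper adopts later in Observation~\ref{eig-obs} and Lemma~\ref{fd-ex-lem}.
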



To define our second involution $j$, 
we recall that a polynomial $f(x)$ is \emph{palindromic} if there exists a nonnegative integer $c \in \NN$ such that $f(x) = x^c\cdot f(x^{-1})$. In particular, the zero polynomial is palindromic.
As noted
 by Opdam \cite{opdam},  the fake degree of nearly every $\Phi \in \Uch(W)$ is palindromic and  reflecting the coefficients   of $\FakeDeg(\Phi)$ about a certain central power of $x$ gives rise to a well-defined and meaningful involution of $\Uch(W)$.  (For Weyl groups $W$, this was first observed by Beynon and Lusztig \cite{LuBe}.)
To explain what we mean,  define a constant   $N_\Phi$ for each $\Phi \in \Uch(W)$ by the formula
\[ \barr{c} N_\Phi = \tfrac{1}{\Phi(1)} \sum_r \Phi(r)\text{ for $\Phi \in \Irr(W)$}
\qquad
\text{and}
\qquad
N_\Phi = 0\text{ for $\Phi \in \Uch(W)\setminus \Irr(W)$},
\earr
\] where the  sum on the left is over all reflections $r \in W$ (i.e., those $r \in W$ conjugate to elements of the generating set $S$). 
Also, let $N$ denote the total number of reflections in $W$.  
The following proposition now summarizes several observations of
Opdam \cite[Page 448]{opdam}.  (In comparing this proposition to   Opdam's paper, the reader should note  the following misprint:  the exponent of $T$ on the right-hand side of  \cite[Eq.\ (2)]{opdam} should be $N_\tau-N$ and not $N - N_\tau$.)
 
\begin{proposition}[See Opdam \cite{opdam}] \label{j-prop}
For each finite, irreducible Coxeter system $(W,S)$, there exists a \emph{unique} involutory permutationion $j$ of $\Uch(W)$ 
with the following properties:
\begin{enumerate}

\item[(i)]  The fake degree of $j(\Phi)$ is $x^{N-N_\Phi} \cdot \FakeDeg(\Phi)(x^{-1})$ for each $\Phi \in \Uch(W)$.

\item[(ii)] $j$ fixes $\Phi \in \Uch(W)$ if and only if $\FakeDeg(\Phi)$ is palindromic.

\end{enumerate}
The involution $j$ is the identity permutation if and only if $(W,S)$ is  \emph{not} of type $E_7$, $E_8$, $H_3$, or $H_4$. Furthermore, in all types, $j$ fixes all elements of $\Uch(W)\setminus \Irr(W)$.
\end{proposition}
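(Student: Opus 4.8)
The plan is to follow Opdam's approach \cite{opdam}, organized around the ``$\Gamma$-polynomial'' or equivalently the functional equation satisfied by the fake degrees. First I would recall from Opdam's work that for each $\Phi \in \Irr(W)$ the fake degree polynomial $\FakeDeg(\Phi)$ satisfies, up to a correction term, the palindromic relation $x^{N-N_\Phi}\cdot\FakeDeg(\Phi)(x^{-1}) = \FakeDeg(\Phi')(x)$ for a \emph{unique} $\Phi' \in \Irr(W)$; this is exactly \cite[Eq.\ (2)]{opdam} (with the misprint corrected so that the exponent of $T$ reads $N_\tau - N$). Setting $j(\Phi) = \Phi'$ for $\Phi \in \Irr(W)$ and $j(\Phi) = \Phi$ for $\Phi \in \Uch(W)\setminus\Irr(W)$ gives a candidate permutation; property (i) is then immediate on $\Irr(W)$ by construction, and trivially true on $\Uch(W)\setminus\Irr(W)$ since there $\FakeDeg(\Phi) = 0$ and $N_\Phi = 0$, so $x^{N}\cdot 0 = 0$. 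The key point to verify is that the map $\Phi \mapsto \Phi'$ on $\Irr(W)$ is \emph{well-defined} — i.e.\ that there really is a unique $\Phi'$ whose fake degree equals $x^{N-N_\Phi}\FakeDeg(\Phi)(x^{-1})$ — and that it is an \emph{involution}; both follow from Opdam's identity together with the fact that distinct irreducible characters of $W$ have distinct fake degrees when paired with their $N_\Phi$-data (or, more robustly, by arguing directly that applying the relation twice returns $x^{N-N_{\Phi'}}\FakeDeg(\Phi')(x^{-1})$ back to $\FakeDeg(\Phi)$, since $N_{\Phi'} = N - N_\Phi + (\text{degree shift})$ is forced to be consistent). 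Since this construction visibly produces the unique permutation satisfying (i) — any two permutations satisfying (i) must agree, because (i) pins down the image of every $\Phi$ by an explicit polynomial identity that has at most one solution in $\Uch(W)$ — uniqueness is free once well-definedness is established.

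Next I would establish property (ii). If $\FakeDeg(\Phi)$ is palindromic, say $\FakeDeg(\Phi)(x) = x^c\FakeDeg(\Phi)(x^{-1})$, then one checks $c = N - N_\Phi$ (using the known relation between the top degree, bottom degree, and the $b$- and $b^*$-invariants of $\Phi$, together with the definition of $N_\Phi$ as the average reflection value scaled by $\Phi(1)$), so $x^{N-N_\Phi}\FakeDeg(\Phi)(x^{-1}) = \FakeDeg(\Phi)(x)$ and hence $j(\Phi) = \Phi$ by uniqueness of the image in (i). Conversely, if $j(\Phi) = \Phi$ then (i) reads $\FakeDeg(\Phi)(x) = x^{N-N_\Phi}\FakeDeg(\Phi)(x^{-1})$, which is precisely the assertion that $\FakeDeg(\Phi)$ is palindromic. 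For $\Phi \in \Uch(W)\setminus\Irr(W)$ we have $\FakeDeg(\Phi) = 0$, which is palindromic by convention, consistent with $j(\Phi) = \Phi$; this also proves the final sentence that $j$ fixes all of $\Uch(W)\setminus\Irr(W)$.

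Finally, the classification of when $j$ is the identity. Since $j$ is trivial on $\Uch(W)\setminus\Irr(W)$, $j = \mathrm{id}$ iff every fake degree $\FakeDeg(\Phi)$, $\Phi \in \Irr(W)$, is palindromic. For $W$ classical (types $A_n$, $BC_n$, $D_n$) this is a direct check from the explicit product formulas for fake degrees in \cite[\S13.8]{C} — or it follows since all irreducible characters of these groups are rational-valued and the coinvariant algebra pairing is symmetric. For the exceptional and non-crystallographic types, I would cite the tables (e.g.\ \cite{LA,LA2,LuBe} and CHEVIE \cite{CHEVIE}): in types $A_n$, $BC_n$, $D_n$, $E_6$, $F_4$, $G_2$, $I_2(m)$ every fake degree is palindromic, while in each of $E_7$, $E_8$, $H_3$, $H_4$ there is at least one irreducible character whose fake degree fails to be palindromic (and correspondingly a non-trivial $j$-pairing), exactly as recorded by Beynon–Lusztig \cite{LuBe} for Weyl groups and by Opdam \cite{opdam} in general.

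The main obstacle, and the only step requiring genuine care rather than bookkeeping, is confirming that the constant $c$ in any palindromic relation for $\FakeDeg(\Phi)$ necessarily equals $N - N_\Phi$ — i.e.\ matching Opdam's degree-shift exponent with the reflection-average quantity $N_\Phi$. This is where the corrected sign in \cite[Eq.\ (2)]{opdam} matters, and one has to be attentive to the convention (trivial vs.\ sign character, and whether $N_\Phi$ counts all reflections or only those in $S$). Everything else is either a formal consequence of the uniqueness built into property (i) or a finite verification against published tables.
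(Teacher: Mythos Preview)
Your overall strategy---quote Opdam's functional equation to build $j$ on $\Irr(W)$, extend by the identity on $\Uch(W)\setminus\Irr(W)$, then verify (ii) and the type-by-type classification from tables---matches what the paper does (the paper simply cites Opdam and records the explicit nontrivial pairs). However, your uniqueness argument has a real gap.

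You claim that ``any two permutations satisfying (i) must agree, because (i) pins down the image of every $\Phi$ by an explicit polynomial identity that has at most one solution in $\Uch(W)$.'' This is false: property (i) determines only $\FakeDeg(j(\Phi))$, not $j(\Phi)$ itself, and fake degrees do not separate elements of $\Uch(W)$. Every $\Phi \in \Uch(W)\setminus\Irr(W)$ has $\FakeDeg(\Phi)=0$, so (i) alone permits $j$ to act as \emph{any} permutation of this set; and even inside $\Irr(W)$ there are distinct irreducible characters with equal fake degrees (for instance $\chi^{\{\alpha\},1}$ and $\chi^{\{\alpha\},2}$ in type $D_n$, or $\Phi'_{(0,m/2)}$ and $\Phi''_{(0,m/2)}$ in type $I_2(m)$ with $m$ even). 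In all such cases the common fake degree happens to be palindromic, so (i) would also allow $j$ to swap the two characters. It is precisely condition (ii) that rules this out. Thus uniqueness genuinely requires \emph{both} (i) and (ii), and the argument should be rewritten accordingly: first use (i) to determine $j$ on the (finitely many, explicitly listed) characters with non-palindromic fake degree---here one does need the finite check that no third character shares the target fake degree---and then invoke (ii) to force $j(\Phi)=\Phi$ on everything else.

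A smaller point: your parenthetical ``distinct irreducible characters of $W$ have distinct fake degrees when paired with their $N_\Phi$-data'' is also not literally true (same $D_n$ example: the two degenerate characters have identical fake degrees and identical $N_\Phi$), so the well-definedness of $\Phi\mapsto\Phi'$ on $\Irr(W)$ likewise cannot rest on that claim. It comes instead from Opdam's actual construction of the permutation (via an intrinsic operation on representations, not by matching polynomials), or equivalently from the explicit case analysis in \cite{LuBe,opdam}.
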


\begin{remark} Under the assumption that $W$ is a Weyl group, this result first appeared as Proposition A in  Beynon and Lusztig's paper \cite{LuBe}.  
There is also another interpretation of the involution $j$ in terms of  rationality properties of the corresponding
   characters of  cyclotomic Hecke algebras; see \cite[Theorem 6.5]{Malle_Rationality}.
\end{remark}

Even in types $E_7$, $E_8$, $H_3$, and $H_4$, the permutation $j$ is very nearly the identity.
In particular, adopting Carter's notation for the elements of $\Irr(W)$ (see our explanations in Sections \ref{h3-sect} and \ref{h4-sect}), we may describe the nontrivial actions of  $j$  on $\Uch(W)$:   
\begin{enumerate}
\item[$\bullet$] \textbf{Type $E_7$.} $j$ exchanges  $\phi_{512,11}$ with $\phi_{512,12}$.
\item[$\bullet$]
\textbf{Type $E_8$.} $j$ exchanges
$\phi_{4096,11}$ with $\phi_{4096,12}$ and $\phi_{4096,26}$ with $\phi_{4096,27}$.

\item[$\bullet$] \textbf{Type $H_3$.} $j$ exchanges $\phi_{4,3}$ with $\phi_{4,4}$.

\item[$\bullet$] 
\textbf{Type $H_4$.} $j$ exchanges $\phi_{16,3}$ with $\phi_{16,6}$ and $\phi_{16,18}$ with $\phi_{16,21}$.

\end{enumerate}
One consequence of this description is that $\Phi \in \Irr(W)$ is special if and only if $j(\Phi \otimes \sgn)$ is special, a fact which Carter notes as \cite[Corollary 11.3.10]{C} when $W$ is crystallographic. 
Moreover, as Beynon and Lusztig  observed in \cite{LuBe}, the irreducible characters $\Phi \in \Irr(W)$ with $j(\Phi) \neq \Phi$ are precisely the characters corresponding to irreducible representations of the corresponding Hecke algebra of $W$ which are not rational. For more on this property, see also \cite[Section 11.3]{C} and \cite{L_Benson}.

\subsection{Families in $\Uch(W)$}\label{family-sect}

The set $\Uch(W)$ possesses a distinguished decomposition into disjoint subsets called \emph{families}.
When $(W,S)$ is crystallographic,
these arise as the equivalence classes 
 of a certain  relation (see \cite[Section 12.3]{C}), but in general the families are defined heuristically (see \cite{L_app}).
To specify the Fourier transform matrix of $\Uch(W)$ 
it suffices to attach  Fourier transform matrices to each of the families, 
and 
we therefore  discuss some of their significant properties here.

To begin, each family $\cF\subset \Uch(W)$  contains a unique special element $\Phi$$-$\emph{special}, we recall from the introduction,  means that there exists an integer $e \in \NN$ and nonzero real numbers $a,b$ such that 
\[ \FakeDeg(\Phi) = ax^e + \text{higher order terms}\qquad\text{and}\qquad
\Deg(\Phi) = bx^e + \text{higher order terms}.\]
 Since $\FakeDeg(\Phi) = 0$ if $\Phi \notin \Irr(W)$,  each special $\Phi$ necessarily belongs to the subset of irreducible characters $\Irr(W)\subset \Uch(W)$.  
 If $\Phi$ is the unique special element of a family $\cF$ and $e$ is defined as above, then $x^e$ is also the largest power of $x$ dividing $\Deg(\Psi)$ for all other $\Psi \in \cF$.
 Furthermore,
 if $x^{e'}$ is the largest power of $x$ dividing $\FakeDeg(\Psi)$ for some $\Psi \in \cF\cap \Irr(W)$, then $e< e'$ unless $\Phi = \Psi$.

 From this discussion we see that each family $\cF$ in $\Uch(W)$ has a nonempty intersection with $\Irr(W)$. Thus the division of $\Uch(W)$ into families induces a similar partition of $\Irr(W)$ into families.
 The family decomposition of $\Irr(W)$, in contrast to that of $\Uch(W)$, has a simple  definition in terms of the two-sided cell representations of $W$ which applies in all  types.  Namely, a family in $\Irr(W)$ consists of the characters appearing as constituents of two-sided cells having the same special constituent; see \cite[\S12.4]{C} for a detailed explanation. 

Let $\Delta$ and $j$ be the involutions of $\Uch(W)$ defined in Propositions \ref{cmplx-prop} and \ref{j-prop} above.
Every family in $\Uch(W)$ is preserved by both of these permutations, and we make the following definition concerning the action of $j$ on a family.

\begin{definition}\label{ex-def}
An element of $\Uch(W)$ is \emph{exceptional} if it is not fixed by the involution $j$, or equivalently if its fake degree is not palindromic.  (All such characters are listed at the end of Section \ref{invol-sect}.)
A family in $\Uch(W)$ is \emph{exceptional} if any of its elements are exceptional.
\end{definition}

\begin{remark}
This notion of exceptionality  originates in Beynon and Lusztig's paper \cite{LuBe} and has since become a standard definition, appearing in various places \cite{Kottwitz,opdam}. 
All exceptional families have size 4, and they only occur in types $E_7$, $E_8$, $H_3$, and $H_4$.
In types $E_7$ and $H_3$, $\Uch(W)$ contains exactly one exceptional family, while in types $E_8$ and $H_4$, $\Uch(W)$ contains two exceptional families.
\end{remark}

 

As Lusztig first observed \cite{L}, a single construction provides an extremely convenient way of parametrizing nearly every family in $\Uch(W)$.
Given a finite group $\Gamma$, let 
$C_\Gamma(x) = \{ g \in \Gamma : gxg^{-1} = x\}$ denote the centralizer of an element $x \in \Gamma$, and define $\sM(\Gamma)$ as the set of equivalence classes of pairs $(x,\sigma)$ for $x \in \Gamma$ and $\sigma \in \Irr(C_\Gamma(x))$, with respect to the relation 
\be(x,\sigma) \sim (gxg^{-1}, \sigma^g)\qquad\text{for $g \in \Gamma$,}\label{MGamma}.\ee  
Here $\sigma^g$ denotes the character of $C_\Gamma(gxg^{-1})$ with the formula $z \mapsto \sigma(g^{-1} z g)$.
Apart from one family in type $H_4$ and one family in type $I_2(m)$, each family $\cF\subset \Uch(W)$ is naturally in bijection with a set $\sM(\Gamma)$ for some 
 finite group $\Gamma$, given either by a product of 2-element cyclic groups  or a symmetric group. 
In particular, each  $\cF$ has size
\[ 1, \quad 8, \quad 21, \quad 39, \quad 74, \quad 2^{2k}, \quad k^2, \quad\text{or}\quad k^2+k+2\qquad\text{(where $k$ is a positive integer)},\]
and the families of size 74, $k^2$, and $k^2+k+2$ occur only in types $H_4$, $I_2(2k+1)$, and $I_2(2k+2)$, respectively.

If $(W,S)$ has one of the classical types $A_n$, $BC_n$, or $D_n$, then the families in $\Uch(W)$ each correspond to  sets $\sM\((\ZZ/2\ZZ)^k\)$
for various integers $k\geq 0$. If $(W,S)$ has  type $E_6$, $E_7$, $E_8$, $F_4$, or $G_2$,
 then the families in $\Uch(W)$  each correspond to $\sM(S_k)$ for some $k \in \{1,2,3,4,5\}$.
A detailed list of the families $\cF \subset \Uch(W)$ when $(W,S)$ is crystallographic, alongside the corresponding bijections $\cF \leftrightarrow \sM(\Gamma)$,  appears in \cite[\S13.8 and \S13.9]{C}.
With respect to these correspondences, the following observation is a consequence of \cite[Theorem 11.2]{L}:

\begin{observation}\label{eig-obs} Suppose $(W,S)$ crystallographic and $\cF\subset \Uch(W)$ is a family parametrized as in \cite[\S13.8 and \S13.9]{C} by the set $\sM(\Gamma)$ 
for a finite group $\Gamma$.  Let $\Phi \in \cF$ be the element corresponding to $(x,\sigma) \in \sM(\Gamma)$.
\begin{enumerate}
\item[(a)] If $\cF$ is not exceptional then $\Eig(\Phi) = \frac{\sigma(x)}{\sigma(1)}$ and $\Delta(\Phi)$ is the unique element of $\cF$ corresponding to the equivalence class $(x,\overline \sigma) \in \sM(\Gamma)$.

\item[(b)] If $\cF$ is exceptional then $\Gamma = S_2$ and $\Eig(\Phi) =1$ if $x=1$ and $\Eig(\Phi) = \sigma(x) \cdot i$ otherwise. 
In this case, $\Delta$ fixes the elements of $\cF$ with Frobenius eigenvalue 1, and exchanges the two elements with Frobenius eigenvalue $\pm i$.
\end{enumerate}
Furthermore, in either case $\Phi$ is special if and only if $(x,\sigma) = (1,\One)$, where $\One$ denotes  the principal character of $\Gamma$.
\end{observation}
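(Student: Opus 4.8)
The plan is to deduce the observation from \cite[Theorem 11.2]{L} together with the characterization of the involution $\Delta$ given in Proposition \ref{cmplx-prop}; the argument is essentially bookkeeping, translating Lusztig's classification of Frobenius eigenvalues into the language of the parametrization of families fixed in \cite[\S13.8 and \S13.9]{C}.

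I would first dispatch the formulas for $\Eig(\Phi)$. Under the bijection $\cF \leftrightarrow \sM(\Gamma)$, the root of unity attached to $\Phi$ in \cite[Chapter 11]{L} is by construction the one quoted in the observation: for a non-exceptional family it is $\sigma(x)/\sigma(1)$, and for a size-$4$ exceptional family (where necessarily $\Gamma = S_2$) it is $1$ when $x = 1$ and $\sigma(x)\cdot i$ when $x \neq 1$. It is worth remarking that $\sigma(x)/\sigma(1)$ is automatically a root of unity: since $x$ is central in $C_\Gamma(x)$, it acts as a scalar on any irreducible $\CC C_\Gamma(x)$-module, and that scalar is $\sigma(x)/\sigma(1)$. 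The only thing to check is that the normalization of $\Eig$ adopted in this paper (via \cite[Chapter 11]{L}, as tabulated in \cite[\S13.9]{C} and output by the \texttt{UnipotentCharacters} command of \cite{CHEVIE}) agrees with Lusztig's; this is a direct comparison of conventions.

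For the statements about $\Delta$, I would appeal to the uniqueness clause of Proposition \ref{cmplx-prop}: $\Delta$ is the \emph{only} involution of $\Uch(W)$ that preserves every $\Deg(\Phi)$, inverts every $\Eig(\Phi)$, and fixes $\Phi$ exactly when $\Eig(\Phi) = \pm 1$. So it suffices to exhibit an involution with these three properties and match it with what the observation describes, and since every family is preserved by $\Delta$ one may work one family at a time. On a non-exceptional family I would use $\delta_\cF \colon (x,\sigma)\mapsto(x,\overline{\sigma})$, which is a well-defined involution of $\sM(\Gamma)$ because complex conjugation of characters commutes with the $\Gamma$-conjugation action defining the equivalence relation \eqref{MGamma}. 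It inverts the Frobenius eigenvalue because $\overline{\sigma}(x)/\overline{\sigma}(1) = \overline{\sigma(x)/\sigma(1)} = \big(\sigma(x)/\sigma(1)\big)^{-1}$, the last equality holding because the number lies on the unit circle; it preserves generic degrees, as one reads off the explicit degree formulas in \cite{L} and \cite[\S13.8 and \S13.9]{C} (equivalently, from property (P1), since the vector of fake degrees of $\cF$ and the block of $\FT$ on $\cF$ are both real and are carried to themselves by the permutation of $\sM(\Gamma)$ induced by $\sigma\mapsto\overline{\sigma}$); and $\delta_\cF$ fixes $(x,\sigma)$ iff $\overline{\sigma}=\sigma$, which, for each of the finitely many pairs $(\Gamma,x)$ occurring here (where $\Gamma$ is a product of copies of $\ZZ/2\ZZ$ or a symmetric group $S_k$ with $k\le 5$), is equivalent to $\sigma(x)/\sigma(1)=\pm1$. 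Hence $\delta_\cF = \Delta$ on $\cF$, giving part (a). The exceptional families are handled by inspection: there $\sM(S_2) = \{(1,\One),(1,\veps),(x,\One),(x,\veps)\}$ with eigenvalues $1,1,i,-i$ and equal generic degrees for $(x,\One)$ and $(x,\veps)$ (from \cite[\S13.9]{C} or \cite{L}), so the unique degree-preserving, eigenvalue-inverting involution fixing the first two elements is the transposition $(x,\One)\leftrightarrow(x,\veps)$, which is the content of part (b); note that this is \emph{not} the map $\sigma\mapsto\overline{\sigma}$, since $\overline{\One}=\One$.

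Finally, the claim that $\Phi$ is special iff $(x,\sigma)=(1,\One)$ is built into Lusztig's construction: as recalled in Section \ref{family-sect}, the special member of $\cF$ is the unique one whose generic degree and fake degree have (up to a scalar) the same lowest-order term, and Lusztig's parametrization of $\cF$ by $\sM(\Gamma)$ is arranged so that this is the element $(1,\One)$ — alternatively one simply points to the tables of \cite[\S13.8 and \S13.9]{C}, where the special characters are marked. I expect the main obstacle to be purely organizational: reconciling Lusztig's normalization of the Frobenius eigenvalues and of the Fourier matrix with the tables in \cite{C} and the output of \cite{CHEVIE}, and carrying out the small but slightly fiddly verification that $(x,\sigma)\mapsto(x,\overline{\sigma})$ really is the involution singled out by Proposition \ref{cmplx-prop} on every non-exceptional family; no step is conceptually deep.
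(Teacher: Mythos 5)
Your proposal is correct and follows the same route the paper takes: the paper offers no argument beyond asserting that the observation "is a consequence of \cite[Theorem 11.2]{L}" under the parametrization of \cite[\S13.8 and \S13.9]{C}, and your write-up simply makes that bookkeeping explicit, including the sensible device of identifying $\Delta$ on each family via the uniqueness clause of Proposition \ref{cmplx-prop} and the correct warning that on exceptional families $\Delta$ is the transposition $(x,\One)\leftrightarrow(x,\sgn)$ rather than $\sigma\mapsto\overline{\sigma}$. The finitely many case checks you flag (e.g.\ that $\sigma(x)/\sigma(1)=\pm1$ forces $\overline{\sigma}=\sigma$ for the centralizers actually occurring) do all go through.
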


Carter's book \cite{C} does not similarly address the non-crystallographic case; however,  
in these types, only a small number of families exist. A description of these families appears in Lusztig's papers papers \cite{L_exotic,L_app}, 
which we may 
 summarize as follows:
\begin{enumerate}

%
%
%
%

\item[$\bullet$] \textbf{Type $H_3$.} There are 7 families: 4 of size 1 and 3 of size 4.

\item[$\bullet$] \textbf{Type $H_4$.} There are 13 families: 6 of size 1, 6 of size 4, and 1 of size 74.

\item[$\bullet$] \textbf{Type $I_2(m)$.} There are 3 families:   they are described by (\ref{i2fam-def}) below.

\end{enumerate}
One can  find an exact parametrization of the families in 
types $H_3$ and $H_4$ (and also for the crystallographic exceptional types) in the  computer algebra system CHEVIE \cite{CHEVIE}. 
We will discuss these families at greater length in Section \ref{h-ft-sect}.

We close this subsection by considering the related combinatorial problem of counting the   elements of  $\sM(\Gamma)$. If $\Gamma$ is abelian, then  $\sM(\Gamma)=\Gamma \times \Irr(\Gamma)$ and we have $|\sM(\Gamma)| = |\Gamma|^2$.  
If $\Gamma$ is a symmetric group, then we have the following less trivial result.

\begin{proposition}\label{torics-prop}
Let $a_n = |\sM(S_n)|$ and write $\sigma(n)$ for the sum of the positive divisors of a positive integer $n$. The ordinary generating function of the sequence $\{a_n \}_{n=1}^\infty$ then satisfies
\[ 1 + \sum_{n\geq 1} a_n x^n = \prod_{n=1}^\infty \frac{1}{(1-x^n)^{\sigma(n)}}.\]  Hence, in the language of \cite{BerSlo}, $\{a_n\}_{n=1}^\infty$ is the Euler transform of the sequence $\{ \sigma(n)\}_{n=1}^\infty$.
\end{proposition}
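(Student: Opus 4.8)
The plan is to establish the product formula by counting $|\sM(S_n)|$ in a way that factors naturally over cycle types. Recall that $\sM(S_n)$ is the set of pairs $(x,\sigma)$ with $x \in S_n$ and $\sigma \in \Irr(C_{S_n}(x))$, taken up to simultaneous conjugacy. Since conjugacy classes of pairs are indexed by a conjugacy class representative $x$ together with an irreducible character of its centralizer (characters being class functions, the residual $C_{S_n}(x)$-conjugation on $\Irr(C_{S_n}(x))$ is trivial), we get immediately
\[
|\sM(S_n)| = \sum_{[x]} |\Irr(C_{S_n}(x))| = \sum_{[x]} k(C_{S_n}(x)),
\]
where the sum is over conjugacy classes $[x]$ in $S_n$ and $k(G)$ denotes the number of conjugacy classes (equivalently irreducible characters) of a finite group $G$.

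Next I would use the classical description of centralizers in the symmetric group: if $x$ has cycle type $1^{m_1} 2^{m_2} \cdots$ (so $\sum_i i\, m_i = n$), then
\[
C_{S_n}(x) \;\cong\; \prod_{i \geq 1} \bigl(\ZZ/i\ZZ \wr S_{m_i}\bigr) \;=\; \prod_{i\geq 1} \bigl((\ZZ/i\ZZ)^{m_i} \rtimes S_{m_i}\bigr).
\]
Since $k$ is multiplicative over direct products, $k(C_{S_n}(x)) = \prod_i k(\ZZ/i\ZZ \wr S_{m_i})$. The key subclaim is then a generating-function identity for the wreath product: for a fixed cyclic group $\ZZ/i\ZZ$,
\[
\sum_{m \geq 0} k\bigl(\ZZ/i\ZZ \wr S_m\bigr)\, t^m \;=\; \prod_{m\geq 1} \frac{1}{(1-t^m)^{i}}.
\]
This follows from the standard fact (Macdonald) that for any finite group $A$ the number of conjugacy classes of $A \wr S_m$ equals the number of functions from the set of conjugacy classes of $A$ to partitions with total size $m$; equivalently $\sum_m k(A\wr S_m) t^m = \prod_{m\geq 1}(1-t^m)^{-k(A)}$, and here $k(\ZZ/i\ZZ) = i$.

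Assembling these pieces, I would write
\[
1 + \sum_{n\geq 1} a_n x^n
= \sum_{(m_i)_{i\geq 1}} \prod_{i\geq 1} k\bigl(\ZZ/i\ZZ\wr S_{m_i}\bigr)\, x^{i m_i}
= \prod_{i\geq 1}\left( \sum_{m\geq 0} k\bigl(\ZZ/i\ZZ\wr S_m\bigr)\, (x^i)^m \right)
= \prod_{i\geq 1}\prod_{m\geq 1} \frac{1}{(1-x^{im})^{i}},
\]
where the first equality groups conjugacy classes of $S_n$ by cycle type and uses $\sum_i i m_i = n$. Finally I would reindex the double product by $n = im$: the exponent of $(1-x^n)^{-1}$ becomes $\sum_{i \mid n} i = \sigma(n)$, giving $\prod_{n\geq 1}(1-x^n)^{-\sigma(n)}$ as claimed; the identification with the Euler transform of $\{\sigma(n)\}$ is then just the definition. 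The main obstacle is the wreath-product subclaim $\sum_m k(A\wr S_m)t^m = \prod_{m\geq 1}(1-t^m)^{-k(A)}$; I would either cite Macdonald's combinatorial parametrization of conjugacy classes of $A \wr S_m$ (a class is a choice, for each conjugacy class of $A$, of a partition, with the sizes summing to $m$) or give a one-line proof via the exponential/cycle-index formalism, after which everything else is bookkeeping.
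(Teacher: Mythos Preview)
Your proof is correct and follows essentially the same route as the paper: both reduce $a_n$ to $\sum_{\lambda\vdash n} k(C_\lambda)$ with $C_\lambda\cong\prod_r (\ZZ/r\ZZ)\wr S_{m_r}$, then use the parametrization of conjugacy classes of $A\wr S_m$ by $k(A)$-colored partitions of $m$. The only difference is cosmetic: the paper packages this into an explicit bijection with a set $\cP_n$ of labeled partitions of $n$ (parts of size $k$ carrying a label $(d,i)$ with $d\mid k$, $i\in[d]$) and reads off the product directly, whereas you compute the generating function for each wreath-product factor and then reindex the double product by $n=im$ to collect the exponent $\sigma(n)$.
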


\begin{remark}
The sequence $\{a_n\}_{n=1}^\infty =  (1, 4, 8, 21, 39, 92, 170, 360, \dots)$ appears as \cite[A061256]{OEIS}.
\end{remark}

\begin{proof}

If $C_\lambda$ denotes the centralizer in $S_n$ of a permutation with cycle type $\lambda$, then  $a_n$ is equal to sum over all partitions $\lambda$ of $n$ of the number of conjugacy classes of $C_{\lambda}$.
It is not difficult to check that if the partition $\lambda$ has $m_r$ parts of size $r$, then $C_\lambda \cong \prod_{r\geq 1} G(r,m_r)$, where $G(r,n) = (\ZZ/r \ZZ) \wr S_n$ denotes the wreath product of a cyclic group of order $r$ with $S_n$.  

The conjugacy classes of $G(r,n)$ are indexed by partitions of $n$ whose parts are labeled by numbers $i \in [r]$. 
Given a sequence $(\mu_r)_{r\geq 1}$ of such labeled partitions indexing a conjugacy class in  
$C_\lambda \cong\prod_{r\geq 1} G(r,m_r)$, 
modify the $r^{\mathrm{th}}$ partition $\mu_r$ by multiplying its parts by $r$ and replacing all labels $i$ by  ordered pairs $(r,i)$. Concatenating the parts of these modified partitions 
 yields a  map from the set of conjugacy classes 
 of $C_\lambda$ to the set  $\cP_n$ of 
  partitions  of $n$ whose parts of size $k$ are labeled by pairs $(d,i)$ where $d$ is a divisor of $k$ and $i \in [d]$.  
  Conversely,  any  $\nu \in \cP_n$ determines a sequence  of labeled partitions 
 $(\mu_r)_{r \geq 1}$ which indexes a conjugacy class of some $C_\lambda$: namely, take $\mu_r$ to be the labeled partition given by  dividing by $r$ 
  all parts of $\nu$ which are labeled by pairs of the form $(r,i)$.
  These operations  determine a bijection from $\cP_n$ to  the disjoint union  of the sets of conjugacy classes of $C_\lambda$ over all partitions $\lambda$ of $n$, and thus $a_n  = |\cP_n|$. 
 
 We can view $\cP_n$ as the set of 
 partitions of $n$ whose parts of size $k$ can have $\sigma(k)$ different types.
The cardinality of this set is   what is counted by 
the coefficient of $x^n$ in the right-hand generating function above, essentially by construction
(see \cite{BerSlo}), which completes the proof.
\end{proof}

  \subsection{Unipotent characters in type $I_2(m)$}
  \label{i2-unipotent}

It seems difficult to find a single reference which provides all of the  data  our setup requires  to construct $\Uch(W)$ in type $I_2(m)$. We therefore briefly include the relevant details here, with   
the papers \cite{L_exotic,L_app,Malle_Imprimitiver} serving as our primary references.

 Fix $m\geq 3$ and let $(W,S)$ be of type $I_2(m)$.   The set $\Uch(W)$ then consists of the objects   \begin{enumerate}
  \item[] $\Phi_{(0,j)}$ for  integers $j$ with $0<j < \frac{m}{2}$,
  \item[]  $\Phi_{(i,j)}$ for  integers $i,j$ with $0 < i<j<i+j < m$,
  \end{enumerate}
  together with the additional objects \[\begin{cases} \text{$\One$, $\sgn$},&\text{if $m$ is odd} \\
\text{$\One$, $\sgn$, $\Phi_{(0,\frac{m}{2})}'$, $\Phi_{(0,\frac{m}{2})}''$},&\text{if $m$ is even.}\end{cases}\]
Observe that $\Uch(W)$ has cardinality  $k^2+2$  if $m = 2k+1$ is odd or $k^2-k+4$ if $m=2k$ is even.

 Let $\xi = \exp\(\frac{2\pi \sqrt{-1}}{m}\)$ be the standard primitive $m^{\mathrm{th}}$ root of unity.
The fake degrees and degrees for $\Uch(W)$ are then defined by
\begin{enumerate}

\item[] $\FakeDeg\(\Phi_{(i,j)}\) = \begin{cases} x^j + x^{m-j},&\text{if }i=0, \\ 0,&\text{otherwise},\end{cases}$



\item[] $\FakeDeg\(\Phi_{(0,\frac{m}{2})}'\) = \FakeDeg\(\Phi_{(0,\frac{m}{2})}''\) = x^{m/2}$,

\end{enumerate}
and
\begin{enumerate}

\item[] $\Deg\(\Phi_{(i,j)}\) = \frac{\xi^i + \xi^{m-i} - \xi^j - \xi^{m-j}}{m} \cdot x \cdot \frac{(x-1)(x+1) \prod_{k\in [m]} (x-\xi^k)}{(x-\xi^i)(x-\xi^{m-i})(x-\xi^j)(x-\xi^{m-j})}$,
 

\item[] $\Deg\(\Phi_{(0,\frac{m}{2})}'\) = \Deg\(\Phi_{(0,\frac{m}{2})}''\) = \frac{2}{m} \cdot x \cdot \prod_{0<k<\frac{m}{2}} (x-\xi^k)(x-\xi^{m-k})$,

\end{enumerate}
with  
$\FakeDeg\(\One\)=\Deg(\One) = 1$ and  $\FakeDeg\(\sgn\)=\Deg(\sgn) = x^m$. Note that in the right-hand expression for $\Deg\(\Phi_{(i,j)}\)$, the denominator of the last factor always divides its numerator, and so the degree does belong to $\RR[x]$.
The Frobenius eigenvalues have the formula
\[ \Eig(\Phi) = \begin{cases} \xi^{-ij}, &\text{if $\Phi = \Phi_{(i,j)}$ for some $(i,j)$}, \\ 1,&\text{otherwise}.\end{cases}\]
We note that in this type,  the  permutation $\Delta$ of $\Uch(W)$ defined in Proposition \ref{cmplx-prop}
fixes all elements except those of the form $\Phi_{(i,j)}$ with $i>0$, on which it acts by  
$\Delta : \Phi_{(i,j)} \mapsto \Phi_{(i,m-j)}$.

Finally,   $\Uch(W)$ always has exactly three families, given by the sets
\be\label{i2fam-def}  \{\One\},\qquad \{\sgn\},\qquad \text{and}\qquad \Uch(W)\setminus \{\One,\sgn\}.\ee
The special elements of $\Uch(W)$ are then $\One$, $\sgn$, and $\Phi_{(0,1)}$. 

\section{Decomposing $\chi_{W}$ for arbitrary Coxeter systems} \label{cmb-sect}

Kottwitz and Casselman's papers \cite{Kottwitz,Casselman} give the decomposition of $\chi_W$ when $W$ is a  Weyl group. Here we   explain the derivation of one  reformulation of Kottwitz's results for classical Coxeter systems, then describe the irreducible decomposition in the remaining non-crystallographic types.

\subsection{Kottwitz's results for classical types}\label{class-sect}

We devote this section to the derivation of Theorem \ref{thm2} in the introduction.
To begin, we recall  the following explicit constructions of  the Coxeter systems of classical type:
\begin{enumerate}

\item[$\bullet$]  \textbf{Type $A_n$.} $W$ is the symmetric group $S_{n+1}$ of permutations of $[n+1]$ and $S = \{s_1,\dots,s_n\}$ where $s_i$ is the simple transposition $(i,i+1) \in S_{n+1}$ for $i\in [n]$.

\item[$\bullet$]  \textbf{Type $BC_n$.}  $W $ is the  group of $n\times n$ generalized permutation matrices with entries in $\{-1,0,1\}$, a group  we occasionally call
$W_n$. The  set $S = \{s_1,\dots,s_{n-1},t_n\}$ consists of the permutations  $s_i \in S_n$ (viewed as matrices)  together with the  matrix $t_n = \diag(1,1,\dots,1,-1)$.

\item[$\bullet$]  \textbf{Type $D_n$.} $W$ is the subgroup of matrices in the Coxeter group of type $BC_n$ with an even number of entries equal to $-1$. The generating set  $S = \{s_1,\dots,s_{n-1},t_n'\}$ consists of the permutations  $s_i \in S_n$ (viewed as matrices)  together with the matrix $t_n' = t_n s_{n-1} t_n$.
\end{enumerate}

To refer to the irreducible representations of these groups we employ the following conventions.  First,  a \emph{partition} of an integer $n$ is a weakly decreasing sequence of  positive integers $\alpha= (\alpha_1,\alpha_2,\dots,\alpha_k)$ which sum to $n$.  For convenience we define $\alpha_i$ to be $0$ for all $i \in \PP$ exceeding $k$ and write $|\alpha|$ for the sum $\sum_{i \in \PP} \alpha_i$.
The \emph{Young diagram} of a partition $\alpha$ is the subset of $\PP\times \PP$ given by $\{ (i,j) : 1 \leq j \leq \alpha_i\}$, which we  draw in ``English notation'' as in the following example: \[\alpha = (4,2,1)\qquad\text{has Young diagram}\qquad \tableau[s]{&&& \\ & \\ \ }.\]
We  often identify a partition $\alpha$ with its Young diagram; for example, we write  $\alpha \subset \beta$ when the Young diagram of $\alpha$ is contained in that of $\beta$, and write $ \alpha\cap\beta$ for the partition whose $i^{\mathrm{th}}$ part is $\min \{ \alpha_i,\beta_i\}$. 
We denote the \emph{transpose} of $\alpha$ by $\alpha'$; recall that $\alpha'_i = |\{ j \in \PP : \alpha_j \leq i\}|$, and that consequently the Young diagram of $\alpha'$ is the transpose of the  diagram of $\alpha$.

All Coxeter groups $(W,S)$ have two linear characters given by the trivial character $\One : w \mapsto 1$ and the sign character $\sgn : w \mapsto (-1)^{\ell(w)}$. The remaining irreducible characters of the the classical Coxeter groups may be described as follows:

\begin{enumerate}
\item[$\bullet$]  \textbf{Type $A_n$.}  For each partition $\alpha $ of $ n+1$, there exists a unique irreducible character $\chi^\alpha$ of $W = S_{n+1}$
whose restrictions to the subgroups $S_{\alpha_1} \times S_{\alpha_2} \times \cdots$ and $S_{\alpha'_1} \times S_{\alpha'_2} \times \cdots$
 respectively contain the trivial character and sign character as constituents.  The characters $\chi^\alpha$ for partitions $\alpha $ of $ n+1$ are distinct and exhaust $\Irr(W)$.

\item[$\bullet$]  \textbf{Type $BC_n$.}  
%
%
If $\alpha$ is a partition then
the irreducible character $\chi^\alpha$ of $S_{|\alpha|}$  extends in two ways to characters $\chi^\alpha_+$ and $\chi^\alpha_-$ of $W_{|\alpha|}$ by the  formulas
\[\label{irr-b} \chi^\alpha_+(g) = \chi^\alpha(|g|)\qquad\text{and}\qquad 
\chi^\alpha_-(g) = \Delta(g) \chi^\alpha(|g|),\qquad\text{for }g \in W_{|\alpha|},\] where $|g|$ denotes the permutation matrix formed by replacing every entry of $g$ with its absolute value, and $\Delta(g)$ is the product of the nonzero entries of $g$.
%
%
Given an ordered pair of partitions $(\alpha,\beta)$ with $|\alpha|+|\beta| = n$ (i.e., a \emph{bipartition} of $n$),  define $\chi^{(\alpha,\beta)}$ as the  character of $W=W_n$ induced
from the character $\chi^\alpha_+ \otimes \chi^\beta_-$ of the subgroup $W_{|\alpha|}\times W_{|\beta|}$.
The characters $\chi^{(\alpha,\beta)}$ for bipartitions $(\alpha,\beta)$ of $n$ are distinct and irreducible, and exhaust  $\Irr(W)$.

\item[$\bullet$]  \textbf{Type $D_n$.} Define an \emph{unordered bipartiton} of $n$ to be a set of two \emph{distinct} partitions $\{\alpha,\beta\}$ with $|\alpha|+|\beta| = n$. If $\{\alpha,\beta\}$ is an unordered bipartition of $n$ then 
the  irreducible characters $\chi^{(\alpha,\beta)}$ and $\chi^{(\beta,\alpha)}$ of $W_n$ have the same, irreducible restriction to $W$, which we denote by $\chi^{\{ \alpha,\beta\}}$. 
If $n$ is even and $\alpha$ is a partition of $n/2$ then the restriction of $\chi^{(\alpha,\alpha)}$ to $W$ has two irreducible constituents, which we denote in no particular order by $\chi^{\{\alpha\},1}$ and $\chi^{\{\alpha\},2}$.
The characters $\chi^{\{\alpha,\beta\}}$ for unordered partitions $\{\alpha,\beta\}$ of $n$ together with  $\chi^{\{\alpha\},1}$ and $\chi^{\{\alpha\},2}$ for partitions $\alpha$ of $n/2$ exhaust the distinct elements of $\Irr(W)$.

\end{enumerate}

\begin{remark}
 Lusztig \cite[Chapter 4]{L} describes an alternate indexing set for the irreducible characters of the classical Coxeter groups,  based on equivalence classes of certain two-line arrays. This notation 
is  fundamentally important in a variety of contexts and provides simple conditions for deciding when two characters in $\Irr(W)$  belong to the same family in $\Uch(W)$ or when a character a special.   
\end{remark}

The following proposition describes the special characters of $\Irr(W)$ in terms of the indexing sets just introduced. 
This description is similar to Spaltenstein's characterizations of the special representations of the classical Weyl groups in \cite[Section 4]{Spalt}.
(For the definition of the families in $\Irr(W)$, see \cite[\S13.2]{C}.)

\begin{proposition} \label{special-prop} The special irreducible characters of the finite Coxeter systems $(W,S)$ of classical type are described as follows:
\begin{enumerate}
\item[$\bullet$]  \textbf{Type $A_n$.}  $\chi^\alpha $ is special for all partitions $\alpha$ of $n+1$.

\item[$\bullet$]  \textbf{Type $BC_n$.}  $\chi^{(\alpha,\beta)} $ is special if and only if $\beta_i \leq \alpha_i + 1 $ and $\alpha_i' \leq \beta'_i + 1$ for all $i \in \PP$.

\item[$\bullet$]  \textbf{Type $D_n$.} $\chi^{\{\alpha,\beta\}}$ is special if and only if  one of the partitions $\alpha$, $\beta$ properly contains the other and the corresponding skew diagram contains no $2\times 2$ squares. Both  $\chi^{\{\alpha\},1}$ and $\chi^{\{\alpha\},2}$ are special for all partitions $\alpha$ of $n/2$ when $n$ is even.
\end{enumerate}
\end{proposition}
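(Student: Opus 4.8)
The plan is to reduce the statement to Lusztig's parametrization of $\Irr(W)$ for classical $(W,S)$ by \emph{symbols}, i.e.\ the two-line arrays of \cite[Chapter~4]{L}, under which an irreducible character is special precisely when its symbol is a \emph{special symbol} --- one whose two rows, merged and sorted, interleave (see \cite[\S13.2]{C}). Since this interleaving property is invariant under the shift equivalence on symbols, it suffices to compute with one convenient representative; the whole content of the proposition is then to translate ``special symbol'' into the asserted partition inequalities. Type $A_n$ needs nothing more than the standard fact that every family of $\Irr(S_{n+1})$ is a singleton --- equivalently, the generic degree and the fake degree of $\chi^\alpha$ both have lowest term a nonzero scalar times $x^{n(\alpha)}$ with $n(\alpha)=\sum_i (i-1)\alpha_i$ --- so every $\chi^\alpha$ is special.

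For type $BC_n$, fix $m$, write $\alpha=(\alpha_1\ge\cdots\ge\alpha_{m+1}\ge 0)$ and $\beta=(\beta_1\ge\cdots\ge\beta_m\ge 0)$, and represent $\chi^{(\alpha,\beta)}$ by the symbol with top row $\lambda_i=\alpha_i+(m+1-i)$ and bottom row $\mu_j=\beta_j+(m-j)$. A one-line substitution shows that the interleaving inequalities $\lambda_i\ge\mu_i$ and $\mu_i\ge\lambda_{i+1}$ become exactly $\beta_i\le\alpha_i+1$ and $\alpha_{i+1}\le\beta_i$. The first is already the desired condition, so it remains to prove the elementary transpose lemma that $\alpha_{i+1}\le\beta_i$ for all $i$ is equivalent to $\alpha'_j\le\beta'_j+1$ for all $j$; I would argue by counting, noting that $\alpha'_j=k\ge 2$ forces $\alpha_k\ge j$, hence $\beta_{k-1}\ge j$, hence $\beta'_j\ge k-1$, and conversely $\alpha_{i+1}=j\ge 1$ forces $\alpha'_j\ge i+1$, hence $\beta'_j\ge i$, hence $\beta_i\ge j$. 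This short lemma is the one non-formal ingredient for $BC_n$.

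For type $D_n$, symbols have two equal-length rows taken up to interchange, with $\lambda_i=\alpha_i+(m-i)$ and $\mu_i=\beta_i+(m-i)$; a \emph{degenerate} symbol ($\alpha=\beta$) interleaves automatically and corresponds to the split pair $\chi^{\{\alpha\},1},\chi^{\{\alpha\},2}$ for $\alpha\vdash n/2$, both of which are therefore special. For $\{\alpha,\beta\}$ with $\alpha\ne\beta$, specialness means that \emph{some} labelling of the two rows interleaves; if neither of $\alpha,\beta$ contains the other, then $\lambda_i\ge\mu_i$ (all $i$) fails for one labelling and fails for the swapped one too, so $\chi^{\{\alpha,\beta\}}$ is not special. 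Otherwise, labelling so that $\beta\subsetneq\alpha$, the substitution turns $\lambda_i\ge\mu_i$ into $\beta\subseteq\alpha$ and $\mu_i\ge\lambda_{i+1}$ into $\alpha_{i+1}\le\beta_i+1$ for all $i$. Finally I would note that $\alpha_{i+1}\le\beta_i+1$ for all $i$ is precisely the statement that the skew shape $\alpha/\beta$ contains no $2\times 2$ square: a square with upper-left cell $(i,j)$ lies in $\alpha/\beta$ iff $\alpha_{i+1}\ge j+1$ and $\beta_i\le j-1$, and such a $j$ exists iff $\alpha_{i+1}\ge\beta_i+2$. Together with the degenerate case this yields the stated description.

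The only real obstacle is bookkeeping with the symbol conventions --- row lengths $m+1$ versus $m$ in $BC_n$, equal lengths modulo the interchange equivalence in $D_n$, the shifts $m+1-i$ versus $m-i$, and the identification of degenerate symbols with the split characters $\chi^{\{\alpha\},i}$ --- so that the substitutions land exactly on the inequalities in the statement. The genuinely mathematical steps, namely the transpose lemma in type $BC_n$ and the no-$2\times 2$-square reformulation in type $D_n$, are each a one-paragraph counting argument.
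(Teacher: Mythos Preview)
Your proposal is correct and follows essentially the same route as the paper's proof: both start from the known interleaving (symbol) characterization of special characters in Carter/Lusztig, substitute to obtain $\alpha_{i+1}\le\beta_i\le\alpha_i+1$ in type $BC_n$ and $\alpha_i\le\beta_i$, $\beta_{i+1}\le\alpha_i+1$ in type $D_n$, and then translate these into the transpose inequality and the no-$2\times 2$-square condition respectively. The only differences are cosmetic---you index the symbol entries in decreasing order while the paper uses the reversed (increasing) convention of \eqref{b-special} and \eqref{d-special}---and you spell out the transpose lemma and the $2\times 2$-square equivalence where the paper simply asserts them as ``straightforward to check.''
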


\begin{remarks} Before giving its proof, let us explain the proposition pictorially.\begin{enumerate}
\item[(1)] The condition for $\chi^{(\alpha,\beta)}$ to be special in type $BC_n$ corresponds to the following picture: 
\[
\ytableausetup 
{boxsize=0.8em} 
\begin{ytableau}
*(gray) & *(gray) & *(gray) &*(gray) &*(gray) &*(gray) &*(gray) & *(gray) &  *(gray) &*(gray) \bullet&*(gray)  \bullet &  *(gray)  \bullet &*(gray)  \bullet &  \\
*(gray) &*(gray) &*(gray) &*(gray) &*(gray) &*(gray) &*(gray) &*(gray) &*(gray) &   \\
*(gray) &*(gray) &*(gray) &*(gray) &*(gray) &*(gray) &*(gray) &*(gray) &*(gray) &   \\
*(gray) &*(gray) &*(gray) &*(gray) &*(gray) &*(gray) &*(gray)\bullet &*(gray)\bullet &*(gray) \bullet&   \\
*(gray) &*(gray) &*(gray) &*(gray) &*(gray) &*(gray) &   \\
*(gray) &*(gray) &*(gray) &*(gray) &*(gray) &*(gray)\bullet &   \\
*(gray) &*(gray) &*(gray) &*(gray)\bullet &*(gray) \bullet&   \\
*(gray) &*(gray) &*(gray) &    \\
*(gray)\bullet &*(gray)\bullet &*(gray) \bullet&    \\
  \\
  \\  
  \\ \\
   \end{ytableau}
\] If the Young diagram of $\alpha$ is the set of gray cells, then $\chi^{(\alpha,\beta)}$ is special if and only if $\beta$ is formed from $\alpha$ by adding (a subset of) white cells and/or removing (a subset of) gray cells marked by  $\bullet$'s.

\item[(2)]  The condition for $\chi^{\{\alpha,\beta\}}$ to be special in type $D_n$ may be alternately stated  as the requirement that $\alpha\subsetneq\beta$ and $\beta_{i+1} \leq \alpha_i+1$ for all $i \in \PP$ (or that the same condition hold with $\alpha$ and $\beta$ reversed).
The relevant picture here is the following:
\[
\ytableausetup 
{boxsize=0.8em} 
\begin{ytableau}
*(gray) & *(gray) & *(gray) &*(gray) &*(gray) &*(gray) &*(gray) & *(gray) &  *(gray) &*(gray) &*(gray)   &  *(gray)   & & & &    \\
*(gray) &*(gray) &*(gray) &*(gray) &*(gray) &*(gray) &*(gray) &*(gray) &*(gray) & & & &   \\
*(gray) &*(gray) &*(gray) &*(gray) &*(gray) &*(gray) &*(gray) &*(gray) &*(gray) &   \\
*(gray) &*(gray) &*(gray) &*(gray) &*(gray) &*(gray) &  &  & &   \\
*(gray) &*(gray) &*(gray) &*(gray) &*(gray) &*(gray) &   \\
*(gray) &*(gray) &*(gray) &*(gray) &*(gray) &*(gray) &   \\
*(gray) &*(gray) &*(gray) & & & &   \\
*(gray) &*(gray) &*(gray) &    \\
*(gray)  & &&    \\
 & \\
  \\  
  \\ \\
   \end{ytableau}
\] If the Young diagram of $\alpha$ is the set of gray cells and $\alpha\subset\beta$, then $\chi^{\{\alpha,\beta\}}$ is special if and only if $\beta$ is formed by adding (a nonempty subset of) white cells to $\alpha$.  Note that our diagram truncates the set of  white cells shown in the first row and first column.
\end{enumerate}
\end{remarks}

\begin{proof}[Proof of Proposition \ref{special-prop}]
That every irreducible character in type $A_n$ is special is precisely \cite[Proposition 11.4.1]{C}.
The  condition in type $BC_n$ for a character $\chi^{(\alpha,\beta)}$ to be special \cite[Propositions 11.4.2 and 11.4.3]{C} is usually stated  as the requirement that 
$\lambda_1 \leq \mu_1 \leq \lambda_2 \leq \mu_2 \leq \dots \leq \mu_m \leq \lambda_{m+1}$ where 
\be\label{b-special}\lambda_i = \alpha_{m+1-(i-1)} + (i-1) \qquad\text{and}\qquad \mu_i = \beta_{m-(i-1)} + (i-1),\ee 
and where $m$ is any  integer large enough so that $m+1$ and $m$ are at least the number of parts of $\alpha$ and $\beta$, respectively.  In particular, one can  take $m=n$ since $|\alpha| + |\beta| = n$.  It is simply a matter of rewriting this statement to deduce that $\chi^{(\alpha,\beta)}$ is special if and only if $ \alpha_{i+1} \leq \beta_i \leq \alpha_{i}+1$ for all $ i \in \PP$. In turn, it is straightforward to check that $\alpha_{i+1} \leq \beta_i$ for all $i \in \PP$ if and only if $\alpha_i' \leq \beta_i'+1$ for all $ i \in \PP$.

Similarly, in type $D_n$ the character $\chi^{\{\alpha,\beta\}}$ is special if and only if 
$ \lambda_1 \leq \mu_1  \leq \dots \leq \lambda_m \leq  \mu_m
$ or $
 \mu_1 \leq \lambda_1 \leq \dots \leq \mu_m \leq  \lambda_m$
 where \be\label{d-special}\lambda_i = \alpha_{m-(i-1)}+(i-1) \qquad\text{and}\qquad \mu_i = \beta_{m-(i-1)} + (i-1)\ee and $m$ is any sufficiently large integer \cite[Proposition 11.4.4]{C}.  This is equivalent to the requirement that $\alpha_i \leq \beta_i$ and $\beta_{i+1} \leq \alpha_i+1$ for all $i \in \PP$ (or the same condition with $\alpha$ and $\beta$ reversed), and it is straightforward to check that this holds if and only if $\alpha\subsetneq \beta$ and $\beta\setminus \alpha$ contains no $2\times 2$ squares.  Finally, Carter \cite[Proposition 11.4.4]{C} notes that $\chi^{\{\alpha\},1}$ and $\chi^{\{\alpha\},2}$ are special in type $D_{2n}$ for all partitions $\alpha$ of $n$.
\end{proof}

The involutions of $W$  are  the permutations whose cycles all have length one or two in type $A_n$, and   the  matrices which are symmetric in type $BC_n$ or $D_n$. 
For each nonnegative integer $m$, let $ \sigma_m$ denote the permutation $(1,m+1)(2,m+2)\cdots(m,2m)$ or (when appropriate) the  corresponding $2m\times 2m$ permutation matrix.  The permutations $ \sigma_m$ for $0 \leq m \leq \lfloor \frac{n+1}{2} \rfloor$ represent the distinct conjugacy classes of involutions in the Coxeter group of type $A_n$, while in type $BC_n$,  the matrices 
\[  \sigma_{k,\ell,m} = \(\barr{ccc}  \sigma_m & 0 & 0\\ 0 & I_k & 0 \\ 0 & 0 & -I_\ell \earr\),\quad\text{for integers $k,\ell,m\geq 0$ with $2m+k+\ell = n$}\]
provide the desired set of representative involutions.   In the type $D_n$, the matrices $ \sigma_{k,\ell,m}$ with 
$2m+k+\ell = n$ and $\ell$ even
each belong to a distinct conjugacy class of involutions.  If $n$ is odd then
any involution is conjugate to one of these elements; if $n$ is even, then there is one additional conjugacy class of involutions represented by  the product of $ \sigma_{0,0,m/2}$ and the diagonal matrix $\diag(1,\dots,1,-1,-1)$, an element we denote $\sigma'_{0,0,m/2}$.

As the main result of this section we now describe how the characters $\chi_{W,\sigma}$  for   involutions $\sigma \in W$ decompose when $(W,S)$ is of classical type.  Kottwitz  derives equivalent forms of  these decompositions in his paper \cite{Kottwitz}, and our primary contribution is to  collect his findings in one concise statement.

\begin{theorem}[See Kottwitz \cite{Kottwitz}]
\label{previous}
For a finite Coxeter system $(W,S)$ of classical type, the characters $\chi_{W,\sigma}$ 
for  involutions $\sigma \in W$ 
decompose as follows:
\begin{enumerate}
\item[$\bullet$] \textbf{Type $A_n$.} Let $m$ be a nonnegative integer with $2m\leq n+1$.  Then
\begin{enumerate}
\item[] $\ds\chi_{W, \sigma_m} = \sum_\alpha \chi^\alpha,$  
\end{enumerate}
where  the sum is over all partitions $\alpha$ of $n+1$ with exactly $n+1-2m$ odd columns.

\item[$\bullet$]  \textbf{Type $BC_n$.} Let $k,\ell,m\geq 0$ be  integers with $2m+k+\ell = n$.  Then 
\begin{enumerate}
\item[] $\ds \chi_{W, \sigma_{k,\ell,m}} = \sum_{(\alpha,\beta)} \binom{d(\alpha,\beta)}{|\alpha\cap \beta| - m} \chi^{(\alpha,\beta)},\quad$
\end{enumerate}
where  the sum is over  bipartitions $(\alpha,\beta)$ with $|\alpha| = k+m$ and $|\beta| = \ell+m$ such that  $\beta_i \leq \alpha_i+1$ and $\alpha_i' \leq \beta_i' + 1$ for all $i \in \PP$, and where
\begin{enumerate}
\item[] 
$d(\alpha,\beta)$ is the number of pairs $(i,j) \in \PP\times \PP$ for which $i=\alpha'_j$ and $j=\beta_i$.
\end{enumerate}

\item[$\bullet$] \textbf{Type $D_n$.} Let $k,\ell,m \geq 0$ be integers with $2m+k+\ell =n$ and $\ell$ even and $k+\ell\neq 0$. Then
\begin{enumerate}
\item[] $\ds \chi_{W, \sigma_{k,\ell,m}} = \sum_{(\alpha,\beta)} \binom{e(\alpha,\beta)}{f(\alpha,\beta)-\ell} \chi^{\{\alpha,\beta\}},
\quad$
\end{enumerate}
 where the  sum is over all  bipartitions $(\alpha,\beta)$ with $|\alpha| = m$ and $|\beta| = k+\ell+m$ such that $\alpha\subset \beta$ and $\beta\setminus \alpha$ contains no $2\times 2$ squares, and where
 \begin{enumerate}
\item[] $e(\alpha,\beta)$ is the number of connected components of the skew diagram $\beta\setminus\alpha$,
\item[] $f(\alpha,\beta)$ is the number of positive integers $i$ with  $\alpha_i \neq \beta_i$.
\end{enumerate}

In addition, if $n$ is even then one can label the  characters $\chi^{\{\alpha\},1},\chi^{\{\alpha\},2} \in \Irr(W)$ such that 
\begin{enumerate}
\item[] $\ds\chi_{W, \sigma_{0,0,n/2}} = \sum_{\alpha} \chi^{\{\alpha\},1}$\ \ and \ \
$\ds\chi_{W, \sigma_{0,0,n/2}'} = \sum_{\alpha} \chi^{\{\alpha\},2},$
\end{enumerate}
where both sums are over all partitions $\alpha$ of $n/2$.
\end{enumerate}

\end{theorem}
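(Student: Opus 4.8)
The plan rests on the fact that each $\chi_{W,\sigma}$ is an induced character. Since $\varrho_W$ is a genuine representation (Proposition~\ref{prelim-prop}) in which $\varrho_W(s)$ sends every basis vector $a_w$ either to $\pm a_w$ or to $a_{sws}$, for each $x\in W$ the operator $\varrho_W(x)$ sends $a_\sigma$ to $\mathrm{sign}(x,\sigma)\,a_{x\sigma x^{-1}}$ for a well-defined sign, and $x\mapsto\mathrm{sign}(x,\sigma)$ is then a linear character $\zeta_\sigma$ of the centralizer $C_W(\sigma)$; a short calculation with the induced-character formula identifies $\chi_{W,\sigma}$ with $\Ind_{C_W(\sigma)}^{W}(\zeta_\sigma)$. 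Theorem~\ref{previous} thus reduces, in each classical type, to (a) describing $C_W(\sigma)$ and $\zeta_\sigma$ explicitly, (b) decomposing $\Ind_{C_W(\sigma)}^{W}(\zeta_\sigma)$ using the standard branching rules for the classical Weyl groups, and (c) recognizing the multiplicities so obtained as the binomial coefficients in the statement. Alternatively, one may skip (a)--(b) and simply invoke Kottwitz's decompositions from \cite{Kottwitz}; since those are phrased in Lusztig's two-line-array parametrization of $\Irr(W)$, the work is then the dictionary between two-line arrays (as in \eqref{b-special} and \eqref{d-special}) and bipartitions. Either way the combinatorial core is the same, and I sketch it below.

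In type $A_n$, $C_{S_{n+1}}(\sigma_m)\cong(S_2\wr S_m)\times S_{n+1-2m}$, and computing $\mathrm{sign}(x,\sigma_m)$ for $x$ a transposition inside the centralizer shows that $\zeta_{\sigma_m}$ is trivial on $S_{n+1-2m}$ and on the top group $S_m$, and is the sign character on each of the $m$ base copies of $S_2$. The decomposition of $\Ind(\zeta_{\sigma_m})$ into the characters $\chi^\alpha$ with $\alpha$ ranging over partitions of $n+1$ having exactly $n+1-2m$ odd columns is the classical involution-model calculation; see \cite{APR,IRS}. Equivalently, this follows from the Robinson--Schensted correspondence, under which an involution with exactly $n+1-2m$ fixed points has insertion tableau whose shape has exactly $n+1-2m$ odd columns. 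Summing over $m$ recovers $\chi_W=\sum_\alpha\chi^\alpha$, i.e.\ the fact that $\varrho_W$ is the Gelfand model of $S_{n+1}$.

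In types $BC_n$ and $D_n$ the centralizer of $\sigma_{k,\ell,m}$ is $\big((\ZZ/2\ZZ)^2\wr S_m\big)\times W_k\times W_\ell$, intersected with the index-two subgroup in the type-$D$ case, and $\zeta_{\sigma_{k,\ell,m}}$ is once more an explicit sign character. Running the branching rules for $W_n=(\ZZ/2\ZZ)\wr S_n$ (Mackey's theorem together with the Littlewood--Richardson rule) expresses $\langle\chi_{W,\sigma},\chi^{(\alpha,\beta)}\rangle$ as a sum of products of small Littlewood--Richardson coefficients, which one evaluates in closed form. Three claims then remain: that the multiplicity is nonzero only when $\alpha_{i+1}\le\beta_i\le\alpha_i+1$ for all $i$, equivalently exactly when $\chi^{(\alpha,\beta)}$ is special in the sense of Proposition~\ref{special-prop}; that in that case the number of independent binary choices in the sum is $d(\alpha,\beta)$, the number of cells of $\alpha\cap\beta$ terminating a column of $\alpha$ and a row of $\beta$; and that the conjugacy-class parameter $m$ selects exactly $\binom{d(\alpha,\beta)}{|\alpha\cap\beta|-m}$ of them, so that summing over $m$ returns the coefficient $2^{d(\alpha,\beta)}$ of Theorem~\ref{thm2}. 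For $D_n$ one runs this intrinsically inside $W$, recalling that the type-$D$ irreducibles arise by restriction from $W_n$, namely $\chi^{\{\alpha,\beta\}}=\chi^{(\alpha,\beta)}|_W$ for $\alpha\ne\beta$ and $\chi^{(\alpha,\alpha)}|_W=\chi^{\{\alpha\},1}+\chi^{\{\alpha\},2}$, so that the relevant branching rules for $W$ follow by Clifford theory from those for $W_n$. The two conjugacy classes $\sigma_{0,0,n/2}$ and $\sigma_{0,0,n/2}'$ arise because the single $W_n$-class of $\sigma_{0,0,n/2}$ splits in $W$ (its $W_n$-centralizer lies in $W$), and conjugation by an element of $W_n\setminus W$ exchanges these two classes while exchanging $\chi^{\{\alpha\},1}$ and $\chi^{\{\alpha\},2}$; this both yields the degenerate part of the type-$D$ statement and fixes the labeling of $\chi^{\{\alpha\},1},\chi^{\{\alpha\},2}$ demanded there.

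The main obstacle is step (c) in type $BC_n$: passing from the branching-rule sum to the single binomial coefficient $\binom{d(\alpha,\beta)}{|\alpha\cap\beta|-m}$, and in particular proving that the support condition for nonzero multiplicity is the staircase inequality $\alpha_{i+1}\le\beta_i\le\alpha_i+1$ (equivalently, specialness). I would argue by induction on the number of distinct column lengths of $\alpha\cap\beta$, removing one ``mixed corner'' at a time and checking that $d(\alpha,\beta)$ decreases by one and the binomial coefficient transforms correctly. A secondary and more localized difficulty is the type-$D$ bookkeeping at the degenerate bipartitions $(\alpha,\alpha)$: one must choose orientations consistently so that the two split characters are matched to the two split conjugacy classes in a compatible way, and verify that the sign character $\zeta_\sigma$ relevant to the type-$D$ group is handled correctly even though $\varrho_W$ in type $D_n$ is not the restriction of $\varrho_{W_n}$ in type $BC_n$.
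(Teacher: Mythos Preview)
Your proposal is correct, and the ``alternative'' route you describe---invoke Kottwitz's decompositions from \cite{Kottwitz} and translate from Lusztig's two-line-array parameters to bipartitions---is exactly what the paper does. The paper's proof consists entirely of this dictionary: in type $BC_n$ it checks that Kottwitz's count $d$ (the number of $\mu_i$ not appearing among the $\lambda_j$) equals the number of $i\in\PP$ with $\alpha_{i+1}<\beta_i\le\alpha_i$, which is your $d(\alpha,\beta)$; in type $D_n$ it identifies Kottwitz's $e$ with the number of connected components of $\beta\setminus\alpha$ and verifies $\binom{e}{\ell+m-f_0}=\binom{e(\alpha,\beta)}{f(\alpha,\beta)-\ell}$ via the binomial symmetry $\binom{e}{k}=\binom{e}{e-k}$.

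Your primary route (a)--(c), computing centralizers and running branching rules directly, is essentially a reproduction of Kottwitz's own argument in \cite{Kottwitz}, which the paper deliberately cites rather than redoes. That approach is sound and more self-contained, but the ``main obstacle'' you flag---the inductive reduction of the Littlewood--Richardson sum to a single binomial coefficient---is real work, and the paper simply outsources it. So your proposal subsumes the paper's proof; you would lose nothing by taking the shortcut you already identified.
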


\begin{remark}
Theorem \ref{thm2} in the introduction follows by summing each of these decompositions over the corresponding  representative set of involutions, noting that we evaluate the binomial coefficient $\binom{n}{k}$ to be zero if $k>n$ or $k<0$. 
%
%
%

Concerning the decomposition in type $D_n$, we recall that the connected components of a skew diagram are the equivalences classes of cells under the relation of (left-right or up-down) adjacency. For example, if $\beta = (7,5,2,2)$ and $\alpha = (5,4,1)$ then the skew diagram $\beta\setminus \alpha$, shown as the white cells in the picture 
 \[
\ytableausetup{boxsize=1em} 
\begin{ytableau} 
*(gray) &*(gray) &*(gray) &*(gray) &*(gray) &  &  \\ 
*(gray) &*(gray) &*(gray) &*(gray) & \\ 
*(gray) & \\ 
 & 
\end{ytableau}
 \]
has three connected components.
\end{remark}

\begin{proof}
The decomposition of $\chi_{W,\sigma}$ in type $A_n$ appears as \cite[Corollary A.1]{APR} or  \cite[Lemma 2]{IRS} or \cite[\S3.1]{Kottwitz}.

In type $BC_n$, given a bipartition $(\alpha,\beta)$ of $n$, Kottwitz \cite[\S3.2]{Kottwitz} proves that the multiplicity of $\chi^{(\alpha,\beta)}$ in $\chi_{W,\omega_{k,\ell,m}}$ 
is zero unless 
$\chi^{(\alpha,\beta)}$  
is special and $|\alpha| =k+m$ and $|\beta| = \ell+m$, in which case the  multiplicity  
is 
$\binom{d}{|\alpha\cap\beta|-m}$, where 
\begin{enumerate}
\item[$\bullet$] $d$ is the number of integers in the sequence 
$\mu_1,\mu_2,\dots,\mu_n$ which do not occur in the sequence $\lambda_1,\lambda_2,\dots,\lambda_{n+1}$, where  $\lambda_i = \alpha_{n+1-(i-1)} + (i-1)$ and $\mu_i = \beta_{n-(i-1)} + (i-1)$ as in (\ref{b-special}).
\end{enumerate}
Given Proposition \ref{special-prop}, we 
have only to show that when $\chi^{(\alpha,\beta)}$ is special, $d$ is equal to the number $d(\alpha,\beta)$ defined in the theorem. 
As noted in the proof of Proposition \ref{special-prop}, if $\chi^{(\alpha,\beta)}$ is special then $\lambda_1\leq \mu_1 \leq \dots \leq \lambda_n \leq  \mu_n \leq \lambda_{n+1}$, and so $d$ is equal to the number of $i \in [n]$ with $\lambda_i < \mu_i <\lambda_{i+1}$, which by definition is the number of $i \in \PP$ with $\alpha_{i+1} < \beta_i \leq \alpha_i$. This last formulation is precisely the number of pairs $(i,j) \in \PP\times \PP$ for which $i=\alpha_j'$ and $j=\beta_i$, as desired.

Now assume $(W,S)$ is of type $D_n$.
Kottwitz notes the given decompositions of $\chi_{W, \sigma_{0,0,n/2}}$ and $\chi_{W, \sigma_{0,0,n/2}'}$ when $n$ is even in \cite[\S3.3]{Kottwitz}.  
On the other hand,
for an unordered bipartition $\{\alpha,\beta\}$ of $n$, Kottwitz \cite[\S3.3]{Kottwitz}  proves that the multiplicity of $\chi^{\{\alpha,\beta\}}$ in $\chi_{W,\omega_{k,\ell,m}}$ (assuming $\ell$ is even and $k+\ell \neq 0$)
is zero unless 
$\chi^{\{\alpha,\beta\}}$  
is special and  $\{ |\alpha|, |\beta|\} =\{ k+\ell+m,m\}$, in which case the  multiplicity  
is 
$\binom{e}{\ell+m-f_0}$, where 
\begin{enumerate}
\item[$\bullet$] $e$ is the number of integers in the sequence $\mu_1,\mu_2,\dots,\mu_n$ which do not occur in the sequence $\lambda_1,\lambda_2,\dots,\lambda_{n}$, where we define $\lambda_i = \alpha_{n-(i-1)} + (i-1)$ and $\mu_i = \beta_{n-(i-1)} + (i-1)$ as in (\ref{d-special}).

\item[$\bullet$] $f_0$ is $\sum_{i \in \PP} \max\{\beta_{i+1},\alpha_i\}$ if $\alpha \subset \beta$ or $ \sum_{i \in \PP} \max\{\alpha_{i+1},\beta_i\}$ if $\beta \subset \alpha$.
\end{enumerate}
In light of Proposition \ref{special-prop}, it suffices to show that if $\chi^{\{\alpha,\beta\}}$ is special then $\binom{e}{\ell+m-f_0}$ is equal to the coefficient $\binom{e(\alpha,\beta)}{f(\alpha,\beta)-\ell}$ defined in the theorem. 

We may assume  $\alpha\subsetneq \beta$ and $\beta\setminus \alpha$ contains no $2\times 2$ squares.
Then $\lambda_1 \leq \mu_1 \leq \lambda_2 \leq \mu_2 \leq \dots \leq \lambda_n \leq \mu_n$, and $e$ counts the number of $i \in [n]$ with $\lambda_{i} < \mu_i < \lambda_{i+1}$, where by convention $\lambda_{n+1} = \infty$. Equivalently, $e$ is  equal to the number of $i \in \PP$ with $\alpha_i < \beta_i \leq \alpha_{i-1}$, where we define $\alpha_0 = \infty$.
Consulting the picture in the remarks following Proposition \ref{special-prop}, one finds that the upper right most cells of the connected components of $\beta\setminus \alpha$
are precisely those  of the form $(i,\beta_i)$ for  $i \in \PP$ with $\alpha_i <\beta_i \leq \alpha_{i-1}$. Hence $e = e(\alpha,\beta)$.

Continuing, it follows from our assumptions that $f_0= |\alpha|  + g_0= m+g_0$, where $g_0$ is the number of $ i \in \PP$ for which $\alpha_i +1 = \beta_{i+1}$.  The number of connected components of $\beta\setminus \alpha$ is equal to $f(\alpha,\beta)$ (the number of $i\in \PP$ with $\alpha_i < \beta_i$) minus $g_0$, since $g_0$ counts the nonempty rows in $\beta\setminus \alpha$ which are connected to the rows above.  Thus $\binom{e}{\ell+m-f_0} = \binom{e}{\ell - g_0} = \binom{e}{e+g_0 - \ell} = \binom{e(\alpha,\beta)}{f(\alpha,\beta) - \ell}$, as desired.
\end{proof}

\subsection{The decomposition of $\chi_W$ in type $H_3$}\label{h3-sect}

Let $(W,S)$ be the Coxeter system of type $H_3$, so that $W$ is isomorphic to the direct product of $S_2$ and the alternating subgroup of $S_5$, and  $S = \{a,b,c\}$ consists of three elements, which we label according to the Dynkin diagram \[
\xy\xymatrix{
   a  \ar @{-}^{5} [r] & b \ar @{-}^{} [r] & c   
}\endxy
\] (so that $a,b,c$ satisfy the relations $(ab)^5 =  (ac)^2 = (bc)^3 = a^2=b^2=c^2 = 1$).
  There are then four conjugacy classes of involutions in $W$, represented by the elements $1$, $a$, $ac$, and $(abc)^5$, and 
$W$ has 10 irreducible characters. We refer to these, mirroring  Carter's notation in \cite{C}, by writing $\phi_{d,e}$ for the irreducible character of $W$ satisfying
\[ \phi_{d,e}(1) =d \qquad\text{and}\qquad \FakeDeg(\phi_{d,e}) = (\text{nonzero constant})\cdot x^e + \text{higher order terms}.
\] This convention manages to identify each element of $\Irr(W)$ uniquely.

Decomposing $\varrho_W$ and its subrepresentations  is a routine calculation which we have carried out in the computer algebra system {\sc{Magma}} \cite{magma}.  From this computation, we obtain the following:

\begin{proposition}\label{h3-tbl}
If $(W,S)$ is of type $H_3$ then the characters $\chi_{W,\sigma}$ decompose as follows:
\begin{enumerate}
\item[(1)] $\chi_{W,1} = \phi_{1,0}$.

\item[(2)] $\chi_{W,(abc)^5} = \phi_{1,15}$.

\item[(3)] $\chi_{W,a} =  \phi_{3,1} + \phi_{3,3}  +\phi_{4,3} + \phi_{5,5}$.

\item[(4)] $\chi_{W,ac} = \phi_{3,6} + \phi_{3,8} + \phi_{4,4} + \phi_{5,2}$.

\end{enumerate}
Consequently, $\chi_W = \sum_{\psi \in \Irr(W)} \psi$ and $\varrho_W$ is a Gelfand model.
\end{proposition}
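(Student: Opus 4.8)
The plan is a direct finite computation; almost everything is forced by Definition~\ref{intro-def} (together with Proposition~\ref{prelim-prop}, which guarantees that $\varrho_W$ is a genuine representation and that each of the four conjugacy classes of involutions spans an invariant subspace), and only parts (3) and (4) require real work.

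First I would dispose of the two singleton classes. For $\sigma=1$: since $s\cdot 1=1\cdot s$ but $\ell(s)>\ell(1)$, the formula in Definition~\ref{intro-def} never takes its first branch, so $\varrho_W(s)a_1=a_{s\cdot 1\cdot s}=a_1$ for every $s\in S$, and $\varrho_{W,1}$ is the trivial character $\phi_{1,0}$. For $\sigma=(abc)^5$: this is the longest element $w_0$ of $W$, which in type $H_3$ acts as $-1$ on the reflection representation and hence is central; thus $sw_0=w_0s$ and $\ell(w_0s)<\ell(w_0)$ for all $s\in S$, so $\varrho_W(s)a_{w_0}=-a_{w_0}$ and $\varrho_{W,w_0}$ is the sign character $\phi_{1,15}$.

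Next come the two remaining classes: the $15$ reflections (class of $a$) and the $15$ products of two commuting reflections (class of $ac$), each spanning a $15$-dimensional invariant subspace of $\Invol{W}$. Here I would fix an explicit model of $W$ --- the real reflection representation of the $H_3$ Coxeter matrix, or the isomorphism $W\cong\ZZ/2\ZZ\times A_5$ --- list the involutions in each class, write down the three $15\times 15$ matrices $\varrho_W(a),\varrho_W(b),\varrho_W(c)$ using the case analysis of Definition~\ref{intro-def} (recalling that $\Des(w)$ is both the left and the right descent set when $w^2=1$), compute the traces of $\varrho_{W,a}$ and $\varrho_{W,ac}$ on a set of conjugacy-class representatives of $W$, and match the resulting class functions against the character table of $W$ (from \cite{C} or CHEVIE \cite{CHEVIE}) via orthogonality. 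This is exactly the Magma \cite{magma} calculation referred to in the statement; it can be carried out by hand with some bookkeeping, and one notices after the fact that the two decompositions are exchanged by tensoring with $\sgn$, consistent with the class of $a$ being $w_0$ times the class of $ac$.

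Finally, summing the four cases gives
\[
\chi_W=\phi_{1,0}+\phi_{1,15}+\phi_{3,1}+\phi_{3,3}+\phi_{3,6}+\phi_{3,8}+\phi_{4,3}+\phi_{4,4}+\phi_{5,2}+\phi_{5,5},
\]
which is precisely $\sum_{\psi\in\Irr(W)}\psi$, since these are exactly the ten irreducible characters of $W$ and each occurs once; hence $\varrho_W$ is a Gelfand model. As a consistency check, $\sum_{\psi\in\Irr(W)}\psi(1)=2(1+3+4+5)=32$ equals the number of involutions in $W$, which is $\dim\Invol{W}$. The main obstacle is the verification of parts (3) and (4): there is no conceptual content, only the explicit decomposition of two $15$-dimensional representations, which is most safely delegated to a computer.
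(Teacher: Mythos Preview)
Your proposal is correct and follows essentially the same approach as the paper, which simply reports that the decomposition is a routine calculation carried out in {\sc Magma}. In fact you give more detail than the paper does: you dispose of cases (1) and (2) by hand and correctly identify that the $15$-dimensional subrepresentations in (3) and (4) are the only nontrivial computation, to be verified by computer.
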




%

\subsection{The decomposition of $\chi_W$ in type $H_4$}
\label{h4-sect}

Let $(W,S)$ now be the Coxeter system of type $H_4$, so that $W$ is  a finite group of order 14400 generated by four elements $S = \{a,b,c,d\}$, which we label according to the Dynkin diagram
 \[
\xy\xymatrix{
   a  \ar @{-}^{5} [r] & b \ar @{-}^{} [r] & c   \ar @{-}^{} [r] & d   
}\endxy.
\] 
%
%
There are five conjugacy classes of involutions in $W$, represented by 
$1$, $a$, $ac$, $(abc)^5$, and $(abcd)^{15}$, and $\Irr(W)$ has 34 elements.
We refer to the irreducible characters of $W$ exactly as in type $H_3$, with one exception: in type $H_4$, the notation $\phi_{d,e}$ fails to distinguish the two
irreducible characters of $W$ of degree 30,  and we consequently denote these  characters by  $\phi_{30,10,12}$ and $\phi_{30,10,14}$: here, $\phi_{30,10,f}$ indicates  the irreducible character with 
\[ \phi_{30,10,f}(1) = 30\qquad\text{and}\qquad \FakeDeg(\phi_{30,10,f}) = x^{10} + x^f + \text{higher order terms}.\]
Again, decomposing $\chi_W$ and $\chi_{W,\sigma}$ is a routine  computation, which we have carried out in {\sc{Magma}}. 

\begin{proposition}\label{h4-tbl}
If $(W,S)$ is of type $H_4$ then the characters $\chi_{W,\sigma}$ decompose as follows:
\begin{enumerate}
\item[(1)] $\chi_{W,1} = \phi_{1,0}$.

\item[(2)] $\chi_{W,(abcd)^{15}} = \phi_{1,60}$.

\item[(3)] $\chi_{W,a} =  \phi_{4,1} + \phi_{4,7}  +\phi_{16,3} + \phi_{36,5}$.

\item[(4)] $\chi_{W,(abc)^5} = \phi_{4,31} + \phi_{4,37} + \phi_{16,21} + \phi_{36,15}$.

\item[(5)] $\ba \chi_{W,ac} =\ & \phi_{9,2} + \phi_{9,6} + \phi_{9,22} + \phi_{9,26} + \phi_{16,6} + \phi_{16,18} + \phi_{25,4} + \phi_{25,16} \\ & +  2\phi_{24,6}  +2 \phi_{24,12} +  2\phi_{18,10} +  2\phi_{30,10,12} +2 \phi_{30,10,14} +  2\phi_{40,8}.\ea$
\end{enumerate}
\end{proposition}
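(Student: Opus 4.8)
The final statement to prove is Proposition~\ref{h4-tbl}, which lists the explicit decompositions of the characters $\chi_{W,\sigma}$ for the five conjugacy classes of involutions in the Coxeter group $W$ of type $H_4$. The proof is a finite, explicit computation, so the plan is essentially to carry it out carefully in a computer algebra system and then report the output in the format the paper uses; this is exactly what the preceding subsection on $H_3$ does, and the two arguments are parallel.

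\medskip

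\textbf{Plan of proof.} First I would realize the Coxeter group $W$ of type $H_4$ concretely inside a computer algebra system (the paper uses \textsc{Magma}), using the presentation with generators $S = \{a,b,c,d\}$ and the braid/Coxeter relations encoded by the Dynkin diagram $a \overset{5}{-} b - c - d$. Next I would construct the vector space $\Invol{W}$ with basis indexed by the involutions of $W$ (there are finitely many, found by a brute-force scan over the $14400$ elements of $W$, or more efficiently by a conjugacy-class computation), and implement the operators $\varrho_W(s) = \varrho_{W,0}(s)$ for $s \in S$ by the explicit formula of Definition~\ref{intro-def}: $\varrho_W(s)a_w = -a_w$ if $sw=ws$ and $\ell(ws)<\ell(w)$, and $\varrho_W(s)a_w = a_{sws}$ otherwise. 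By Proposition~\ref{prelim-prop} this genuinely defines a $W$-representation, so one may then compute the character $\chi_W$ of $\varrho_W$ (trace of $\varrho_W(w)$ for $w$ in each conjugacy class of $W$), and likewise the character $\chi_{W,\sigma}$ of the invariant subrepresentation $\varrho_{W,\sigma}$ spanned by the conjugacy class of each of the five involution representatives $1$, $a$, $ac$, $(abc)^5$, $(abcd)^{15}$.

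\medskip

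Then I would compare each $\chi_{W,\sigma}$ against the character table of $W$ to extract the multiplicities $\langle \chi_{W,\sigma}, \psi \rangle$ for $\psi \in \Irr(W)$, which the system computes directly via the standard inner product on class functions. The only genuinely nontrivial bookkeeping step is to match the abstract irreducible characters produced by the software with Carter's labels $\phi_{d,e}$ used in the paper: for most irreducibles the pair $(d,e)$, where $d = \psi(1)$ and $x^e$ is the lowest term of $\FakeDeg(\psi)$, determines $\psi$ uniquely, and the fake degrees are computed from Proposition~\ref{fake-prop} using the character table and the degrees $d_1,\dots,d_4 = 2,12,20,30$ of $W(H_4)$. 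The one exception is the pair of degree-$30$ irreducibles, which share $(d,e)=(30,10)$; here one distinguishes $\phi_{30,10,12}$ from $\phi_{30,10,14}$ by the \emph{second} lowest exponent appearing in the fake degree, exactly as the paper stipulates. Having fixed this dictionary, one simply reads off the five decompositions and checks they agree with items (1)--(5) as stated (and, as a sanity check, that $\sum_\sigma \chi_{W,\sigma}$, summed over the five representatives, has total degree equal to the number of involutions in $W$, and that each listed multiplicity matches).

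\medskip

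\textbf{Main obstacle.} There is no conceptual difficulty: the proof is a verification. The practical bottleneck is the scale of the computation --- $W$ has $14400$ elements and $\Invol{W}$ has dimension equal to the number of involutions (on the order of several hundred), so the matrices $\varrho_W(s)$ are moderately large, and one must be careful that the length function $\ell$, the descent test $\ell(ws)<\ell(w)$, and the conjugation $w \mapsto sws$ are all implemented correctly, since a sign error in the ``$sw=ws$ and $\ell(ws)<\ell(w)$'' branch would silently corrupt the answer. The other place demanding care is the label-matching for the two degree-$30$ characters, and more generally confirming that the fake-degree data computed by the machine matches the published tables in \cite{C} so that Carter's notation is applied consistently. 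Once these are handled, the stated decompositions follow immediately from the output, completing the proof.
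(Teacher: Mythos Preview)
Your proposal is correct and matches the paper's own approach: the paper simply states that decomposing $\chi_W$ and $\chi_{W,\sigma}$ in type $H_4$ is a routine computation carried out in \textsc{Magma}, exactly as in the preceding $H_3$ case. Your description of the label-matching via fake degrees (including the special handling of the two degree-$30$ characters) spells out the bookkeeping the paper leaves implicit, but the method is the same.
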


\subsection{The decomposition of $\chi_W$ in type $I_2(m)$}\label{i2-sect}

Fix $m\geq 3$ and 
 let $(W,S)$ be the Coxeter system of type $I_2(m)$, so that
$W$ is  the dihedral group of order $2m$, generated by the two elements $S=\{r,s\}$ subject to the relations $r^2=s^2 =(rs)^m =1$.
The involutions in $W$ are the elements 1 and $(rs)^jr$ for $0\leq j \leq m-1$, along with $(rs)^{m/2}$ if $m$ is even.
 One checks that
\be\label{des-i2}
\Des(1) = \varnothing,
\qquad
\Des\((rs)^{m/2}\) = \{r,s\},
\qquad
\Des\((rs)^jr\) = \begin{cases} \{r\},&\text{if } 2j+1<m, \\
 \{r,s\},&\text{if }2j+1=m, \\
 \{s\},&\text{if }2j+1>m.\end{cases}
\ee
Write $w_0$ for the longest element of $W$, given by   $(rs)^{\frac{m-1}{2}}r$ if $m$ is odd or $(rs)^{m/2}$ if $m$ is even.
The group $W$  has either two or four conjugacy classes of involutions, represented by $1$ and $r$ if $m$ is odd and by $1$, $w_0$, $r$, and $s$ if $m$ is even.

The irreducible characters of $W$ are given as follows. There are two linear characters when $m$ is odd, given by $\phi_{1,0} = \One$ and $\phi_{1,m} = \sgn$, and four linear characters when $m$ is even, given by 
\[\phi_{1,0}=\One, \qquad \phi_{1,m} = \sgn, \qquad \phi'_{1,m/2}: r^js^k \mapsto (-1)^k,\qquad\text{and}\qquad \phi_{1,m/2}'' : r^js^k \mapsto (-1)^j.\]
There are in addition $\lfloor\frac{m-1}{2}\rfloor$ distinct irreducible characters of degree two given by the  functions 
\[ \barr{rlcl} \phi_{2,k} : & (rs)^jr &\mapsto& 0, \\ &(rs)^j &\mapsto& 2\cos(2\pi jk/m),
\earr
\qquad\text{for integers $k$ with } 0<k<\frac{m}{2}.
\]
These constructions exhaust all elements of $\Irr(W)$. 
We have labeled the characters of $W$ following our convention in type $H_3$ and $H_4$: the first index of $\phi_{d,e}$ indicates the character's degree while the second index is the largest power of $x$ dividing the character's fake degree. 
In the notation of Section \ref{i2-unipotent}, we have 
\[ \phi_{1,0} =\One,\qquad \phi_{1,m} =\sgn,
 \qquad
\phi_{1,m/2}' = \Phi_{(0,\frac{m}{2})}',
\qquad
\phi_{1,m/2}'' = \Phi_{(0,\frac{m}{2})}'',
\qquad
\phi_{2,k} = \Phi_{(0,k)}.
\]


%
%
%

Table \ref{i2-tbl} describes the irreducible decomposition of the characters $\chi_{W,\sigma}$ and $\chi_{W}$ in type $I_2(m)$. 
 The rows in this table  correspond to individual irreducible characters of $W$, 
 while the columns list the multiplicity of each row in the characters $\chi_{W,\sigma}$.
 
Three distinct patterns arise according to the residue class of $m$ modulo 4. The proof of the given decompositions
is a simple exercise using Frobenius reciprocity and the fact that  $\chi_{W,\sigma}$ is induced from a linear character $\lambda$ of  the centralizer $C_W(\sigma)$ of $\sigma$ in $W$.
The character $\lambda$ is determined by formula $\varrho_W(g) a_\sigma = \lambda(g) a_\sigma$ for $g \in C_W(\sigma)$, which may be explicitly evaluated using (\ref{des-i2}).  In addition, one checks  that if $m$ is odd and $\sigma$ is one of the representative involutions $1$ or $r$ then $C_W(\sigma)$ is $W$ or $\{1,r\} \cong S_2$, and that 
if 
$m$ is even and $\sigma$ is $1$, $w_0$, $r$, or $s$
then $C_W(\sigma)$ is  $W$, $W$, $\{ 1,w_0, r,w_0 r\} \cong S_2\times S_2$, or $\{1,w_0,s,w_0s\} \cong S_2 \times S_2$.  
Evaluating the inner product of  $\lambda$  with the  elements of  $ \Irr(W)$ restricted to these subgroups  provides the desired multiplicities of $\chi_{W,\sigma}$.

Summarizing Table \ref{i2-tbl}, we have the following proposition. 

\begin{proposition}\label{i2-prop}
Suppose $(W,S)$ is of type $I_2(m)$ with $m\geq 3$. 

\begin{enumerate}
\item[(1)] If $m$ is odd then $\chi_{W} =  \phi_{1,0} + \phi_{1,m} + \sum_{k=1}^{\frac{m-1}{2}} \phi_{2,k}=
 \sum_{\psi \in \Irr(W)} \psi$.

\item[(2)] If $m \equiv 2\modu 4)$ then 
$\chi_{W} =   \phi_{1,0} + \phi_{1,m}+ \phi_{1,m/2} + \phi_{1,m/2}' + \sum_{k=1}^{\frac{m-2}{4}} 2\phi_{2,2k-1}.$

\item[(3)] If $m \equiv 0\modu 4)$ then $\chi_{W} = \phi_{1,0} + \phi_{1,m} + \sum_{k=1}^{\frac{m}{4}} 2\phi_{2,2k-1}$.

\end{enumerate}
\end{proposition}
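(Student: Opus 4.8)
\textbf{Proof proposal for Proposition \ref{i2-prop}.}

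The plan is to obtain all three statements as an immediate corollary of Table \ref{i2-tbl}, so the real content is in verifying that table. As the text preceding the proposition indicates, each $\chi_{W,\sigma}$ is an induced character: the conjugacy class of the involution $\sigma$ spans a $\varrho_W$-invariant subspace, and since $\varrho_W$ permutes the basis vectors $a_w$ (up to signs) by conjugation $w \mapsto sws$, the stabilizer of $a_\sigma$ is exactly the centralizer $C_W(\sigma)$, and $\varrho_W(g)a_\sigma = \lambda(g)a_\sigma$ for a \emph{linear} character $\lambda$ of $C_W(\sigma)$. Hence $\chi_{W,\sigma} = \mathrm{Ind}_{C_W(\sigma)}^W \lambda$, and I would compute its irreducible decomposition by Frobenius reciprocity: $\langle \chi_{W,\sigma}, \psi\rangle = \langle \lambda, \mathrm{Res}_{C_W(\sigma)}\psi\rangle$ for each $\psi \in \Irr(W)$.

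First I would pin down the centralizers, splitting on the parity of $m$. For $m$ odd, the representative involutions are $1$ (with $C_W(1) = W$) and $r$ (with $C_W(r) = \{1, r\} \cong S_2$). For $m$ even, the representatives are $1$, $w_0$ (both with centralizer $W$, since $w_0$ is central when $m$ is even), $r$ (with $C_W(r) = \{1, w_0, r, w_0 r\} \cong S_2 \times S_2$), and $s$ (with $C_W(s) = \{1, w_0, s, w_0 s\} \cong S_2 \times S_2$). Next I would determine $\lambda$ in each case from the defining formula $\varrho_W(g)a_\sigma = \lambda(g)a_\sigma$ using the descent data in (\ref{des-i2}): for $g \in C_W(\sigma)$ one has $g\sigma g^{-1} = \sigma$, so $\varrho_W(g)a_\sigma$ is $\pm a_\sigma$, the sign being $-1$ exactly when $g$ is a reflection commuting with $\sigma$ whose action lowers length, i.e. determined by whether the relevant generator lies in $\Des(\sigma)$. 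For $\sigma = 1$ this gives $\lambda = \One$ (so $\chi_{W,1} = \phi_{1,0}$), and for $\sigma = w_0$ one reads off from $\Des(w_0) = \{r,s\}$ that $\lambda = \sgn$ (so $\chi_{W,w_0} = \phi_{1,m}$). For the rank-one and rank-two $S_2$-type centralizers attached to $r$ and $s$, $\lambda$ is the nontrivial linear character picked out by (\ref{des-i2}).

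Then it is a bookkeeping matter to compute $\langle \lambda, \mathrm{Res}\,\psi\rangle$ for each $\psi$: restrict the two or four linear characters and the degree-two characters $\phi_{2,k}$ to the $S_2$ or $S_2 \times S_2$ subgroup, and pair against $\lambda$. Using $\phi_{2,k}((rs)^j r) = 0$ and $\phi_{2,k}((rs)^j) = 2\cos(2\pi jk/m)$, the restriction of $\phi_{2,k}$ to $\{1,r\}$ is the regular representation of $S_2$, contributing multiplicity $1$ to $\chi_{W,r}$ for every $k$; the $m \bmod 4$ dichotomy in parts (2) and (3) then comes out of evaluating $\phi_{2,k}(w_0)$ and matching parities, together with which of $\phi'_{1,m/2}, \phi''_{1,m/2}$ survives. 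Summing the resulting $\chi_{W,\sigma}$ over the representative set of involutions yields $\chi_W$ and hence the three displayed formulas; in the odd case every multiplicity is $1$ across all of $\Irr(W)$, giving the Gelfand-model statement $\chi_W = \sum_{\psi \in \Irr(W)}\psi$. The only mildly delicate point — the ``main obstacle,'' such as it is — is getting the sign of $\lambda$ right on the generators that commute with $\sigma$ and correctly tracking the even-$m$ cases where $w_0$ is central, so that the $m \equiv 2$ versus $m \equiv 0 \pmod 4$ split in the coefficients of $\phi_{2,2k-1}$ and the appearance of $\phi'_{1,m/2}$ come out exactly as stated; this is handled by a careful application of (\ref{des-i2}) and a short character-table computation, with Table \ref{i2-tbl} recording the outcome.
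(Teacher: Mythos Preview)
Your proposal is correct and follows essentially the same approach as the paper: the paper states that Table \ref{i2-tbl} is obtained by recognizing $\chi_{W,\sigma}=\Ind_{C_W(\sigma)}^W\lambda$ for a linear character $\lambda$ determined via (\ref{des-i2}), computing the centralizers exactly as you list them, and then applying Frobenius reciprocity; Proposition \ref{i2-prop} is then just the column sums of that table. Your identification of the ``main obstacle'' (tracking the sign of $\lambda$ on commuting generators and the $m\bmod 4$ split) matches precisely what the paper leaves as the reader's exercise.
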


%

\begin{table}[b]
\[
\ba
&\ba 
&m\text{ odd}
\\
& \barr{l | ll | l}\hline
\text{Character}\quad &  
\chi_{W,1} & \chi_{W,r} & \chi_W
\\
\hline
\phi_{1,0} & 1 & 0 & 1
\\
\phi_{1,m} & 0 & 1 & 1
\\
\phi_{2,k}\ (0<k<\tfrac{m}{2}) & 0 & 1 & 1
\\
\hline\earr
\ea
\\
\\
&\ba
&m\equiv 2 \modu 4)
\\
& \barr{l | llll | l}\hline
\text{Character}\quad &  
\chi_{W,1} & \chi_{W,w_0} & \chi_{W,r} & \chi_{W,s} & \chi_W
\\
\hline
\phi_{1,0} & 1 & 0 & 0 & 0 & 1
\\ 
\phi_{1,m} & 0 & 1 &  0 & 0 & 1
\\
\phi_{2,k}\ (k\text{ odd}) & 0 & 0 & 1 & 1 & 2 
\\
\phi_{2,k}\ (k\text{ even}) & 0 & 0 & 0 & 0 & 0 
\\
\phi'_{1,m/2} & 0 & 0 & 0 & 1 & 1
\\
\phi''_{1,m/2} & 0 & 0 & 1 & 0 & 1
\\
\hline\earr
\ea
\\
\\
&
\ba &m \equiv 0 \modu 4)
\\
& \barr{l | llll | l}\hline
\text{Character}\quad &  
\chi_{W,1} & \chi_{W,w_0} & \chi_{W,r} & \chi_{W,s} & \chi_W
\\
\hline
\phi_{1,0} & 1 & 0 & 0 & 0 & 1
\\
\phi_{1,m} & 0 & 1 &  0 & 0 & 1
\\
\phi_{2,k}\ (k\text{ odd}) & 0 & 0 & 1 & 1 & 2 
\\
\phi_{2,k}\ (k\text{ even}) & 0 & 0 & 0 & 0 & 0 
\\
\phi'_{1,m/2} & 0 & 0 & 0 & 0 & 0
\\
\phi''_{1,m/2} & 0 & 0 & 0 & 0 & 0
\\
\hline\earr
\ea
\ea\] 
\caption{Irreducible multiplicities of $\chi_{W,\sigma}$ and $\chi_{W}$ in type $I_2(m)$
}
\label{i2-tbl}
\end{table}

\section{Fourier transform matrices and the proof of Theorem \ref{main-thm}}
\label{FT-sect}

\def\T{\textbf{F}}

Here we describe the Fourier transform matrices associated to $\Uch(W)$ for each finite, irreducible 
Coxeter system $(W,S)$, and derive from this setup the proof of Theorem \ref{main-thm}.
In  the crystallographic case, the relevant definitions are well-established and due originally to Lusztig \cite{L}. 
We must take more care to describe the associated matrices for the non-crystallographic types, as the literature \cite{Zyk,L_exotic,Malle74,Malle_Imprimitiver} presenting this heuristic theory is not nearly as cohesive or extensive.

We  first describe how to attach to each family  in $ \Uch(W)$ (see Section \ref{family-sect}) a Fourier transform matrix $\FT$. 
The Fourier transform matrix of $\Uch(W)$ is  subsequently constructed as the direct sum of such matrices $\FT$ over all families.
In Section \ref{ft-last-sect} we  describe some notable properties of this Fourier transform and discuss in what sense  these properties indicate the choices of matrices $\FT$ to be ``canonical.''

\subsection{Fourier transform matrices in crystallographic types}
\label{cry-ft-sect}

As described in Section \ref{family-sect}, when $(W,S)$ is crystallographic, every family 
in $\Uch(W)$ is parametrized by a set $\sM(\Gamma)$ for some finite group $\Gamma$. (Recall from (\ref{MGamma})  the definition of this set.) Lusztig \cite{L} defines the Fourier transform associated to a 
  family indexed by $\sM(\Gamma)$ as the matrix
\be\label{ftgamma} \FT_\Gamma \omdef= \( \{ m,m'\}\)_{m,m' \in \sM(\Gamma)}\ee whose entries are the numbers
\[ \{ (x,\sigma),(y,\tau)\} \omdef = \frac{1}{|C_\Gamma(x)|} \frac{1}{|C_\Gamma(y)|} \sum_{\substack{g \in \Gamma \\
x\cdot gyg^{-1} = gyg^{-1}\cdot x}}
\sigma\(gyg^{-1}\) \tau\(g^{-1}x^{-1}g\)
\] for $(x,\sigma),(y,\tau) \in \sM(\Gamma)$.
Carter helpfully provides an explicit description of the sets $\sM(\Gamma)$ and the accompanying matrices $\FT_\Gamma$ in the  cases when $\Gamma = S_n$ and $n\leq 4$ (as well a partial matrix in the case $\Gamma = S_5$)  \cite[\S13.6]{C}; see also the overview in \cite[\S12.3]{C}.
We review the frequently occuring case $\Gamma = S_2$ in the following example.

\begin{example}\label{FD-ex}
 If  $s$ denotes the nontrivial element of $S_2$, then $\sM(S_2) = \{ (1,\One), (1,\sgn), (s,\One), (s,\sgn) \}$, 
and  with respect to the order in which we just listed $\sM(S_2)$, the corresponding matrix  is
\[ 
\FT_{S_2} = \frac{1}{2}\(\barr{rrrr} 1 & 1 & 1 &1 \\ 1 & 1 & -1 & -1 \\ 1 & -1 & 1 & -1 \\ 1 & -1 & -1 & 1 \earr\).
\]
\end{example}

\def\D{\textbf{D}}
\subsection{Fourier transform matrices in type $I_2(m)$}\label{i2ft}

The description of the Fourier transform in this case comes from Lusztig's paper \cite{L_exotic}.
Assume $(W,S)$ is of type $I_2(m)$ for an integer $m\geq 3$ and recall the explicit construction of $\Uch(W)$ given in Section \ref{i2-unipotent}.  As noted there, $\Uch(W)$ has only three families, two of which have size one: $\{ \One\}$ and $\{ \sgn\}$.  The Fourier transform  of both 1-element families is the $1\times 1$ identity matrix.

Let $\cF = \Uch(W)\setminus \{ \One,\sgn\}$ denote the remaining family, and define $X = X' \cup X''$, where $X'$ and $X''$ are the disjoint sets given by
\[\ba X' &= \Bigl\{\text{Pairs of integers $(i,j)$ with $i+j<m$ and either $0<i<j<m$ or $0=i<j < \tfrac{m}{2}$}\Bigr\},
\\
 X'' &= \begin{cases} \{(0,\tfrac{m}{2})',(0,\tfrac{m}{2})''\},&\text{if $m$ is even}, \\ \varnothing,&\text{otherwise}.\end{cases}
 \ea
 \]
 As is clear from our notation in Section \ref{i2-unipotent}, $X$ naturally parametrizes $\cF$.  Write $\xi= \exp\(\frac{2\pi \sqrt{-1}}{m}\)$
for the standard $m^{\mathrm{th}}$ root unity.
The Fourier transform of $\cF$, as defined by Lusztig \cite{L_exotic},
is then the matrix
 \[ \D_m \omdef= \( \{ x,x'\}\)_{x,x' \in X}\]  whose entries are the numbers $\{x,x'\}$ 
 given by 
\[\{(i,j),(k,l)\} = \tfrac{1}{m} \( \xi^{jk-il} + \xi^{-jk+il} - \xi^{-ik+jl} - \xi^{ik-jl}\),\qquad\text{for $(i,j),(k,l) \in X'$},\]
and if $m$ is even and $(i,j) \in X'$, by
\[\{(i,j),(0,\tfrac{m}{2})'\}  = \{(i,j),(0,\tfrac{m}{2})''\}=\{(0,\tfrac{m}{2})',(i,j)\}  = \{(0,\tfrac{m}{2})'',(i,j)\} = \tfrac{(-1)^i - (-1)^j}{m},\]
\[
\{(0,\tfrac{m}{2})',(0,\tfrac{m}{2})'\} = \{(0,\tfrac{m}{2})'',(0,\tfrac{m}{2})''\} = \tfrac{1-(-1)^{m/2}}{2m} +  \tfrac{1}{2},\]
\[
\{(0,\tfrac{m}{2})',(0,\tfrac{m}{2})''\} = \{(0,\tfrac{m}{2})'',(0,\tfrac{m}{2})'\} = \tfrac{1-(-1)^{m/2}}{2m} -  \tfrac{1}{2}  .
\]
We have labeled this  matrix $\D_m$ as a mnemonic for  ``dihedral Fourier transform.''

\begin{example}\label{d-ex}
It is helpful to review some examples of this construction.
\begin{enumerate}
\item[(i)] If $m=3$ then $X = \{ (0,1)\}$ and $\D_3$ is the $1\times 1$ identity matrix.

\item[(ii)] If $m=4$ then $X = \{ (0,1),(0,2)',(0,2)'',(1,2)\} $ and 
with respect to an appropriate ordering of indices we have $\D_4 = \FT_{S_2}$.
In a similar way, one finds that $\D_6 = \FT_{S_3}$. 
These equalities are consistent with the fact that the Coxeter systems of types $I_2(4)$ and $I_2(6)$ are isomorphic to those of types  $BC_2$ and $G_2$, whose nontrivial families  of unipotent characters are parametrized by $\sM(S_2)$ and $\sM(S_3)$.

\item[(iii)] If $m=5$ then $X = \{ (0,1), (0,2), (1,2), (1,3)\}$ and   with respect to the order in which we just listed $X$, one computes
\[ \D_5 = \frac{1}{\sqrt{5}} \(\barr{rrrr} \wt \lambda &\lambda & 1 & 1 \\ \lambda & \wt \lambda & -1 & -1 \\ 1 & -1 & \lambda & -\wt\lambda \\ 1 & -1 & -\wt\lambda & \lambda \earr\), 
\qquad\text{where }
 \lambda = \frac{\sqrt{5}+1}{2}
 \text{ and } \wt\lambda =\frac{\sqrt{5}-1}{2}.\]
 This is precisely the matrix listed in \cite[Eq.\ (7.3)]{Zyk} and in \cite[\S3.10]{L_exotic}.


\end{enumerate}
 \end{example}

\subsection{Fourier transform matrices in types $H_3$ and $H_4$}\label{h-ft-sect}

The definition of the Fourier transform in the remaining non-crystallographic cases comes from the papers \cite{Zyk,Malle74,Malle_Imprimitiver}.
In type $H_3$, $\Uch(W)$ has four 1-element families given by 
$\{ \phi_{1,0}\},$  $\{ \phi_{1,15}\},$ $\{ \phi_{5,2}\},$ and $\{ \phi_{5,5}\}.$
In type $H_4$, $\Uch(W)$ has six 1-element families given by 
$ \{ \phi_{1,0}\},$ $ \{ \phi_{1,60}\},$ $ \{ \phi_{25,4}\},$ $ \{ \phi_{25,16}\},$ $
\{ \phi_{36,5}\},$ and $ \{ \phi_{36,15}\}.$
These  families are all subsets of $\Irr(W)$ and we  have  labeled their (necessarily special) elements according to our conventions in Sections \ref{h3-sect} and \ref{h4-sect}.  To all such 1-element families, the associated Fourier transform matrix is the $1\times1 $ identity matrix.


In type $H_3$ (respectively, $H_4$), $\Uch(W)$ has three (respectively, six) 4-element families.  One such family is 
 exceptional in type $H_3$ (see Definition \ref{ex-def}),
 while two are exceptional in type $H_4$.  The following observations concerning these families are derived from the parametrizations of $\Uch(W)$ provided by the {\tt{UnipotentCharacters}} command in CHEVIE \cite{CHEVIE}.   
First, in both types
the exceptional families always consist of four elements 
\[\Phi_{(1,\One)},\quad  
\Phi_{(1,\sgn)},\quad
\Phi_{(s,\One)},\quad
\Phi_{(s,\sgn)}\]
which can be indexed by the set $\sM(S_2)$, such that 
\begin{enumerate}
\item[$\bullet$] $\Phi_{(1,\One)}, \Phi_{(1,\sgn)} \in \Irr(W)$, with $\Phi_{(1,\One)}$ special and $\Deg\(\Phi_{(1,\One)}\) = \Deg\(\Phi_{(1,\sgn)}\) $.
\item[$\bullet$] $\Phi_{(s,\One)}, \Phi_{(s,\sgn)} \notin \Irr(W)$, with 
$\Deg\(\Phi_{(s,\One)}\) = \Deg\(\Phi_{(s,\sgn)}\)$ and 
$\begin{cases} \Eig\(\Phi_{(s,\One)}\) = i, \\ \Eig\(\Phi_{(s,\sgn)}\) = -i. \end{cases}$

\end{enumerate}
\begin{remark}
The computer algebra system CHEVIE stores a large amount of data associated to $\Uch(W)$, which can be accessed by combining the commands {\tt{Display}} and {\tt{UnipotentCharacters}}. The parametrization by $\sM(S_2)$ just given, however, is not  
included in CHEVIE, though the listed properties uniquely determine which index $(x,\sigma) \in \sM(S_2)$ goes to which $\Phi \in \cF$ for any exceptional 4-element family $\cF$.
\end{remark}
The characters $\Phi_{(1,\One)}, \Phi_{(1,\sgn)} \in \Irr(W)$ may be respectively either $\phi_{4,3}$, $\phi_{4,4}$ in type $H_3$ or
$\phi_{16,3}$, $\phi_{16,6}$ or $\phi_{16,18}$, $\phi_{16,21}$ in type $H_4$. There is no established notation for the formal elements $\Phi_{(s,\One)}$, $\Phi_{(s,\sgn)}$  in each family, however.
 
The non-exceptional 4-element families in types $H_3$ and $H_4$, on the other hand, always consist of four elements 
\[\Phi_{(0,1)}, \quad\Phi_{(0,2)}, \quad\Phi_{(1,2)}, \quad\Phi_{(1,3)}\] which can be indexed by the set $X$ in Section \ref{i2ft} with $m=5$, such that if $\xi = \exp \(\frac{2\pi \sqrt{-1}}{5}\)$ is a fifth root of unity, then
\begin{enumerate}
\item[$\bullet$] $\Phi_{(0,1)}, \Phi_{(0,2)} \in \Irr(W)$, with $\Phi_{(0,1)}$ special
and $\Deg\(\Phi_{(0,1)}\) \neq \Deg\(\Phi_{(0,2)}\) $.
\item[$\bullet$] $\Phi_{(1,2)}, \Phi_{(1,3)} \notin \Irr(W)$, with 
$\Deg\(\Phi_{(1,2)}\) = \Deg\(\Phi_{(1,3)}\)$ and 
$\begin{cases} \Eig\(\Phi_{(1,2)}\) = \xi^3, \\ \Eig\(\Phi_{(1,3)}\) = \xi^2. \end{cases}$
\end{enumerate}
  This parametrization by $X$, though uniquely determined for each non-exceptional family, is again not actually 
listed in CHEVIE.
    The characters $\Phi_{(0,1)}, \Phi_{(0,2)} \in \Irr(W)$ may be either $\phi_{3,1}$, $\phi_{3,3}$ or $\phi_{3,6}$, $\phi_{3,8}$ in type $H_3$ or any of the pairs $\phi_{4,1}$, $\phi_{4,7}$ or $\phi_{4,31}$, $\phi_{4,37}$ or  $\phi_{9,2}$, $\phi_{9,6}$ or $\phi_{9,22}$, $\phi_{9,26}$  in type $H_4$. 
  There is again no established notation for the formal elements $\Phi_{(1,2)}$, $\Phi_{(1,3)}$ 
 in each family. 
 
 The Fourier transforms of these families are now defined thus: if  $\cF\subset \Uch(W)$ is a 4-element family in type $H_3$ or $H_4$,
  then its Fourier transform matrix is 
  \be\FT = \begin{cases} \FT_{S_2},&\text{if $\cF$ is exceptional (see Example \ref{FD-ex})}, \\
  \D_5,&\text{if $\cF$ is not exceptional (see Example \ref{d-ex}(iii)).}\end{cases}\ee
Note that these assignments  make  sense because we have indicated how each 4-element family is indexed by the same set as the corresponding matrix.

\begin{remark}
In our definition of the Fourier transform matrix for the non-exceptional 4-element families, we follow the convention of \cite[\S7]{Zyk}. The Fourier transform of the exceptional 4-element families in types $H_3$ and $H_4$ seems less well-established in the literature. The matrix  assigned  to these families here  is chosen to be identical to the Fourier transform matrix of the other exceptional families in types $E_7$ and $E_8$. We will say more about the ``correctness'' of this choice in the remarks following Theorem \ref{p4} below.
\end{remark}

These conventions attach a Fourier transform matrix to all but  one remaining family in type $H_4$.
In this type, $\Uch(W)$ has a single  family $\cF$ of size 74; the  intersection of this family with $\Irr(W)$ has size 16 and its unique special element is the character $\phi_{24,6} \in \Irr(W)$.
The Fourier transform  of this family 
is constructed by 
Malle as the matrix $S$ in his paper  \cite{Malle74}.
To do calculations with this matrix, one  needs to be able to access it  in some computer format, and the  algebra package CHEVIE fortunately provides this capability. In detail,
 one can obtain the $74\times 74$ Fourier transform matrix  of  $\cF$  by the following sequence of CHEVIE commands in GAP:
\[ \ba &{\tt{W := CoxeterGroup(``H",4);}}
\\ &{\tt{Uch := UnipotentCharacters(W);}}
\\ &{\tt{F := Uch.families[13];}}
\\ &{\tt{M := F.fourierMat * MatPerm(F.perm,74);}}
\ea\]
The odd-looking multiplication by ${\tt{MatPerm(F.perm,74)}}$ in the last line 
has to do with the indexing conventions of the $\tt{fourierMat}$ field in CHEVIE.
The code given here produces a matrix $\tt{M}$ whose rows and columns have the same indices as $\tt{Uch}$, and which is identical to the one in \cite{Malle74}.

\subsection{Observations and consequences}\label{ft-last-sect}

From 
the preceding three subsections, we know of a Fourier transform matrix attached to each family $\cF$ in $\Uch(W)$ for each finite, irreducible Coxeter system $(W,S)$. The \emph{Fourier transform matrix} of $\Uch(W)$, we reiterate, is  the direct sum of these matrices over all families $\cF$. 
By construction, this $\Uch(W)$-indexed matrix satisfies property (P2) in the introduction; i.e., it is block diagonal with respect to the decomposition of $\Uch(W)$ into families.
The following theorem
explains precisely how 
 this matrix
also satisfies property (P1) in the introduction.
This statement should be attributed to Lusztig and Malle via a combination of results  appearing in the papers \cite{L,L_exotic,Malle74,Malle_Imprimitiver};
see the proof of \cite[Theorem 6.9]{GeckMalle} for a  detailed bibliography.

\begin{theorem}\label{p1} Let $(W,S)$ be a finite, irreducible Coxeter system with associated Fourier transform matrix $\FT$, and write $j$ for the (permutation matrix of the) involution of $\Uch(W)$ defined in Proposition \ref{j-prop}. Then the composition $\FT\circ j$ transforms for the vector of fake degrees of $\Uch(W)$ to the vector of (generic) degrees.
\end{theorem}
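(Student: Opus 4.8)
The plan is to pass to the block decomposition of $\FT$, reduce to one family at a time, invoke the existing literature for the crystallographic families, and dispatch the remaining non-crystallographic families by short explicit computations.

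\textbf{Reduction to a single family.} Write $v = (\FakeDeg(\Phi))_{\Phi \in \Uch(W)}$ and $d = (\Deg(\Phi))_{\Phi \in \Uch(W)}$ for the vectors of fake degrees and generic degrees; the claim is that $\FT(jv) = d$. By property (P2), $\FT = \bigoplus_{\cF} \FT_\cF$ with $\cF$ ranging over the families in $\Uch(W)$, and by Proposition \ref{j-prop} each family is $j$-stable, so $j = \bigoplus_\cF j_\cF$ is the corresponding block sum of permutation matrices. Hence it suffices to prove $\FT_\cF (j_\cF v_\cF) = d_\cF$ for each $\cF$, where $v_\cF,d_\cF$ denote the restrictions to $\cF$; moreover $v_\cF$ is supported on $\cF \cap \Irr(W)$ since $\FakeDeg$ vanishes off $\Irr(W)$. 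When $\cF=\{\Phi\}$ is a singleton, $\FT_\cF$ and $j_\cF$ are trivial and the assertion reduces to $\FakeDeg(\Phi)=\Deg(\Phi)$, which holds for every singleton-family character by construction of $\Uch(W)$ (Lusztig \cite{L,L_app}; it is visible in the degree polynomials tabulated in \cite[\S13.8 and \S13.9]{C} and accessible in CHEVIE \cite{CHEVIE}).

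\textbf{Crystallographic families.} If $(W,S)$ is crystallographic, every non-singleton family $\cF$ is parametrized by some $\sM(\Gamma)$ with $\Gamma$ abelian (classical types) or $\Gamma = S_k$, $k\le 5$ (exceptional types), and $\FT_\cF = \FT_\Gamma$ is the matrix $(\ref{ftgamma})$; here $j_\cF$ is the identity except on the exceptional families (those of $E_7$ and $E_8$), where it is the transposition of the two irreducible members recorded in Section \ref{invol-sect}. The identity $\FT_\Gamma(j_\cF v_\cF)=d_\cF$ for these families is the family-wise content of Lusztig's determination of the unipotent generic degrees; I would cite \cite[Ch.~4]{L} and \cite[\S13.8 and \S13.9]{C} together with Beynon--Lusztig \cite{LuBe} for the action of $j$, referring the reader to the bibliography assembled in the proof of \cite[Theorem 6.9]{GeckMalle}.

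\textbf{Non-crystallographic families.} In type $I_2(m)$ the permutation $j$ is the identity, the families $\{\One\}$ and $\{\sgn\}$ are handled by $\FakeDeg(\One)=1=\Deg(\One)$ and $\FakeDeg(\sgn)=x^m=\Deg(\sgn)$, and for the remaining family, with Fourier matrix $\D_m$, one substitutes the explicit fake degrees of Section \ref{i2-unipotent} (namely $x^j+x^{m-j}$ for $\Phi_{(0,j)}$, the value $x^{m/2}$ for $\Phi'_{(0,m/2)}$ and $\Phi''_{(0,m/2)}$ when $m$ is even, and $0$ for $\Phi_{(i,j)}$ with $i>0$) into $\D_m v_\cF$ and checks, using the entries of $\D_m$ from Section \ref{i2ft} and elementary identities among $m$th roots of unity, that the result is the list of generic degrees in Section \ref{i2-unipotent}; this is the computation Lusztig carries out in \cite{L_exotic}. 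In types $H_3$ and $H_4$ the non-exceptional $4$-element families are indexed by the $m=5$ set $X$ with matrix $\D_5$ and $j$ acts trivially on them, so the required identity is the $m=5$ instance of the previous computation, with fake degrees read off from Sections \ref{h3-sect}, \ref{h4-sect} or CHEVIE; each exceptional $4$-element family is indexed by $\sM(S_2)$ with matrix $\FT_{S_2}$, with $j_\cF$ the transposition $\Phi_{(1,\One)}\leftrightarrow\Phi_{(1,\sgn)}$ (which fixes $\Phi_{(s,\One)},\Phi_{(s,\sgn)}\notin\Irr(W)$, the only elements with nonzero fake degree being $\Phi_{(1,\One)},\Phi_{(1,\sgn)}$, whose fake degrees are reflections of one another), so the check is a single $4\times 4$ matrix--vector product; and the unique size-$74$ family in $H_4$, on which $j$ is the identity, has Fourier matrix Malle's $74\times 74$ matrix $S$ \cite{Malle74}, for which the identity is verified in that paper and is reproducible with the CHEVIE commands of Section \ref{h-ft-sect}.

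\textbf{The main obstacle.} There is no genuine obstacle of substance: the statement is classical, and the proof is essentially organizational --- recasting known family-wise identities in the uniform language of this paper and performing the few small computations ($\D_m$ in the dihedral case, equivalently $\D_5$ for the non-exceptional $H_3,H_4$ families, and the $\FT_{S_2}$ check for the exceptional ones), with the $74\times 74$ case delegated to \cite{Malle74}. The one point needing care is the matching of conventions, in particular pinning down $j_\cF$ on the exceptional families and the identification of the non-exceptional $4$-element families with the set $X$ of Section \ref{i2ft} --- precisely the bookkeeping already carried out in Sections \ref{invol-sect} and \ref{h-ft-sect}.
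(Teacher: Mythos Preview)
Your proposal is correct and takes essentially the same approach as the paper: the paper does not supply its own proof of this theorem, but simply attributes the result to Lusztig and Malle via \cite{L,L_exotic,Malle74,Malle_Imprimitiver} and directs the reader to the bibliography in the proof of \cite[Theorem 6.9]{GeckMalle}. Your write-up is a more detailed and organized version of exactly this citation-based argument, carrying out the family-by-family reduction explicitly.

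One small imprecision worth tightening: for the non-exceptional 4-element families in types $H_3$ and $H_4$, the Fourier matrix is indeed $\D_5$, but the fake and generic degree polynomials attached to these families are \emph{not} those of the $I_2(5)$ family; they are the $H_3$/$H_4$-specific polynomials. So the verification is not literally ``the $m=5$ instance of the previous computation'' but rather a separate $4\times 4$ matrix--vector check with the same matrix $\D_5$ applied to different vectors. You already signal this by saying the fake degrees are to be read from Sections \ref{h3-sect}, \ref{h4-sect} or CHEVIE, so the point is only one of phrasing.
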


\begin{remark}
This result is mentioned in several places, but  sometimes  imprecisely. 
For example, Carter asserts, without any mention of $j$, that the Fourier transform matrix of $\Uch(W)$ transforms the vector of fake degees to the vector of actual degrees whenever $(W,S)$ is a Weyl group  \cite[\S13.6]{C}. This statement, at least as we interpret it, is not strictly true in types $E_7$ and $E_8$. In particular, one can check that the Fourier transform matrix $\FT$ that Carter attaches to the three exceptional families in these types (see Section \ref{family-sect}) does not literally transform the vector of fake degrees
to the corresponding vector of (generic) degrees as listed in \cite[\S13.8]{C}.  But $\FT$ does transform a nontrivial permutation of the fake degrees to the (generic) degrees.
\end{remark}


\def\F{\textbf{t}}

The preceding theorem gives one reason to consider the particular 
 Fourier matrices assigned to $\Uch(W)$  
as somehow ``canonical,'' and our next result provides another.
To state this, we  first must recall  Lusztig's definition of a \emph{fusion datum} \cite{L_exotic}.

\begin{definition}\label{fd-def}
Let $X$ be a finite set with a distinguished 
element $x_0$, and suppose 
\begin{enumerate}
\item[$\bullet$] $\Delta$ is  the matrix of an involutory permutation of $X$ with $x_0$ as a fixed point;
\item[$\bullet$] $\FT$ is  a real symmetric matrix indexed by $X$;
\item[$\bullet$] $\T$ is a diagonal  matrix indexed by $X$ whose diagonal entries are complex roots of unity.
\end{enumerate}
The  tuple $\(X,x_0,\Delta,\FT,\T\)$ is a \emph{fusion datum} if
the following axioms hold:
\begin{enumerate}
\item[](Commutability). $\FT = \Delta \FT\Delta$  and $\T^{-1} = \Delta \T \Delta$.
\item[](Positivity). $\FT_{x,x_0} > 0$ for all $x \in X$ and $\T_{x_0,x_0} = 1$.
\item[](Modularity). $\FT^2 = (\T\Delta\FT)^3 = 1$.
\item[](Integrality). $\sum_{w \in X} \frac{\FT_{x,w} \FT_{y,w} \FT_{z,w}}{\FT_{x_0,w}} \in \NN$ for all $x,y,z \in X$.

\end{enumerate}
\end{definition}

\begin{remarks} A few comments are helpful in unpacking this not altogether transparent construction.
\begin{enumerate}
\item[(a)]
 In \cite{L_exotic}, Lusztig actually presents a more general definition of a fusion datum which involves two involutions $\sharp$, $\flat$ of $X$ in place of $\Delta$. This  definition reduces to ours  when $\sharp = \flat = \Delta$.
 \item[(b)] The modularity axiom is so-named as it  requires that the matrices $\Delta$, $\T$,  $\FT$ determine  a unitary representation of the modular group $\PSL_2(\ZZ)$.  

\item[(c)] The integrality axiom allows one to define an algebra  structure with nonnegative integer structure coefficients on the complex vector space generated by $X$. More specifically, the axiom asserts that one can give this vector space the structure of a \emph{based ring} in the sense of \cite{L3}; see the discussion in \cite[\S7]{GeckMalle}.
\end{enumerate}
  \end{remarks}

Before proceeding we note
the following short lemma, which identifies a common type of fusion datum based on the set $\sM(\Gamma)$ attached to a finite group $\Gamma$.

 \begin{lemma}\label{fd-ex-lem} Let $\Gamma$ be  a finite group equal to $(\ZZ/2\ZZ)^n$ or $S_n$ for some  $n\geq 0$, and suppose
\begin{enumerate}
\item[$\bullet$] $X=\sM(\Gamma)$ as defined by (\ref{MGamma})
and  $x_0 = (1,\One) \in X$;

\item[$\bullet$] $\Delta : X\to X$ is the involution defined by $\Delta : (x,\sigma) \mapsto (x,\overline{\sigma})$;

\item[$\bullet$]  $\FT = \FT_\Gamma$ as in (\ref{ftgamma}); 

\item[$\bullet$]  $\T = \diag (t_x)_{x \in X}$ where $t_{(x,\sigma)} = \frac{\sigma(x)}{\sigma(1)}$ for $(x,\sigma) \in \sM(\Gamma)$.
\end{enumerate}
Then $(X,x_0,\Delta,\FT,\T)$ is a fusion datum in the sense of Definition \ref{fd-def}.
\end{lemma}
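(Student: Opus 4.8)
The plan is to reduce everything to Lusztig's analysis of the non-abelian Fourier transform and then dispatch the handful of remaining axioms by direct computation. The first step is to observe that both families of groups decompose into symmetric-group factors: $(\ZZ/2\ZZ)^n \cong S_2^{\times n}$, and for any finite groups $\Gamma_1,\Gamma_2$ one has $\sM(\Gamma_1\times\Gamma_2) = \sM(\Gamma_1)\times\sM(\Gamma_2)$, under which the distinguished point $(1,\One)$, the involution $\Delta$, the matrix $\FT_\Gamma$ of (\ref{ftgamma}), and the eigenvalue matrix $\T$ all factor as tensor products of the corresponding data for $\Gamma_1$ and $\Gamma_2$ (the commuting condition, the values of tensor-product characters, and the centralizer orders in the definition of $\{(x,\sigma),(y,\tau)\}$ all split coordinatewise). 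Each of the four fusion-datum axioms is manifestly preserved under such tensor products: $\FT^2 = 1$ and $(\T\Delta\FT)^3 = 1$ because the factors satisfy them; positivity because $\FT_{x,x_0}$ is a product of positive numbers; integrality because the structure sum factors into a product of nonnegative integers; commutability because both sides factor. Hence it suffices to prove the lemma when $\Gamma = S_n$.

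For $\Gamma = S_n$, the substantive facts are that $\FT_{S_n}$ is a real symmetric matrix satisfying $\FT_{S_n}^2 = 1$, that $\FT_{S_n}$, $\T$, and $\Delta$ together satisfy the modular relation $(\T\Delta\FT)^3 = 1$, and that $\sum_{w\in X}\FT_{x,w}\FT_{y,w}\FT_{z,w}/\FT_{x_0,w}\in\NN$ for all $x,y,z$. All of these are due to Lusztig \cite[Chapter 4]{L}: in his language this is exactly the assertion that $\sM(\Gamma)$ carries the structure of a based ring in the sense of \cite{L3} on which the Frobenius eigenvalues induce an action of $\PSL_2(\ZZ)$; see also the treatment of fusion data in \cite{L_exotic} and the exposition in \cite[\S7]{GeckMalle}. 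This is the part of the argument with genuine content, and if one wanted a self-contained proof, verifying these relations — which rests on Lusztig's combinatorics of symbols — would be the main obstacle; here we simply take it as input.

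What remains are the elementary axioms involving $\Delta$ and $x_0$, which I would check directly. First, $\Delta$ is a well-defined involutory permutation of $\sM(\Gamma)$: the assignment $(x,\sigma)\mapsto(x,\overline\sigma)$ respects the relation (\ref{MGamma}) since $\overline{\sigma^g} = (\overline\sigma)^{g}$, it is an involution since $\overline{\overline\sigma} = \sigma$, and it fixes $x_0 = (1,\One)$ since $\overline{\One} = \One$. For commutability, replacing $\sigma,\tau$ by $\overline\sigma,\overline\tau$ in the sum defining $\{(x,\sigma),(y,\tau)\}$ conjugates every summand, so $\{(x,\overline\sigma),(y,\overline\tau)\} = \overline{\{(x,\sigma),(y,\tau)\}}$, and this equals $\{(x,\sigma),(y,\tau)\}$ by the reality of $\FT$ recalled above; hence $\Delta\FT\Delta = \FT$. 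Similarly $t_{(x,\sigma)} = \sigma(x)/\sigma(1)$ is the scalar by which the central element $x$ acts on an irreducible $C_\Gamma(x)$-module affording $\sigma$, hence a root of unity, so $t_{(x,\overline\sigma)} = \overline{t_{(x,\sigma)}} = t_{(x,\sigma)}^{-1}$ and $\Delta\T\Delta = \T^{-1}$. Finally, positivity holds because $\{(1,\One),(y,\tau)\} = \tau(1)/|C_\Gamma(y)| > 0$ (the commuting condition being vacuous when $x = 1$) and $\T_{x_0,x_0} = \One(1)/\One(1) = 1$. Combining these elementary checks with the input from Lusztig, and with the tensor-product reduction, yields the lemma.
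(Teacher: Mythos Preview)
Your direct verifications of commutability, positivity, and the well-definedness of $\Delta$ are all correct, and the tensor-product reduction of $(\ZZ/2\ZZ)^n$ to copies of $S_2$ is a valid (if slightly roundabout) way to handle that case. However, there is a genuine gap in your appeal to Lusztig for the modularity axiom $(\T\Delta\FT)^3 = 1$.

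The reference you want is \cite[Proposition 1.6]{L_exotic} rather than \cite[Chapter 4]{L}; the latter establishes $\FT_\Gamma^2 = 1$ and related pairing properties but not the full modular relation. More to the point, Lusztig's fusion-datum framework in \cite{L_exotic} uses \emph{two} involutions $\sharp, \flat$ in place of the single $\Delta$ of Definition~\ref{fd-def} (see the remark following that definition), and his Proposition 1.6 is stated for $\sM(\Gamma)$ with, roughly, $\sharp : (x,\sigma) \mapsto (x^{-1},\sigma)$ alongside $\flat : (x,\sigma) \mapsto (x,\overline\sigma)$. To extract the lemma in the form of Definition~\ref{fd-def} you must show that these two involutions coincide on $\sM(\Gamma)$, i.e., that $(x,\overline\sigma) \sim (x^{-1},\sigma)$ for all $(x,\sigma)$. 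For $(\ZZ/2\ZZ)^n$ both maps are the identity and there is nothing to do, but for $\Gamma = S_n$ this is a nontrivial statement about centralizers: one must produce $g \in S_n$ with $gxg^{-1} = x^{-1}$ and $\sigma^g = \overline\sigma$ for every $\sigma \in \Irr(C_{S_n}(x))$. The paper supplies exactly this argument, via the semidirect-product structure $C_{S_n}(x) \cong \bigl(\prod_r S_{m_r}\bigr) \ltimes A$ with $A$ the abelian group generated by the cycles of $x$, together with the standard description of the irreducible characters of a semidirect product with abelian normal subgroup. Your citation to Lusztig sweeps this step under the rug, and without it the modularity axiom in the single-involution form required here has not been established.
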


\begin{proof}
This is equivalent to \cite[Proposition 1.6]{L_exotic}, provided we show 
 that $ (x,\overline\sigma)\sim (x^{-1},\sigma)$ for each  $(x,\sigma) \in \sM(\Gamma)$ 
when $\Gamma$ is either $(\ZZ/2\ZZ)^n$ or $S_n$. This is immediate in the first case, so assume $\Gamma = S_n$ is a symmetric group.

Let $\lambda$ be a partition of $n$ and let $x \in S_n$ be a permutation with cycle type $\lambda$.
In the proof of Proposition \ref{torics-prop}, we noted that $C_{S_n}(x)$ is isomorphic to  the direct product of wreath products $\prod_{r\geq 1} G(r,m_r)$, where $m_r$ is the number of parts of $\lambda$ with size $r$.  
A less explicit version of this statement goes as follows: the cycles of $x$ generate an abelian group $A \cong \prod_{i\geq 1} \ZZ/\lambda_i\ZZ$, and $C_{S_n}(x)$ is isomorphic to a semidirect product of the form $\(S_{m_1} \times S_{m_2} \times \dots\) \ltimes A$.
It is easy to see that one can choose a permutation $g \in S_n$ which commutes with the left factor of this semidirect product and which has $gxg^{-1} = x^{-1}$ and in fact $g^{-1}yg = y^{-1}$ for all $ y \in A$.
Noting these properties, it follows from  the standard construction of the irreducible characters of a semidirect product with an abelian normal subgroup (see \cite[Exercise XVIII.7]{Lang}) that $\sigma^g = \overline \sigma$ for any $\sigma \in \Irr(C_{S_n}(x))$.   
We conclude that $(x,\overline\sigma) = (gxg^{-1},\overline\sigma^g) = (x^{-1},\sigma)$ for all $(x,\sigma) \in \sM(S_n)$, as required.
\end{proof}

With these preliminaries in tow, we may now state  
the following noteworthy result due to  Geck, Lusztig, and Malle \cite{GeckMalle,L_exotic, Malle74}, showing how each family in $\Uch(W)$, combined with its attached Fourier transform matrix, 
possesses naturally the structure of a fusion datum.
This observation elaborates property (P4) of the Fourier transform matrix noted in the introduction, and is essentially a special case of \cite[Theorem 6.9]{GeckMalle} (although as stated, that result drops the positivity axiom of a fusion datum and uses somewhat different terminology.)

\begin{theorem} \label{p4}
Let $(W,S)$ be a finite, irreducible Coxeter system.  Suppose
\begin{enumerate}
\item[$\bullet$] $\cF$ is a family in $\Uch(W)$ and $\Phi_0 \in \cF$ is its unique special element;
\item[$\bullet$] $\Delta$ is the restriction to $\cF$ of the involution of $\Uch(W)$ given in Proposition \ref{cmplx-prop};
\item[$\bullet$] $\FT$ is the Fourier transform matrix of $\cF$, as defined in the Sections \ref{cry-ft-sect}, \ref{i2ft},  \ref{h-ft-sect};
\item[$\bullet$] $\T = \diag\(\Eig(\Phi)\)_{\Phi \in \cF}$ is the diagonal matrix of Frobenius eigenvalues of $\Phi \in \cF$.
\end{enumerate}
Then $(\cF,\Phi_0, \Delta,\FT,\T)$ is a fusion datum.
\end{theorem}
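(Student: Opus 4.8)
The plan is to verify the four axioms of Definition \ref{fd-def} for $(\cF,\Phi_0,\Delta,\FT,\T)$ one family $\cF$ at a time; this is legitimate because the Fourier transform matrix of $\Uch(W)$ is block diagonal over families (property (P2)) and every axiom in Definition \ref{fd-def} couples only entries lying in a common block. The $1$-element families are trivial: if $\cF=\{\Phi_0\}$ then $\Phi_0\in\Irr(W)$ is special, so $\Eig(\Phi_0)=1$, and $\FT=\T=\Delta=(1)$, for which all axioms are vacuous. For the rest I would organize the argument by which matrix --- $\FT_\Gamma$, $\D_m$, $\FT_{S_2}$, or the $74\times 74$ matrix of \cite{Malle74} --- was attached to $\cF$ in Sections \ref{cry-ft-sect}, \ref{i2ft}, and \ref{h-ft-sect}.

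For the families of crystallographic type that are not exceptional, parametrized by $\sM(\Gamma)$ with $\Gamma=(\ZZ/2\ZZ)^n$ or $\Gamma=S_n$ and carrying $\FT=\FT_\Gamma$, Lemma \ref{fd-ex-lem} already furnishes a fusion datum $(\sM(\Gamma),(1,\One),\Delta_0,\FT_\Gamma,\T_0)$ with $\Delta_0:(x,\sigma)\mapsto(x,\overline\sigma)$ and $\T_0=\diag(\sigma(x)/\sigma(1))$; it then remains only to apply Observation \ref{eig-obs}(a), which identifies $\Phi_0$ with $(1,\One)$, identifies the restriction to $\cF$ of the involution $\Delta$ of Proposition \ref{cmplx-prop} with $\Delta_0$, and shows $\Eig(\Phi)=\sigma(x)/\sigma(1)$, i.e. $\T=\T_0$. (Equivalently, for crystallographic $(W,S)$ one can invoke \cite[Theorem 6.9]{GeckMalle}; the positivity axiom it omits is immediate, since $\{(1,\One),(y,\tau)\}=\tau(1)/|C_\Gamma(y)|>0$.)

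For the dihedral-type families carrying $\FT=\D_m$ --- the large family of $\Uch(W)$ in type $I_2(m)$, and, as one checks by matching the data in Section \ref{h-ft-sect} against Section \ref{i2-unipotent}, the non-exceptional $4$-element families in types $H_3$ and $H_4$, which carry $\FT=\D_5$ with the same indexing set $X$, involution $\Delta$, and diagonal $\T$ as the large family of $I_2(5)$ --- that $(\cF,\Phi_{(0,1)},\Delta,\D_m,\T)$ is a fusion datum is exactly Lusztig's construction of the exotic Fourier transform in \cite{L_exotic}; here the distinguished element is the special character $\Phi_{(0,1)}$ of Section \ref{i2-unipotent}, $\Delta$ is the involution $\Phi_{(i,j)}\mapsto\Phi_{(i,m-j)}$ recorded there, and $\T=\diag(\Eig(\Phi))$ with $\Eig(\Phi_{(i,j)})=\xi^{-ij}$, which matches Lusztig's normalization. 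The unique $74$-element family in type $H_4$ is handled the same way, citing \cite{Malle74}, where Malle's matrix $S$ is shown to form a fusion datum together with $\Delta$ and the stated Frobenius eigenvalues.

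The only families left are the exceptional $4$-element families of types $E_7,E_8,H_3,H_4$, indexed by $\sM(S_2)=\{(1,\One),(1,\sgn),(s,\One),(s,\sgn)\}$ with $\FT=\FT_{S_2}$ (Example \ref{FD-ex}), with $x_0=(1,\One)$, with $\Delta$ the transposition exchanging $(s,\One)\leftrightarrow(s,\sgn)$ --- forced by the fact that $\Delta$ preserves degrees and inverts Frobenius eigenvalues (Proposition \ref{cmplx-prop}) --- but with $\T=\diag(1,1,i,-i)$, which is \emph{not} the generic choice of Lemma \ref{fd-ex-lem}. (That this is the correct $\T$ follows from Observation \ref{eig-obs}(b) in the crystallographic cases and from the eigenvalue data of Section \ref{h-ft-sect} in types $H_3,H_4$.) The axioms that involve $\FT$ alone --- positivity of its $x_0$-column, $\FT^2=1$, and integrality --- are unchanged from Lemma \ref{fd-ex-lem}, and the commutability relations $\Delta\FT\Delta=\FT$, $\Delta\T\Delta=\T^{-1}$, together with $\T_{x_0,x_0}=1$, are immediate from the explicit shapes of $\Delta$ and $\T$ and the symmetry of $\FT_{S_2}$ under exchanging its last two indices. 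The one genuinely non-mechanical point, and the main obstacle, is the remaining half of the modularity axiom, $(\T\Delta\FT)^3=1$, with this modified $\T$: I would settle it by the explicit $4\times 4$ computation, in which $\T\Delta\FT=\tfrac12\left(\begin{smallmatrix}1&1&1&1\\1&1&-1&-1\\i&-i&-i&i\\-i&i&-i&i\end{smallmatrix}\right)$ and one verifies directly that its cube is the identity. Everything else is bookkeeping against Lemma \ref{fd-ex-lem}, Observation \ref{eig-obs}, Proposition \ref{cmplx-prop}, and the cited papers \cite{L,L_exotic,Malle74,GeckMalle}.
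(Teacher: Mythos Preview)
Your proof is correct and follows essentially the same case-by-case route as the paper: reduce to Lemma \ref{fd-ex-lem} via Observation \ref{eig-obs}(a) for the non-exceptional crystallographic families, cite \cite{L_exotic} for the dihedral-type families (including the non-exceptional $4$-element families in $H_3$, $H_4$) and \cite{Malle74} for the $74$-element family, and handle the exceptional $4$-element families by a direct $4\times 4$ check. The only difference is cosmetic: where the paper simply declares the exceptional case a ``straightforward computer calculation,'' you spell out the matrix $\T\Delta\FT$ and isolate $(\T\Delta\FT)^3=1$ as the one axiom that does not reduce to Lemma \ref{fd-ex-lem}, which is a nice touch.
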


\begin{remark}

For almost all families in $\Uch(W)$, our assignment of Fourier transform matrix follows exactly   the convention established in  the computer algebra system CHEVIE \cite{CHEVIE}.  For the six exceptional families in types $E_7$, $E_8$, $H_3$, and $H_4$, however, our assigned matrix  differs slightly from the one stored in CHEVIE$-$although, these matrices are the same after a permutation of rows and/or columns.  
In justification of our assignments, we can say the following: if one assumes $\Uch(W)$ given, then for each exceptional family, there is a \emph{unique} matrix $\FT$ satisfying both Theorem \ref{p1} and Theorem \ref{p4}. So at least in this sense our choice of $\FT$ is canonical.

\end{remark}

We include a brief proof of the  theorem for completeness.

\begin{proof}
In the case that $(W,S)$ is crystallographic and $\cF$ is a non-exceptional family of $\Uch(W)$, the theorem follows by combining Observation \ref{eig-obs} 
with Lemma \ref{fd-ex-lem}. If $\cF$ is one of the six exceptional families in types $E_7$, $E_8$, $H_3$, or $H_4$, then since $\cF$ is $\Delta$-invariant, the theorem 
is equivalent to the claim that $(X,x_0,\Delta,\FT,\T)$ is a fusion datum for
$X = \{1,2,3,4\}$ and $x_0 = 1$ and
\[ 
\Delta =\(\barr{rrrr} 1 & 0 & 0 &0 \\ 0 & 1 & 0 & 0 \\ 0 & 0 & 0 & 1 \\ 0 & 0 & 1 & 0 \earr\),
\qquad
\FT  = \frac{1}{2}\(\barr{rrrr} 1 & 1 & 1 &1 \\ 1 & 1 & -1 & -1 \\ 1 & -1 & 1 & -1 \\ 1 & -1 & -1 & 1 \earr\),
\qquad 
\T = \diag\(1,1,i,-i\).\] The proof of this is a straightforward computer calculation.
All families not covered by these cases occur when $(W,S)$ is non-crystallographic, and are either singletons; non-exceptional 4-element families in type $H_3$ or $H_4$; the nontrivial family in type $I_2(m)$; or the  74-element family in type $H_4$.
In the first case the theorem holds trivially; in the second and third case, the tuple $(\cF,\Phi_0, \Delta,\FT,\T)$ coincides with dihedral fusion datum given in \cite[\S3]{L_exotic}; and in the last case, $(\cF,\Phi_0, \Delta,\FT,\T)$ is by construction the fusion datum which  Malle describes in \cite{Malle74}.
\end{proof}

We conclude this section  
finally
by 
proving our main theorem 
from the introduction.

\begin{proof}[Proof of Theorem \ref{main-thm}]
If $(W,S)$ is classical, then the function $\fsd : \Uch(W)\to\RR$ which is identically 1 satisfies (1)-(3) in Theorem \ref{main-thm} and also (\ref{all-note}) by \cite[Theorem 1]{Kottwitz}.  The uniqueness of this function $\fsd$ follows by Theorem \ref{p4} as a consequence of the positivity axiom of a fusion datum, since any  function $\fsd' : \Uch(W)\to\RR$ satisfying (1) and (2) must have $\fsd'(\Phi) \leq \fsd(\Phi)$ for all $\Phi \in \Uch(W)$.
The final statement in the theorem holds because the Frobenius eigenvalues of $\Uch(W)$ all real if $(W,S)$ is classical by
 Observation \ref{eig-obs} (noting that the group $\Gamma$ in Observation \ref{eig-obs} is $(\ZZ/2\ZZ)^k$ in this case). 

Assume $(W,S)$ is of type $I_2(m)$, let $\FT$ denote the Fourier transform matrix of $\Uch(W)$, and let $\fsd :\Uch(W)\to \RR$ be the function with $\fsd (\Phi) = 0$ if $\Phi = \Phi_{(i,j)}$ for some $0<i<j<i+j < m$ such that $j\neq \frac{m}{2}$ and with $\fsd(\Phi) = 1$ for all other $\Phi \in \Uch(W)$.
From the discussion in Section \ref{i2-unipotent}, this function satisfies conditions (1) and (2) in the theorem. To show that it also satisfies (3) and (\ref{all-note}), it suffices by Proposition \ref{i2-prop} to check that if $\xi = \exp\( \frac{2\pi \sqrt{-1}}{m}\)$ and $0<i<j<i+j<m$, then when $m$ is odd, we have
\[ \FT \fsd\(\Phi_{(i,j)}\) \omdef= \sum_{0<k < \frac{m}{2}} \{ (i,j),(0,k) \} = \sum_{0<k < \frac{m}{2}} \tfrac{1}{m} \(\xi^{-ik} + \xi^{ik} -\xi^{jk} - \xi^{-jk}\) =\begin{cases} 1,&\text{if }i=0,\\0,&\text{otherwise},
\end{cases}\]
and when $m$ is even, we have
\[\ba \FT \fsd\(\Phi_{(i,j)}\) &\omdef=  \sum_{0<k < \frac{m}{2}} \Bigl(\{ (i,j),(0,k) \} + \{ (i,j),(k,\tfrac{m}{2})\}\Bigr)  
  + \{ (i,j),(0,\tfrac{m}{2})'\}  + \{ (i,j),(0,\tfrac{m}{2})''\} 
\\&=  \sum_{0<k < \frac{m}{2}} \(\tfrac{1-(-1)^j}{m} \(\xi^{-ik} + \xi^{ik}\) -   \tfrac{1-(-1)^i}{m} \(\xi^{jk} + \xi^{-jk}\)\) + 2\cdot\tfrac{(-1)^i-(-1)^j}{m}
\\&=\begin{cases} 2,&\text{if $i=0$ and $j$ is odd},\\0,&\text{otherwise},
\end{cases}
\ea\] and also, for  $x \in \{ (0,\tfrac{m}{2})', (0,\tfrac{m}{2})''\}$,
\[ \ba  \FT \fsd\(\Phi_{x}\) &\omdef= \sum_{0<k<\frac{m}{2}} \Bigl(\{ x,(0,k) \}  + \{ x,(k,\tfrac{m}{2}) \} \Bigr)+ \{ x,(0,\tfrac{m}{2})'\}  + \{ x,(0,\tfrac{m}{2})''\} 
\\&=
\tfrac{1-(-1)^{m/2}}{2} 
=
\begin{cases} 1,&\text{if $m\equiv 2\modu 4)$},\\0,&\text{if $m\equiv0\modu 4)$}.\end{cases}
\ea\]
 (Note that  we  need to check that 
 $\FT\fsd(\One) = \FT\fsd(\sgn) = 1$ as well, but this is obvious.)
Proving each of these three identities is straightforward.  Thus the funciton $\fsd$ satisfies conditions (1)-(3) in the theorem, and we deduce that it is the only such function by the positivity axiom of a fusion datum, exactly as in the classical case.


Suppose finally that $(W,S)$ has one of the remaining exceptional types. 
If $(W,S)$ is not of type $H_4$, then one can compute directly using the tables in \cite{Casselman} and Proposition \ref{h3-tbl} that
 the function $\fsd : \Uch(W)\to\RR$ which is 1 or 0 according to whether $\Eig(\Phi)$ is real or non-real satisfies (1)-(3) as well as (\ref{all-note}).  That $\fsd$ is the unique  function satisfying (1)-(3) then follows as in the previous cases 
by Theorem \ref{p4} and the positivity axiom of a fusion datum.
Similarly, if $(W,S)$ has type $H_4$, then one can calculate using Proposition \ref{h4-tbl} that (1)-(3) and (\ref{all-note}) hold for the function $\fsd :\Uch(W)\to\RR$ which is 0 on all $\Phi \in \Uch(W)$ with $\Eig(\Phi) \notin \RR$, $-1$ on the two elements of $\Uch(W)$ whose degrees have the form $\frac{1}{60} x^6 + \text{higher powers of $x$}$, and 1 on all other elements of $\Uch(W)$. Here, the uniqueness of $\fsd$ does not follow immediately from Theorem \ref{p4} because the values of $\fsd$ are not all positive, but it can still be easily checked.  In this case, the positivity axiom of a fusion datum implies that nearly all of the functions $\fsd' :\Uch(W) \to \RR$ satisfying (1) and (2) must have $\FT \fsd'(\Phi) < \FT\fsd(\Phi)$ for one of the special characters $\Phi\in \Irr(W)$. The remaining list of functions on $\Uch(W)$ which could possibly satisfy (1)-(3) is quite small,
and a short calculation confirms that the given function $\fsd$ is indeed  the only one with these properties.
%
%
%
\end{proof}

\section{Left cells and Kottwitz's conjecture}\label{lcells}
\def\Des{\mathrm{Des}_L}

In this final section we investigate how the preceding material connects to the left cells of a Coxeter group, and prove some partial results related to Conjecture \ref{kottwitz-conj} in the introduction.

Let $(W,S)$ be a finite Coxeter system with length function $\ell : W \to \NN$. We denote the left descent set of an element $w \in W$ by  $\Des(w) \omdef= \{ s\in S : \ell(sw) < \ell(w)\}$, and for each pair of elements $y,w \in W$, we write \[P_{y,w}(x) \in \NN[x]\] for   the associated \emph{Kazhdan-Lusztig polynomial}, as defined in  \cite[Chapter 5]{Bjorner} or \cite[\S12.5]{C} or \cite[\S7.9]{Humphreys}, among other places. Recall that $P_{w,w}(x) = 1$ and that $P_{y,w}(x) = 0$ unless $y \leq w$, where $\leq $ denotes the Bruhat partial order on $W$. 
From  \cite{KL}, we have the following sequence of definitions for $y,w \in W$:
\begin{enumerate}
\item[$\bullet$] Write $y \prec w$ if $y < w$ and $P_{y,w}(x)$ has degree $\frac{1}{2} \( \ell(w) - \ell(y) - 1\)$.
\item[$\bullet$] Write $y \leq_L w$ if there exist elements $x_0,x_1,\dots,x_k \in W$ such that $y = x_0$ and $w = x_k$ and for each $i \in [k]$, these two conditions hold: (1) either $x_{i-1} \prec x_i$ or $x_i \prec x_{i-1}$, and  (2) the descent set $\Des(x_{i-1}) $ is not contained in $\Des(x_i)$.


\item[$\bullet$] Write $y \sim_L w$ if $y \leq_L w$ and $w \leq_L y$.
\end{enumerate}
The \emph{left cells} of $W$ are  the equivalence classes of the relation $\sim_L$.
Let $V_\Gamma = \QQ\spanning \{ c_w : w \in \Gamma\}$ be a vector space with a basis indexed by a left cell $\Gamma$ in $W$, and define a map $\varrho_\Gamma : S \to \GL(V_\Gamma)$  by linearly extending  the formula
\[ \varrho_\Gamma(s) c_w = \begin{cases} -c_w, &\text{if }s \in \Des(w), \\
 \ds c_w +  \sum_{\substack{y \in \Gamma \\ s \in \Des(y)}} \mu(y,w) c_y,&\text{if }s \notin \Des(w),\end{cases}\] for $s \in S$ and $w \in \Gamma$,
 where $\mu(y,w)$ denotes the coefficient of $x^{\frac{1}{2}\( \ell(w)-\ell(y) -1\)}$ in $P_{y,w}(x)$ (which is zero unless $y \prec w$). This  extends to
a representation of $W$, called  the \emph{left cell representation} of $\Gamma$,  whose character we denote by
$\chi_\Gamma$.  

The following properties of  left cells and left cell representations are useful to recall (see \cite[Chapter 6]{Bjorner}). First, the left cell representations decompose the regular representation of $W$ and so the sum over left cells $\sum_\Gamma \chi_\Gamma$ is equal to $ \sum_{\psi \in \Irr(W)} \psi(1) \psi$.
Second, the singleton set $\{ 1\}$ is always a left cell, and its 
character is the trivial character $\One$. Finally, if $w_0$ denotes the longest element of $W$ and $\Gamma$ is a left cell, then $w_0\Gamma$ and $\Gamma w_0$ are left cells and $\chi_{w_0\Gamma} = \chi_{\Gamma w_0} = \chi_\Gamma \cdot \sgn$. 

We now state a few less well-known facts about the characters $\chi_\Gamma$.

\begin{theorem} If $\Gamma$, $\Gamma'$ are left cells in a finite Coxeter group $W$, then $\langle \chi_\Gamma,\chi_{\Gamma'} \rangle = |\Gamma \cap \Gamma'^{-1}|$.
\end{theorem}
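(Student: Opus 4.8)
The plan is to reduce the identity to the case where $\Gamma$ and $\Gamma'$ lie in a single two-sided cell $c$, and then to read off both sides from Lusztig's structure theory of Kazhdan--Lusztig cells. First I would dispose of the case in which $\Gamma$ and $\Gamma'$ lie in different two-sided cells. The map $w \mapsto w^{-1}$ carries left cells to right cells and preserves two-sided cells, so $\Gamma'^{-1}$ lies in the two-sided cell of $\Gamma'$; if this differs from the two-sided cell of $\Gamma$, then $\Gamma \cap \Gamma'^{-1} = \varnothing$. On the other hand, the set of irreducible constituents of a left cell module $V_\Gamma$ depends only on the two-sided cell of $\Gamma$, and distinct two-sided cells yield disjoint such sets of irreducible characters; hence in this case $V_\Gamma$ and $V_{\Gamma'}$ share no constituent and $\langle \chi_\Gamma,\chi_{\Gamma'}\rangle = 0$ as well. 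Thus we may assume $\Gamma,\Gamma' \subseteq c$ for a single two-sided cell $c$. Since $W$ is a real reflection group, every $\psi \in \Irr(W)$ is real-valued and self-dual, so writing $m_{\psi,\Gamma} = \langle \psi,\chi_\Gamma\rangle$ we have $\langle \chi_\Gamma,\chi_{\Gamma'}\rangle = \sum_\psi m_{\psi,\Gamma}\,m_{\psi,\Gamma'}$, the sum running over the irreducible characters attached to $c$.

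Next I would rewrite the right-hand side. One has $|\Gamma \cap \Gamma'^{-1}| = \#\{w \in W : w \in \Gamma \text{ and } w^{-1} \in \Gamma'\}$, and since $w \mapsto w^{-1}$ interchanges the left cell of $w$ with the right cell of $w^{-1}$, this equals $|\Gamma \cap \Delta|$ where $\Delta := \Gamma'^{-1}$ is a right cell contained in $c$; moreover the right cell module of $\Delta$ is obtained from $V_{\Gamma'}$ by transporting the $W$-action through $w \mapsto w^{-1}$, so its irreducible multiplicities are again the numbers $m_{\psi,\Gamma'}$. The identity to prove therefore becomes $|\Gamma \cap \Delta| = \sum_\psi m_{\psi,\Gamma}\,m^{\mathrm r}_{\psi,\Delta}$, with $\Gamma$ a left cell and $\Delta$ a right cell of $c$ and $m^{\mathrm r}_{\psi,\Delta}$ the multiplicity of $\psi$ in the right cell module of $\Delta$.

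The mechanism for this last formula is Lusztig's. One examines the subquotient $\mathrm{gr}_c(\QQ W)$ of the regular $(W,W)$-bimodule cut out by the two-sided Kazhdan--Lusztig preorder: the image $\bar C_w$ of the Kazhdan--Lusztig basis element $C_w$, for $w \in c$, lies in the $V_\Gamma$-layer of the left-cell filtration exactly when the left cell of $w$ is $\Gamma$, and in the $V^{\mathrm r}_\Delta$-layer of the right-cell filtration exactly when the right cell of $w$ is $\Delta$; hence $|\Gamma \cap \Delta|$ counts the $\bar C_w$ lying in both layers. Lusztig's description of the asymptotic ring $J_c$ identifies $\mathrm{gr}_c(\QQ W)$, as a $W\times W$-module, with $\bigoplus_\psi \psi \boxtimes \psi$ (using self-duality of the $\psi$); on each summand $\psi \boxtimes \psi$ the left- and right-cell filtrations induce filtrations of the two tensor factors whose $\Gamma$- and $\Delta$-graded pieces have dimensions $m_{\psi,\Gamma}$ and $m^{\mathrm r}_{\psi,\Delta}$, and the content of the theory is that these two filtrations are transverse. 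Counting dimensions of the $(\Gamma,\Delta)$-bigraded pieces then gives $|\Gamma \cap \Delta| = \sum_\psi m_{\psi,\Gamma}\,m^{\mathrm r}_{\psi,\Delta}$, and combining this with the reductions above yields $|\Gamma \cap \Gamma'^{-1}| = \langle \chi_\Gamma,\chi_{\Gamma'}\rangle$.

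The hard part is precisely the step of the previous paragraph: identifying $\mathrm{gr}_c(\QQ W)$ as a bimodule and establishing the transversality of the left- and right-cell filtrations on it. This is the full strength of Lusztig's cell theory, resting on his properties P1--P15 and on the Morita-type description of $J_c$ in terms of the finite group attached to $c$; for Weyl groups it is contained in Lusztig's work \cite{L}, and P1--P15 are known for all finite Coxeter systems (the cases $H_3$ and $H_4$ having been checked by explicit computation and $I_2(m)$ being elementary), so the statement holds uniformly. Alternatively, in the non-crystallographic types one may avoid the general theory and simply verify the identity against the explicit descriptions of the left cells of $H_3$, $H_4$, and $I_2(m)$ given later in this section.
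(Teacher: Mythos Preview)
Your proposal is correct and aligns with the paper's approach: the paper simply cites Lusztig \cite[Proposition 12.15]{L} for Weyl groups and Alvis \cite[Proposition 3.4]{Alvis} for the extension to $H_4$, noting that the same argument covers $H_3$ and $I_2(m)$, which is precisely the structure of your plan (Lusztig's cell theory for the crystallographic case, with the non-crystallographic types handled either via P1--P15 or by the explicit cell descriptions). Your unpacking of the mechanism through the bimodule $\mathrm{gr}_c(\QQ W)$ and the asymptotic ring $J_c$ is a somewhat later, more conceptual reformulation than Lusztig's original 1984 argument (which works directly with the $a$-function and positivity), but the content is the same and the references you invoke are exactly those the paper uses.
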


\begin{proof}
When $W$ is a Weyl group, this is \cite[Proposition 12.15]{L}. Alvis notes how to extend Lusztig's proof to type $H_4$ \cite[Proposition 3.4]{Alvis}, and his argument remains valid in types $H_3$ and $I_2(m)$ (noting the explicit description of the left cells for these groups given in the following sections.) Alternatively, Geck has given a general proof of this theorem; see \cite[Corollary 3.9]{Geck_recent}.
\end{proof}

%

\begin{corollary}\label{lcell-cor} If $\Gamma$ is a left cell in a finite Coxeter group $W$, then $\chi_\Gamma$ is multiplicity-free if and only if every $w \in \Gamma \cap \Gamma^{-1}$ is an involution.
\end{corollary}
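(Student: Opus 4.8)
The plan is to derive this from the theorem just proved together with the known count of involutions in a left cell, the bridge between them being an elementary character-theoretic observation.

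First I would record the following standard fact. Write $\chi_\Gamma = \sum_{\psi \in \Irr(W)} m_\psi\, \psi$ with each $m_\psi \in \NN$. By orthogonality of irreducible characters, $\langle \chi_\Gamma, \chi_\Gamma\rangle = \sum_\psi m_\psi^2$ while $\sum_\psi \langle \psi, \chi_\Gamma\rangle = \sum_\psi m_\psi$. Since $m_\psi^2 \geq m_\psi$ for every nonnegative integer $m_\psi$, we always have $\langle \chi_\Gamma, \chi_\Gamma\rangle \geq \sum_\psi \langle \psi, \chi_\Gamma\rangle$, with equality if and only if $m_\psi \in \{0,1\}$ for all $\psi$, i.e.\ if and only if $\chi_\Gamma$ is multiplicity-free. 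Hence it suffices to show that $\langle \chi_\Gamma, \chi_\Gamma\rangle = \sum_\psi \langle \psi, \chi_\Gamma\rangle$ if and only if every element of $\Gamma \cap \Gamma^{-1}$ is an involution.

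Next I would identify both sides combinatorially. The left-hand side equals $|\Gamma \cap \Gamma^{-1}|$ by the theorem immediately preceding this corollary. The right-hand side $\sum_\psi \langle \psi, \chi_\Gamma\rangle$ equals the number of involutions in $\Gamma$, exactly as used in the proof of the weakened form of Kottwitz's conjecture in the introduction (namely \cite[Theorem 1.1]{Geck_recent}). Now an involution is its own inverse, so the set of involutions of $W$ lying in $\Gamma$ coincides with the set of those lying in $\Gamma^{-1}$, hence with the set of those lying in $\Gamma \cap \Gamma^{-1}$. Thus the equivalence to be proved reduces to: $|\Gamma \cap \Gamma^{-1}|$ equals the number of involutions it contains if and only if every element of $\Gamma \cap \Gamma^{-1}$ is an involution.

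Finally, this last assertion is immediate: the map $w \mapsto w^{-1}$ is a permutation of the finite set $\Gamma \cap \Gamma^{-1}$ whose fixed points are precisely the involutions it contains, so $|\Gamma \cap \Gamma^{-1}|$ exceeds the number of those involutions by exactly twice the number of two-element orbits $\{w, w^{-1}\}$ with $w^2 \neq 1$, and this excess vanishes precisely when no such orbit exists. Chaining the three steps proves the corollary. I do not expect a genuine obstacle here, since all the mathematical content is carried by the preceding theorem and by Geck's involution count; the only thing to be careful about is bookkeeping, in particular reading ``involution'' so as to include the identity (consistent with the index set of $\Invol{W}$ in Definition \ref{intro-def}) and stating the equality case of $\sum_\psi m_\psi^2 \geq \sum_\psi m_\psi$ correctly for nonnegative integers.
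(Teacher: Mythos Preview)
Your proof is correct and follows essentially the same approach as the paper: both use the equivalence between multiplicity-freeness and equality in $\langle \chi_\Gamma,\chi_\Gamma\rangle \geq \sum_\psi \langle \chi_\Gamma,\psi\rangle$, identify the left side as $|\Gamma\cap\Gamma^{-1}|$ via the preceding theorem, and the right side as the number of involutions in $\Gamma$ via Geck's result. You have simply spelled out the final step (that involutions in $\Gamma$ are exactly the involutions in $\Gamma\cap\Gamma^{-1}$, and that a finite set has cardinality equal to its involution count iff it consists only of involutions) which the paper leaves implicit.
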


\begin{proof}
The character $\chi_\Gamma$ is multiplicity-free if and only if the inequality $\langle \chi_\Gamma,\chi_\Gamma\rangle \geq  \sum_{\psi \in \Irr(W)} \langle \chi_\Gamma,\psi\rangle$ is an equality. The left side is $|\Gamma \cap \Gamma^{-1}|$ by the previous theorem, while the right side is the number of involutions in $\Gamma$ by \cite[Theorem 1.1]{Geck_recent}.
\end{proof}

The next theorem shows the Fourier transform matrix of $\Uch(W)$ indeed satisfies property (P3) in the introduction. 

\begin{theorem}\label{ft-cells-thm} Let $(W,S)$ be a finite, irreducible Coxeter system with 
 Fourier transform matrix $\FT$, as  in Section \ref{FT-sect}. 
Fix a left cell $\Gamma$  of $W$, and let $v$ be the vector indexed by $\Uch(W)$ whose entries are the irreducible multiplicities of $\chi_\Gamma$, extended by zeros on $\Uch(W)\setminus \Irr(W)$. Then $\FT v = v$.
\end{theorem}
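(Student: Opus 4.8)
The plan is to begin by reducing the statement to a single family. Property (P2) says that $\FT$ is block diagonal with respect to the partition of $\Uch(W)$ into families, so it suffices to show, for each family $\cF$ with attached Fourier matrix $\FT_\cF$ (as in Sections~\ref{cry-ft-sect}, \ref{i2ft}, \ref{h-ft-sect}), that $\FT_\cF v_\cF = v_\cF$, where $v_\cF$ is the restriction of $v$ to $\cF$: its $\Phi$-entry is $\langle \chi_\Gamma,\Phi\rangle$ for $\Phi \in \cF\cap\Irr(W)$ and $0$ for $\Phi \in \cF\setminus\Irr(W)$. A left cell $\Gamma$ lies in a single two-sided cell, which lies in a single family, so all of the $v_\cF$ vanish except one; and for the singleton families $\FT_\cF$ is the $1\times 1$ identity matrix. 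So only families of size $>1$ need attention, and the argument splits according to whether $(W,S)$ is crystallographic.

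For $(W,S)$ crystallographic I would appeal to Lusztig's theory of constructible representations \cite{L}. Each family $\cF$ is parametrized by a set $\sM(G)$ for a finite group $G$; the left cell representations of $W$ are exactly its constructible representations; and Lusztig's description of the latter shows that the restriction of a constructible representation to $\cF$ corresponds, under the parametrization $\cF\leftrightarrow\sM(G)$, to a vector of the form $(y,\tau)\mapsto \tau(1)\cdot[\,y\text{ conjugate to }x\,]$ for some conjugacy class of $x$ in $G$ (the left cells inside the associated two-sided cell being indexed by these classes). It then remains to check the elementary fact that every such vector is fixed by $\FT_G=(\{m,m'\})_{m,m'\in\sM(G)}$. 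This is a short computation from \eqref{ftgamma}: the matrix entry $\{(z,\sigma),(1,\tau)\}$ collapses to $\sigma(1)\tau(z^{-1})/|C_G(z)|$, so when $x=1$ the $(z,\sigma)$-component of the image is $\frac{\sigma(1)}{|C_G(z)|}\sum_{\tau\in\Irr(G)}\tau(1)\tau(z^{-1}) = \sigma(1)\,\delta_{z,1}$ by column orthogonality of the character table of $G$, and the general conjugacy class reduces to $x=1$ by the conjugation argument already used in the proof of Lemma~\ref{fd-ex-lem}. I would record this as a short lemma.

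For $(W,S)$ non-crystallographic I would argue by direct inspection, using the explicit left cells described later in this section together with the explicit Fourier matrices. In type $I_2(m)$ the only family of size $>1$ is $\cF = \Uch(W)\setminus\{\One,\sgn\}$, with matrix $\D_m$; its left cell representations are $\One$, $\sgn$, and the common representation carried by the two ``big'' left cells, whose multiplicity vector on $\cF$ is supported on the characters $\Phi_{(0,k)}$ (together with the two extra linear characters when $m$ is even), so the identity $\D_m v_\cF = v_\cF$ is literally the computation already performed in the proof of Theorem~\ref{main-thm} (for $m$ even one repeats that computation, retaining the terms $\{(i,j),(k,\tfrac m2)\}$, $\{(i,j),(0,\tfrac m2)'\}$, $\{(i,j),(0,\tfrac m2)''\}$ exactly as there). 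For the $4$-element families in types $H_3$ and $H_4$, whose Fourier matrix is $\D_5$ or $\FT_{S_2}$ and which meet $\Irr(W)$ in exactly two characters, placed first in the orderings of Section~\ref{h-ft-sect}, the vector $v_\cF$ automatically has the shape $(a,b,0,0)$; since $(a,b,0,0)$ is fixed by $\D_5$, resp.\ by $\FT_{S_2}$, if and only if $a=b$, it remains only to decompose each left cell representation into irreducibles — a routine {\sc Magma} computation in the spirit of Propositions~\ref{h3-tbl} and \ref{h4-tbl} — and confirm that within every such family it contains its two irreducible members with equal multiplicity. Finally, for the unique $74$-element family of type $H_4$, one performs the analogous check against Malle's matrix, retrieved from CHEVIE as in Section~\ref{h-ft-sect}, and the (computer-generated) left cells of $W$.

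The one genuinely deep input is Lusztig's identification, in the crystallographic case, of the constructible representations with the $\FT_G$-fixed vectors inside a family; once quoted, the crystallographic case is immediate. Everything else is bookkeeping of varying weight: the orthogonality computation of the second paragraph, the dihedral identity inherited from the proof of Theorem~\ref{main-thm}, and the finite verifications in $H_3$ and $H_4$. The heaviest of the latter is the $74$-dimensional family in $H_4$, for which I would rely on a computer calculation; and the exceptional $4$-element families need a little care, since our choice of $\FT_{S_2}$ differs by a permutation from the matrix stored in CHEVIE, so the crucial equality $a=b$ must be confirmed directly rather than read off from the literature.
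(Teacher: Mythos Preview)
Your overall strategy matches the paper's: for crystallographic $(W,S)$ cite Lusztig~\cite{L}, and for the non-crystallographic types verify directly. The paper simply cites \cite[Theorem 12.2]{L} for the crystallographic case and reduces the $I_2(m)$ case to explicit identities nearly identical to the ones you indicate, then checks $H_3$ and $H_4$ by computer. So at the level of outline the two proofs agree.

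However, your attempt to flesh out the crystallographic case contains a genuine error. Your claimed description of the left-cell multiplicity vector --- that it has the form $c_x : (y,\tau)\mapsto \tau(1)\,[\,y\sim x\,]$ for some conjugacy class $[x]$ in $G$ --- cannot be right, because those vectors are not fixed by $\FT_G$ unless $x=1$. A short computation from \eqref{ftgamma} gives $(\FT_G c_x)_{(z,\sigma)} = \sigma(x)\,[\,z=1\,]$; for example, with $G=S_2$ and $x=s$ one has $\FT_{S_2}(0,0,1,1)^T=(1,-1,0,0)^T\ne(0,0,1,1)^T$. So the ``reduction to $x=1$ by the conjugation argument of Lemma~\ref{fd-ex-lem}'' does not salvage anything: the vectors $c_x$ for $x\ne 1$ are simply not $\FT_G$-fixed. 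Lusztig's actual description of the constructible (equivalently, left-cell) multiplicities inside a family is more subtle than this, and his Theorem~12.2 is what carries the weight; your ``short lemma'' would not recover it. If you want to keep the detailed argument, you must correctly identify which vectors in $\RR^{\sM(G)}$ arise from constructible representations, and that is exactly the content of the cited theorem.

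Two smaller points on the non-crystallographic side. For $I_2(m)$ with $m$ even, the two large left cells $Y$ and $Y^*$ carry \emph{different} characters (they differ in whether $\phi'_{1,m/2}$ or $\phi''_{1,m/2}$ appears), so there are two vectors to verify, not one ``common'' vector; and the resulting identities are close to, but not literally, those in the proof of Theorem~\ref{main-thm} (compare the identities displayed in the paper's proof of the present theorem with those in the proof of Theorem~\ref{main-thm}: the $m$ odd case is identical, the $m$ even case is not). Your treatment of the $4$-element families in $H_3$ and $H_4$, and of the $74$-element family by computer, is fine and agrees with the paper.
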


We  prove this here, even though to do so we must appeal to a few results not yet given.

\begin{proof}
When $W$ is a Weyl group, 
 this is  \cite[Theorem 12.2]{L}.
In types $H_3$ and $H_4$, the theorem follows from a  computer calculation using the description of the characters $\chi_\Gamma$ given in Sections \ref{h3lcells-sect} and \ref{h4lcells-sect} below. In type $I_2(m)$ it follows by a short argument using the content of Sections \ref{i2ft} and \ref{i2cells}
 that   the theorem  is equivalent to   the following identities: if $\xi = \exp\(\frac{2\pi \sqrt{-1}}{m}\)$ and $0<i<j<i+j < m$, then when $m$ is odd
\[ 
\sum_{0<k<\frac{m}{2}} \tfrac{1}{m} \( \xi^{-ik} + \xi^{ik} - \xi^{jk} - \xi^{-jk}\) = \begin{cases} 1,&\text{if $i=0$}, \\ 0,&\text{otherwise,}\end{cases}
 \]
 and when $m$ is even
\[\begin{cases} \ds \tfrac{(-1)^i -(-1)^j}{m} + \sum_{0<k<\frac{m}{2}} \tfrac{1}{m} \( \xi^{-ik} + \xi^{ik} - \xi^{jk} - \xi^{-jk}\) = \begin{cases} 1,&\text{if $i=0$}, \\ 0,&\text{otherwise,}\end{cases} 
\\[-10pt]\\
\ds\tfrac{1-(-1)^{m/2}}{2m} +  \sum_{0<k<\frac{m}{2}} \tfrac{1-(-1)^k}{m}  = \frac{1}{2}.
\end{cases}
\]
   Proving each of these is straightforward arithmetic.
\end{proof}

The last object of this section is to prove the following theorem. We will accomplish this in a case-by-case fashion, by examining the left cell representations in each non-crystallographic type.

\begin{theorem}\label{lcells-conj-thm} Conjecture \ref{kottwitz-conj} holds if $(W,S)$ has type $H_3$, $H_4$, or $I_2(m)$.
\end{theorem}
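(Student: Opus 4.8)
The plan is to verify Conjecture \ref{kottwitz-conj} separately in types $H_3$, $H_4$, and $I_2(m)$, relying in each case on the explicit description of the left cells and the left cell characters of $W$ that occupies the remainder of this section (Sections \ref{h3lcells-sect}, \ref{h4lcells-sect}, and \ref{i2cells} below). In every case the strategy is the same: compute the left-hand side $\langle \chi_{W,\sigma},\chi_\Gamma\rangle$ from a known irreducible decomposition of $\chi_{W,\sigma}$, compute the right-hand side $|\Sigma\cap\Gamma|$ by direct enumeration of the elements of $\Gamma$, and check that the two agree for each pair $(\sigma,\Gamma)$. As a consistency check, summing the claimed identity over the conjugacy classes of involutions $\Sigma$ should reproduce the identity $\langle \chi_W,\chi_\Gamma\rangle = |\{w\in\Gamma:w^2=1\}|$ proved just above, since $\chi_W = \sum_\sigma \chi_{W,\sigma}$ and $\sum_\Sigma |\Sigma\cap\Gamma|$ counts the involutions in $\Gamma$.

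For $I_2(m)$ the argument will be entirely by hand. First I would record the left cells of the dihedral group: besides the singletons $\{1\}$ and $\{w_0\}$, there are exactly two further cells, $\Gamma_r = \{w\in W : \ell(rw)<\ell(w)>\ell(sw)\}$ and $\Gamma_s = \{w\in W : \ell(sw)<\ell(w)>\ell(rw)\}$, each of size $m-1$; their Kazhdan--Lusztig combinatorics is elementary. One then finds $\chi_{\{1\}} = \One$, $\chi_{\{w_0\}} = \sgn$, and that $\chi_{\Gamma_r}$ and $\chi_{\Gamma_s}$ each equal $\sum_k \phi_{2,k}$ when $m$ is odd, and equal this sum together with one of the two linear characters $\phi'_{1,m/2}$, $\phi''_{1,m/2}$ when $m$ is even (these being interchanged by the diagram automorphism swapping $r$ and $s$). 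With these decompositions in hand, the left-hand side of the conjecture is read off from Table \ref{i2-tbl} and Proposition \ref{i2-prop}, while the right-hand side is obtained by placing the representative involutions $1$, $w_0$, $r$, $s$ in their cells using $(\ref{des-i2})$ and counting their $W$-conjugates within each cell. A short case analysis according to the residue of $m$ modulo $4$, mirroring the three blocks of Table \ref{i2-tbl}, then matches the two sides.

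For $H_3$ and $H_4$ I would appeal to the lists of left cells extracted from the Kazhdan--Lusztig polynomials, computed as elsewhere in this paper in {\sc{Magma}}. For each left cell $\Gamma$ I record the decomposition of $\chi_\Gamma$ into the irreducibles $\phi_{d,e}$ of Sections \ref{h3-sect} and \ref{h4-sect}; the inner products $\langle \chi_{W,\sigma},\chi_\Gamma\rangle$ then follow immediately from the decompositions of $\chi_{W,\sigma}$ in Propositions \ref{h3-tbl} and \ref{h4-tbl}. On the other side, for each conjugacy class of involutions $\Sigma$ and each cell $\Gamma$ I compute $|\Sigma\cap\Gamma|$ by brute enumeration of the elements of $\Gamma$. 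A direct comparison of the two resulting tables then proves the theorem in these types.

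The only genuine obstacle is the size of the computation in type $H_4$: there $|W| = 14400$, there are many left cells, and $\chi_{W,ac}$ even has constituents of multiplicity $2$ (Proposition \ref{h4-tbl}(5)), so the bookkeeping is substantial, though no conceptual difficulty arises. In type $I_2(m)$ the only subtlety is the dependence on the parity of $m$: deciding which of $\phi'_{1,m/2}$ and $\phi''_{1,m/2}$ appears in $\chi_{\Gamma_r}$ as opposed to $\chi_{\Gamma_s}$, and keeping track of which of $r$, $s$, $w_0$ are conjugate in $W$ (all four of $1$, $w_0$, $r$, $s$ lie in distinct classes when $m$ is even, while $r$, $s$, $w_0$ are all conjugate when $m$ is odd).
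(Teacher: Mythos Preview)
Your proposal is correct and follows essentially the same route as the paper: a case-by-case verification in which one lists the left cells and their characters (by hand for $I_2(m)$, by computer/citation for $H_3$ and $H_4$), tabulates $|\Sigma\cap\Gamma|$ for each involution class $\Sigma$ and each cell $\Gamma$, and compares with the decompositions of $\chi_{W,\sigma}$ from Propositions \ref{h3-tbl}, \ref{h4-tbl}, and Table \ref{i2-tbl}. The only cosmetic differences are that the paper cites Alvis's published classification of the $H_4$ cells rather than recomputing them, and characterizes the two nontrivial dihedral cells by their \emph{right} descent sets $R_{\{r\}}$, $R_{\{s\}}$ rather than by left descents.
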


From this and the results of \cite{Casselman,Kottwitz}, it follows that Kottwitz's conjecture  holds for all finite irreducible Coxeter groups except possibly those of type $BC_n$, $D_n$, $E_7$, and $E_8$. 
Recent work of Bonnaf\'e and Geck \cite{Geck1,Geck2} has established the conjecture in all of these remaining cases except $E_8$.

%

\subsection{Left cells of the Coxeter group of type $H_3$}\label{h3lcells-sect}

Let $(W,S)$ be the Coxeter system of type $H_3$. The group $W$ decomposes into 22 distinct left cells which we may describe  as follows. (This description does not seem to appear anywhere in the literature, though one can easily compute the left cells in type $H_3$ directly, using for example Fokko du Cloux's program {\tt{Coxeter}} \cite{Coxeter}, which is what was used to derive the following statements.)
Label the generators $S = \{a,b,c\}$ as in Section \ref{h3-sect} and let $w_0 = (abc)^5$ denote the longest element of $W$.
Following Alvis \cite{Alvis}, we define \be \label{r_j}
R_J = \{ w \in W : \Des(w) = J\},\quad\text{for each subset $J\subset S$},\ee and  let $X^* =  \{ w_0w : w \in X\}$
for any subset $X \subset W$. 
We now define 12 subsets $I_i, J_i, K_i \subset W$ as follows:
\[ \ba I_1 & = R_{\{b,c\}} \cap R_{\{a\}} aba,\\ 
I_2 &= I_1a, \\
 I_3 &= I_2b=I_2^*,\\ 
 I_4 &= I_3a = I_1^*,\\ {}
\ea
\qquad
\ba 
J_1 &= R_{\{a,c\}} \cap R_{\{c\}} cbab, \\ 
J_2 &= J_1 b, \\
J_3 &= J_2a, \\
J_4 & = J_3b, \\
J_5 &= J_4c, 
\ea
\qquad 
\ba K_1 &= R_{\{a\}} - (I_4 \cup J_3),
\\
K_2 &= R_{\{b\}} - (I_3 \cup J_1^* \cup J_2 \cup J_4),
\\
K_3 &= R_{\{c\}} - J_5. \\ {} \\ {}
\ea
\]
The reader should compute that $|I_i| = 8$ and $|J_i| = 5$ and $|K_i| = 6$.
In addition, let 
$L = \{ 1\}$ so that $L^* = \{ w_0\}$.
We now have this computational proposition (which is closely related to the calculations summarized in \cite[\S5]{L_Benson}):

\begin{proposition}\label{h3-left} 
The left cells of the Coxeter system $(W,S)$ of type $H_3$ are the 22 disjoint subsets $I_i$, $J_i$, $J_i^*$, $K_i$, $K_i^*$, $L$,  $L^*$, and the characters of the associated cell representations are respectively 
\[(\phi_{4,3} + \phi_{4,4});\quad \phi_{5,2};\quad \phi_{5,5};\quad (\phi_{3,1} + \phi_{3,3});\quad (\phi_{3,6} + \phi_{3,8});\quad \phi_{1,0};\quad  \phi_{1,15}.\]
\end{proposition}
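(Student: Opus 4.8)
The plan is to prove the proposition by an explicit computation with Kazhdan--Lusztig polynomials, organized so that a handful of general facts about left cells handle most of the bookkeeping.

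I would begin by recording three standard reductions. First, every left cell of $W$ lies in a single right-descent class $R_J$, since $y\sim_L w$ forces $\Des(y)=\Des(w)$; this immediately pins each of the putative cells $I_i$, $J_i$, $K_i$ inside the $R_J$ used to define it. Second, $\{1\}$ is a left cell with character $\One=\phi_{1,0}$, and both $\Gamma\mapsto w_0\Gamma$ and $\Gamma\mapsto\Gamma w_0$ are involutions on the set of left cells with $\chi_{w_0\Gamma}=\chi_{\Gamma w_0}=\sgn\otimes\chi_\Gamma$; hence $L^*=\{w_0\}$ is a cell with character $\phi_{1,15}$, and once the cells $I_i$, $J_i$, $K_i$, $L$ and their characters are known, the starred cells and their characters follow automatically. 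Third, the Kazhdan--Lusztig $*$-operations attached to the edges of the Coxeter diagram --- the ordinary involution for the bond-$3$ edge $\{b,c\}$ and Lusztig's generalized version for the bond-$5$ edge $\{a,b\}$, along with their right-handed analogues --- carry left cells to left cells and induce isomorphisms of the attached cell representations; these operations underlie the stated relations $I_2=I_1a$, $I_3=I_2b$, $J_2=J_1b$, $\dots$, so part of the work is to confirm that on each relevant set the $*$-operation really is the indicated right translation, i.e.\ that the elements involved sit at the appropriate endpoints of their $\{a,b\}$- or $\{b,c\}$-strings.

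The core of the argument is then the computation itself. For the group $W$ of order $120$ one determines the Bruhat order and the polynomials $P_{y,w}$, hence the relation $\prec$ and the coefficients $\mu(y,w)$; this is entirely routine and was carried out with du~Cloux's program {\tt Coxeter}. From this data the left preorder $\le_L$ and its equivalence classes are read off directly, and one checks that the $22$ sets listed are pairwise disjoint, exhaust $W$, and coincide with these classes --- the counting is immediate from $|I_i|=8$, $|J_i|=5$, $|K_i|=6$, $|L|=1$, giving $4\cdot8+2(5\cdot5)+2(3\cdot6)+2=120$. The cell characters are obtained by decomposing each representation $\varrho_\Gamma$ using the $\mu$-values; the $*$-operations reduce this to the four non-starred cases $I$, $J$, $K$, $L$, and $\chi_{\Gamma^*}=\sgn\otimes\chi_\Gamma$ settles the rest. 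Throughout there is a running consistency check from $\sum_\Gamma\chi_\Gamma=\sum_{\psi\in\Irr(W)}\psi(1)\psi$: for example $\phi_{4,3}$ and $\phi_{4,4}$ each occur in the regular representation with multiplicity $4$, which is exactly accounted for by the four cells $I_1,\dots,I_4$, and similarly for the remaining irreducible characters.

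I expect the main obstacle to be organizational rather than conceptual: there is no deep point, but one must generate and manage the Kazhdan--Lusztig polynomials of $H_3$ carefully and then match the combinatorially defined sets $I_i$, $J_i$, $K_i$ precisely against the resulting cells. The one genuinely non-mechanical ingredient is verifying that the stated right translations --- such as $I_1\mapsto I_1a$ --- are instances of cell-preserving $*$-operations, which reduces to a finite inspection of strings in the Coxeter diagram once the descent data is available; everything else is bookkeeping and elementary arithmetic.
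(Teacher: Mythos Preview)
Your proposal is correct and follows essentially the same approach as the paper: both rely on du~Cloux's program {\tt Coxeter} to compute the Kazhdan--Lusztig data and then verify that the output matches the stated description. The paper's own proof is in fact considerably terser than yours --- it simply asserts that {\tt Coxeter} produces the left cells and cell characters in a few seconds and that the result was checked against the proposition in {\sc Magma} --- so your additional organizational layer (descent-set constraints, the $w_0$-symmetry, and the $*$-operations explaining the right-translation relations among the $I_i$, $J_i$) is extra scaffolding rather than a different argument.
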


\begin{proof}
 {\tt{Coxeter}} \cite{Coxeter} outputs a description of the left cells  and the associated cell representations in type $H_3$ in a few seconds.
 We have checked that this information matches what is asserted in the proposition
using \sc{Magma} \cite{magma}.
\end{proof}


Table \ref{lcells-h3} lists (alongside some helpful auxiliary data) the sizes of the intersections of the left cells in $W$ with the group's four conjugacy classes of involutions, which we 
recall from Section \ref{h3-sect} are represented by the elements $1$, $a$, $ac$, and $(abc)^5$. 
 Each row of the table corresponds to a left cell, while each of the last four columns  corresponds to a conjugacy class.
Comparing Proposition \ref{h3-tbl} with Table \ref{lcells-h3} yields a proof of Conjecture \ref{kottwitz-conj} in type $H_3$ by inspection.

\begin{table}[b]
\[ \barr{l|l|l|llll}\hline
\text{Left cell}\quad & \text{Cell size}\quad & \text{Cell character}\quad & 
1 & (abc)^5 & a & ac
\\
\hline
I_i\ (1\leq i \leq 4) & 8 & \phi_{4,3} + \phi_{4,4} & 0 &0& 1  &1
\\
J_i\ (1\leq i \leq 5)  &5 &  \phi_{5,2} & 0 &0 &0 & 1
\\
J_i^*\ (1\leq i \leq 5)  &5 &  \phi_{5,5} & 0&0 & 1 & 0
\\
K_i\ (1\leq i \leq 3)  & 6 & \phi_{3,1} + \phi_{3,3} & 0&0 & 2 & 0
\\
K_i^*\ (1\leq i \leq 3)  &6 &  \phi_{3,6} + \phi_{3,8} & 0&0 & 0 & 2 
\\
L &1 &  \phi_{1,0}  & 1 & 0 & 0 &0
\\
L^* & 1 & \phi_{1,15}  & 0 & 1 & 0 & 0
\\
\hline\earr\] 
\caption{Sizes of intersections of left cells and  conjugacy classes of 
involutions   in type $H_3$
}
\label{lcells-h3}
\end{table}


\subsection{Left cells of the Coxeter group of type $H_4$}\label{h4lcells-sect}

The Coxeter group of type $H_4$ decomposes into a disjoint union of 206 left cells. Alvis classifies these in \cite{Alvis}, 
and assigns each of them one of the labels $A_i$, $B_i$, $B_i^*$, $C_i$, $C_i^*$, $D_i$, $D_i^*$, $E_i$, $E_i^*$, $F_i$, $F_i^*$, $G_1$, $G_1^*$.  We refer to \cite{Alvis} for the precise definition of these sets, noting the following correction:

\begin{remark}
On page 162 of the published version of 
Alvis's paper \cite{Alvis}, the left cell $A_{12}$ is defined by the equation \[\tag{INCORRECT} A_{12} = A_{10}d\] which  should  instead be \[\tag{CORRECT} A_{12} = A_{11}d.\] Apart from this quite minor (but inevitably confusing) detail,  everything else in Alvis's paper seems to be completely accurate.
\end{remark}


Table \ref{lcells-h4} lists  the sizes of the intersections of the left cells in $W$ with  the group's five conjugacy classes of involutions, which are represented as in Section \ref{h4-sect} by the elements $1$, $a$, $ac$, $(abc)^5$, and $(abcd)^{15}$. We have structured this table exactly like Table \ref{lcells-h3}, and comparing it to Proposition \ref{h4-tbl} similarly yields an immediate proof of 
Conjecture \ref{kottwitz-conj} in type $H_4$.

\begin{table}[b]
\[\barr{l|l|l|lllll}\hline
\text{Left cell}\quad & \text{Cell size}\quad & \text{Cell character}\quad & 
1 & (abcd)^{15} & a & (abc)^5 & ac
\\
\hline
A_i\ (1\leq i \leq 8) & 326 & \phi_{24,6} + \phi_{24,12} + \phi_{24,7}   & 0 &0& 0  &0 & 12
\\
&  &\ + \phi_{24,11}+ \phi_{8,12} + \phi_{8,13}  &
\\
 &  &\ + \phi_{18,10} + \phi_{30,10,12} + \phi_{30,10,14}  \\
 & & \ + \phi_{40,8} + 2 \phi_{48,9} 
 \\&&&&&&
\\[-8pt]
A_i\ (9\leq i \leq 18)  & 392 &  \phi_{24,6} + \phi_{24,12} + \phi_{24,7}  & 0 &0& 0  &0 & 14
\\
 &  &  \ + \phi_{24,11}  + \phi_{10,12} + \phi_{16,11}  \\
  &  &  \ + \phi_{16,13} + \phi_{18,10} + \phi_{30,10,12} \\
  & & \ + \phi_{30,10,14} + 2\phi_{40,8} + 2 \phi_{48,9} 
\\&&&&&&
\\[-8pt]
A_i\ (19\leq i \leq 24)  &436 & \phi_{24,6}  + \phi_{24,12} + \phi_{24,7} & 0 &0& 0  &0 & 16
\\
 & & \ + \phi_{24,11} + \phi_{6,12} + \phi_{6,20}  \\
 & & \ + \phi_{16,11} + \phi_{16,13}+ 2\phi_{30,10,12}  \\
 & & \ + 2\phi_{30,10,14} + 2\phi_{40,8}+2\phi_{48,9} 
\\&&&&&&
\\[-8pt]
B_i\ (1\leq i \leq 36) & 36 & \phi_{36,5} & 0 &0& 1  &0 & 0
\\
B_i^*\ (1\leq i \leq 36)  &36 & \phi_{36,15}  & 0 &0& 0  &1 & 0 
\\
C_i\ (1\leq i \leq 25) & 36 & \phi_{25,4} & 0 &0& 0  &0 & 1
\\
C_i^*\ (1\leq i \leq 25)  &36 & \phi_{25,16}  & 0 &0& 0  &0 & 1
\\
D_i\ (1\leq i \leq 16) & 32 & \phi_{16,3} + \phi_{16,6} & 0 &0& 1  &0 & 1
\\
D_i^*\ (1\leq i \leq 16)  &32 & \phi_{16,18} + \phi_{16,21}  & 0 &0& 0  &1 & 1
\\
E_i\ (1\leq i \leq 9) & 18 & \phi_{9,2} + \phi_{9,6} & 0 &0& 0  &0 & 2
\\
E_i^*\ (1\leq i \leq 9)  &18 & \phi_{9,22} + \phi_{9,26}  & 0 &0& 0  &0 & 2
\\
F_i\ (1\leq i \leq 4) & 8 & \phi_{4,1} + \phi_{4,7} & 0 &0& 2  &0 & 0
\\
F_i^*\ (1\leq i \leq 4)  & 8 & \phi_{4,31} + \phi_{4,37}  & 0 &0& 0  &2 & 0
\\
G_1& 1 & \phi_{1,0} & 1 &0& 0  &0 & 0
\\
G_1^*  &1 & \phi_{1,60}   & 0 &1& 0  &0 & 0 
\\
\hline\earr\] 
\caption{Sizes of intersections of  left cells and  conjugacy classes of 
involutions   in type $H_4$
}
\label{lcells-h4}
\end{table}

\subsection{Left cells of the Coxeter groups of type $I_2(m)$}
\label{i2cells}

Let $(W,S)$ be the Coxeter system of type $I_2(m)$ for $m\geq 3$, with $S = \{r,s\}$ and $w_0 \in W$ defined as in Section \ref{i2-sect}. The group $W$ then decomposes into a disjoint union of four left cells, given by the singleton sets $X = \{1\}$ and $X^* = \{w_0\}$ together with 
\[ 
 Y  = \{s,rs,srs,\dots, \underbrace{(\cdots srs)}_{m-1\text{ factors}}\} = R_{\{s\}}
 \qquad\text{and}\qquad
  Y^* = \{ r,sr,rsr,\dots, \underbrace{(\cdots rsr)}_{m-1\text{ factors}} \} = R_{\{r\}}
,\] 
with $R_J$  defined by (\ref{r_j}).
The following description of the left cell representations of $W$ can be found, for example, in \cite[Section 7.1]{Geck9}.

\begin{proposition}[See Geck \cite{Geck9}] 
The left cells of the Coxeter system $(W,S)$ of type $I_2(m)$ are the 4 disjoint subsets $X$, $X^*$, $Y$, $Y^*$, and the characters of the associated cell representations are respectively 
\[ \left\{ \barr{rrrrl}  \phi_{1,0},\quad &\phi_{1,m},\quad &\sum_{0<k<\frac{m}{2}} \phi_{2,k},\quad &\sum_{0<k<\frac{m}{2}} \phi_{2,k},\quad&\text{if $m$ is odd},\\
\phi_{1,0},\quad &\phi_{1,m},\quad &\phi_{1,m/2}' + \sum_{0<k<\frac{m}{2}} \phi_{2,k},\quad &\phi_{1,m/2}''+\sum_{0<k<\frac{m}{2}} \phi_{2,k},\quad&\text{if $m$ is even}.
\earr \right.\]
\end{proposition}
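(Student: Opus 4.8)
The plan relies on the standard fact that every Kazhdan--Lusztig polynomial of a dihedral Coxeter system equals $1$ (see, e.g., \cite{Bjorner}). Consequently $y\prec w$ holds exactly when $\{y,w\}$ is a Bruhat cover, and then $\mu(y,w)=1$; and since the Bruhat order on $I_2(m)$ stacks the elements by length, with every element of length $k$ below every element of length $k+1$, the covers are precisely the pairs of elements whose lengths differ by $1$. I shall also use the elementary observations that $\mathrm{Des}_L(1)=\varnothing$, that $\mathrm{Des}_L(w_0)=S$, that $\mathrm{Des}_L(w)$ (the left descent set) is the singleton consisting of the initial letter of the reduced word of $w$ for $w\notin\{1,w_0\}$, and that these initial letters alternate along $R_{\{s\}}$ and along $R_{\{r\}}$: the element of length $k$ in $R_{\{s\}}$ starts with $s$ if $k$ is odd and with $r$ if $k$ is even, and symmetrically for $R_{\{r\}}$.

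The first step is to determine the left cells. In a step $x\to x'$ allowed in the definition of $\leq_L$ one has $\ell(x')=\ell(x)\pm1$ by condition~(1), and if neither $x$ nor $x'$ is $1$ or $w_0$ then, because the left descent sets are singletons, condition~(2) forces $x'$ to start with the generator \emph{other} than the initial letter of $x$; set against the alternation pattern, this shows that such a step never passes between $R_{\{s\}}$ and $R_{\{r\}}$. One checks in addition that from the length-$1$ element of either of these sets one may step to $\{1\}$ but never out of $\{1\}$ (whose left descent set is empty), that one may step out of $\{w_0\}$ into a length-$(m-1)$ element but never into $\{w_0\}$ (since $\mathrm{Des}_L(w_0)=S$), and that consecutive elements of $R_{\{s\}}$ (resp.\ $R_{\{r\}}$) are mutually related under $\leq_L$. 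Assembling these remarks shows that the $\sim_L$-classes are exactly $X=\{1\}$, $X^*=\{w_0\}$, $Y=R_{\{s\}}$, and $Y^*=R_{\{r\}}$; their sizes $1+1+(m-1)+(m-1)=2m=|W|$ serve as a consistency check.

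The second step is to compute the cell characters. From the definition $\chi_X=\One=\phi_{1,0}$, and since $X^*=w_0X$ one gets $\chi_{X^*}=\chi_{w_0X}=\One\cdot\sgn=\phi_{1,m}$ from the identity $\chi_{w_0\Gamma}=\chi_\Gamma\cdot\sgn$. Likewise $Y^*=w_0Y$ (indeed $\mathrm{Des}_R(w_0w)=S\setminus\mathrm{Des}_R(w)$, so $w_0$ carries $R_{\{s\}}$ bijectively to $R_{\{r\}}$), whence $\chi_{Y^*}=\chi_Y\cdot\sgn$. Combining this with the decomposition $\sum_\Gamma\chi_\Gamma=\sum_{\psi\in\Irr(W)}\psi(1)\psi$ of the regular character and the list of $\Irr(W)$ from Section~\ref{i2-sect}, and using that multiplication by $\sgn$ fixes each $\phi_{2,k}$, interchanges $\One$ and $\sgn$, and (for $m$ even) interchanges $\phi_{1,m/2}'$ with $\phi_{1,m/2}''$, one finds that $\chi_Y$ contains neither $\One$ nor $\sgn$, contains each $\phi_{2,k}$ with multiplicity $1$, and (for $m$ even) contains exactly one of $\phi_{1,m/2}',\phi_{1,m/2}''$. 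This already gives $\chi_Y=\chi_{Y^*}=\sum_{0<k<m/2}\phi_{2,k}$ when $m$ is odd. When $m$ is even it remains to decide which of $\phi_{1,m/2}',\phi_{1,m/2}''$ occurs; for this I would evaluate $\chi_Y$ at the generator $r$ using the (tridiagonal) matrix of $\varrho_Y(r)$, whose diagonal entry at the basis vector indexed by the length-$k$ element of $R_{\{s\}}$ is $-1$ if that element begins with $r$ and $+1$ otherwise, obtaining $\chi_Y(r)=1$; since the $\phi_{2,k}$ vanish at $r$ while $\phi_{1,m/2}'(r)=1$ and $\phi_{1,m/2}''(r)=-1$, this forces $\chi_Y=\phi_{1,m/2}'+\sum_{0<k<m/2}\phi_{2,k}$ and hence $\chi_{Y^*}=\chi_Y\cdot\sgn=\phi_{1,m/2}''+\sum_{0<k<m/2}\phi_{2,k}$.

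The main obstacle will be the bookkeeping in the first step --- making the claim that $\leq_L$-steps never cross between $R_{\{s\}}$ and $R_{\{r\}}$ fully airtight across the boundary cases $1$, $w_0$, and the elements of length $1$ and $m-1$ --- rather than anything conceptual; the character computation is short once the regular-character decomposition and the relation $\chi_{Y^*}=\chi_Y\cdot\sgn$ are in hand, the only delicate point being the input via $\chi_Y(r)$ needed to break the symmetry between $\phi_{1,m/2}'$ and $\phi_{1,m/2}''$ when $m$ is even.
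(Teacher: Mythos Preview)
Your argument is correct and follows essentially the same route as the paper's proof: both deduce $\chi_Y$ and $\chi_{Y^*}$ from the regular-representation decomposition together with $\chi_{Y^*}=\chi_Y\cdot\sgn$, and then break the $\phi_{1,m/2}'$/$\phi_{1,m/2}''$ symmetry in the even case by evaluating at a generator. The only differences are that you work out the left cells directly from the triviality of the dihedral Kazhdan--Lusztig polynomials (the paper simply cites \cite[\S7.15]{Humphreys}), and that you read off the multiplicities straight from the identity $\chi_Y+\chi_Y\cdot\sgn=\sum_\psi\psi(1)\psi-\One-\sgn$, bypassing the paper's appeal to Corollary~\ref{lcell-cor} for multiplicity-freeness.
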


The description of the left cells is also noted in \cite[\S7.15]{Humphreys}. It is easy to compute the characters of the left cell representations directly, and we have included a short argument for completeness. 

\begin{proof}
We have $\chi_X = \phi_{1,0} = \One$ and $\chi_{X^*} = \phi_{1,m} = \sgn$ automatically, and the remaining  left cell characters are multiplicity-free by Corollary \ref{lcell-cor}. Since the left cell representations decompose the regular representation and since $\chi_{Y^*} = \chi_Y \cdot \sgn$, we must have $\chi_Y = \chi_{Y^*} = \sum_k \phi_{2,k}$ if $m$ is odd, and if $m$ is even, we must have either $\chi_Y = \phi'_{1,m/2} + \sum_k \phi_{2,k}$ and $\chi_{Y^*} = \phi''_{1,m/2} + \sum_k \phi_{2,k}$ or the reverse assignments. One resolves the ambiguity when $m$ is even by computing the values of the characters at  $r$ or $s$.\end{proof} 

Table \ref{lcells-i2} finally lists  the sizes of the intersections of the four left cells in $W$ with  the group's two or four conjugacy classes of involutions (see Section \ref{i2-sect}). We have structured this table exactly like Tables \ref{lcells-h3} and \ref{lcells-h4}, and comparing it to Table \ref{i2-tbl} completes the proof of  Theorem \ref{lcells-conj-thm}.

\begin{table}[b]
\[
\ba 
&m\text{ odd}
\\
& \barr{l|l|l|ll}\hline
\text{Left cell}\quad & \text{Cell size}\quad & \text{Cell character}\quad & 
1 & r
\\
\hline
X & 1 & \phi_{1,1} & 1 &0
\\
X^*  &1 &  \phi_{1,m}  & 0 &1
\\
Y  &m-1 &  \sum_{0<k<\frac{m}{2}} \phi_{2,k} & 0& (m-1)/2
\\
Y^* & m-1 & \sum_{0<k<\frac{m}{2}} \phi_{2,k} & 0 & (m-1)/2
\\
\hline\earr
\\
\\
&m\equiv 2 \modu 4)
\\
&\barr{l | l | l | llll}\hline
\text{Left cell}\quad & \text{Cell size}\quad & \text{Cell character}\quad & 
1 & w_0 & r & s
\\
\hline
X & 1 & \phi_{1,1} & 1 &0 & 0 & 0
\\
X^*  &1 &  \phi_{1,m}  & 0 &1 & 0 & 0
\\
Y  &m-1 & \phi'_{1,m/2} + \sum_{0<k<\frac{m}{2}} \phi_{2,k} & 0 & 0 & (m-2)/4 & (m+2)/4
\\
Y^* & m-1 &\phi_{1,m/2}''+ \sum_{0<k<\frac{m}{2}} \phi_{2,k} & 0 & 0 &  (m+2)/4 & (m-2)/4
\\
\hline\earr
\\
\\
&m \equiv 0 \modu 4)
\\
& \barr{l | l | l | llll}\hline
\text{Left cell}\quad & \text{Cell size}\quad & \text{Cell character}\quad & 
1 & w_0 & r & s
\\
\hline
X & 1 & \phi_{1,1} & 1 &0 & 0 & 0
\\
X^*  &1 &  \phi_{1,m}  & 0 &1 & 0 & 0
\\
Y  &m-1 & \phi'_{1,m/2} + \sum_{0<k<\frac{m}{2}} \phi_{2,k} & 0 & 0 & m/4 & m/4
\\
Y^* & m-1 &\phi_{1,m/2}''+ \sum_{0<k<\frac{m}{2}} \phi_{2,k} & 0 & 0 &  m/4 & m/4
\\
\hline\earr
\ea\] 
\caption{Sizes of intersections of  left cells and  conjugacy classes of 
involutions   in type $I_2(m)$
}
\label{lcells-i2}
\end{table}

\end{document}